\definecolor{myurlcolor}{rgb}{0,0,0.4}
\definecolor{mycitecolor}{rgb}{0,0.5,0}
\definecolor{myrefcolor}{rgb}{0.5,0,0}
\crefname{equation}{}{}
\crefname{pluralequation}{}{}
\newcommand{\beq}{\begin{equation}}
\newcommand{\eeq}{\end{equation}}
\newcommand{\Z}{\mathbb{Z}}
\newcommand{\N}{\mathbb{N}}
\newcommand{\R}{\mathbb{R}}
\newcommand{\op}{\mathrm{op}}
\newcommand{\eps}{\varepsilon}
\newcommand{\im}[1]{\mathrm{im}(#1)}
\renewcommand{\C}{\mathsf{C}}
\newcommand{\D}{\mathsf{D}}
\renewcommand{\det}{{\mathrm{det}}}
\newcommand{\Markov}{\mathsf{Markov}}
\newcommand{\dom}{\mathrm{dom}}
\newcommand{\condindproc}[3]{{#1} \perp {#2} \,||\, {#3}}
\newcommand{\condindstate}[3]{{#1} \perp {#2} \,|\, {#3}}
\newcommand{\condindgen}[4]{{#1} \perp {#2} \,|\, {#3} \,||\, {#4}}
\newcommand{\condindmarkov}[3]{{#1} \perp\!\!\!\!\,\perp {#2} \,|\, {#3}}
\newcommand{\as}{\textrm{-}\mathrm{a.s.}}	
\newcommand{\Set}{\mathsf{Set}}
\newcommand{\id}{\mathrm{id}}
\newcommand{\FinStoch}{\mathsf{FinStoch}}
\newcommand{\FinSetMulti}{\mathsf{FinSetMulti}}
\newcommand{\Stoch}{\mathsf{Stoch}}
\newcommand{\BorelStoch}{\mathsf{BorelStoch}}
\newcommand{\FinSet}{\mathsf{FinSet}}
\newcommand{\FinRel}{\mathsf{FinRel}}
\newcommand{\Gauss}{\mathsf{Gauss}}
\newcommand{\Kl}[1]{\mathsf{Kl}(#1)}	
\newcommand{\Meas}{\mathsf{Meas}}
\newcommand{\CHaus}{\mathsf{CHaus}}
\DeclareMathOperator{\Fun}{Fun}
\newcommand{\cop}{\mathsf{copy}}
\newcommand{\del}{\mathsf{del}}
\newcommand{\swap}{\mathsf{swap}}
\theoremstyle{plain}
\newtheorem{dummy}{}[section]
\newtheorem{thm}[dummy]{Theorem}
\newtheorem*{thm*}{Theorem}
\newtheorem{lem}[dummy]{Lemma}\Crefname{lem}{Lemma}{Lemmas}
\newtheorem{prop}[dummy]{Proposition}\Crefname{prop}{Proposition}{Propositions}
\newtheorem{cor}[dummy]{Corollary}
\newtheorem{conj}[dummy]{Conjecture}
\newtheorem{qstn}[dummy]{Question}
\newtheorem{defn}[dummy]{Definition}\Crefname{defn}{Definition}{Definitions}
\newtheorem{prob}[dummy]{Problem}
\theoremstyle{remark}
\newtheorem{ex}[dummy]{Example}\Crefname{ex}{Example}{Examples}
\newtheorem{rem}[dummy]{Remark}
\newtheorem{nota}[dummy]{Notation}
\numberwithin{equation}{section}
\setlist{itemsep=3pt}
\tikzstyle{none}=[]
\tikzstyle{morphism}=[fill=white, draw=black, shape=rectangle]
\tikzstyle{medium box}=[fill=white, draw=black, shape=rectangle, minimum width=0.8cm, minimum height=0.9cm]
\tikzstyle{large morphism}=[fill=white, draw=black, shape=rectangle, minimum width=1.7cm, minimum height=1cm]
\tikzstyle{bn}=[fill=black, draw=black, shape=circle, inner sep=1.5pt]
\tikzstyle{state}=[fill=white, draw=black, regular polygon, regular polygon sides=3, minimum width=0.8cm, shape border rotate=180, inner sep=0pt]
\tikzstyle{medium state}=[fill=white, draw=black, regular polygon, regular polygon sides=3, minimum width=1.3cm, inner sep=0pt, shape border rotate=180]
\tikzstyle{large state}=[fill=white, draw=black, regular polygon, regular polygon sides=3, minimum width=2.2cm, shape border rotate=180, inner sep=0pt]
\tikzstyle{wn}=[fill=white, draw=black, shape=circle, inner sep=1.5pt]
\tikzstyle{arrow}=[->]
\tikzstyle{dashed box}=[-, dashed]
\tikzset{baseline=(current  bounding  box.center)}
\tikzset{every picture/.append style={scale=0.5}}
\begin{document}
\sloppy

\setlength{\jot}{6pt}



\title[A synthetic approach to Markov kernels and statistics]{A synthetic approach to Markov kernels, conditional independence and theorems on sufficient statistics}

\author{Tobias Fritz}

\address{Perimeter Institute for Theoretical Physics, Waterloo, Ontario, Canada}
\email{tfritz@pitp.ca}

\keywords{Foundations of probability, foundations of statistics, categorical probability theory, monoidal categories with structure, conditional independence, almost surely, sufficient statistic, complete statistic, ancillary statistic, conditioning.}

\subjclass[2010]{Primary: \href{https://mathscinet.ams.org/mathscinet/search/mscbrowse.html?sk=60a05}{60A05}, \href{https://mathscinet.ams.org/mathscinet/search/mscbrowse.html?sk=62a01}{62A01}; Secondary: \href{https://mathscinet.ams.org/mathscinet/search/mscbrowse.html?sk=62b05}{62B05}, \href{https://mathscinet.ams.org/mathscinet/search/mscbrowse.html?sk=18d10}{18D10}, \href{https://mathscinet.ams.org/mathscinet/search/mscbrowse.html?sk=68q55}{68Q55}}

\begin{abstract}
	\vspace{.4cm}
	We develop \emph{Markov categories} as a framework for synthetic probability and statistics, following work of Golubtsov as well as Cho and Jacobs. This means that we treat the following concepts in purely abstract categorical terms: conditioning and disintegration; various versions of conditional independence and its standard properties; conditional products; almost surely; sufficient statistics; versions of theorems on sufficient statistics due to Fisher--Neyman, Basu, and Bahadur.
	
	Besides the conceptual clarity offered by our categorical setup, its main advantage is that it provides a uniform treatment of various types of probability theory, including discrete probability theory, measure-theoretic probability with general measurable spaces, Gaussian probability, stochastic processes of either of these kinds, and many others.
\end{abstract}

\newgeometry{top=2cm,bottom=2cm} 
\maketitle
\thispagestyle{empty}
\vspace{1cm}
\tableofcontents
\restoregeometry 

\newpage

\section{Introduction}

Probability theory and statistics are traditionally built on measure theory as a foundation. This has facilitated the development of a rich and diverse network of theoretical results and practical methods, comprising areas as distinct as stochastic PDEs, large deviation theory, and machine learning. In some of these areas, it is quite common to work with measure theory directly by applying its basic axioms such as $\sigma$-additivity. The position that we take in this paper is that this methodology is similar to programming a computer directly in terms of machine code. Following seminal ideas of Lawvere~\cite{lawvere}, our goal is to demonstrate that the appeal to such low-level machinery can frequently be avoided: there are simple and purely algebraic axioms for ``systems of Markov kernels'' which allow us to give high-level definitions of some of the basic concepts of probability and statistics, such as probability spaces, random variables, independence and conditional independence, almost surely, statistics and sufficient statistics, up to and including completeness and minimality of statistics. The definitions of these concepts also allow us to derive a surprising number of theorems on these notions purely abstractly: among other things, we prove abstract versions of the semigraphoid properties of conditional independence as well as of the Fisher--Neyman factorization theorem, Basu's theorem and Bahadur's theorem on sufficient statistics.

The advantages of this type of approach are various, and we now mention four points in favour of it. First, perhaps the least interesting advantage is that we achieve a novel \emph{understanding} of the above mentioned concepts. Second, a more important point is the greater \emph{generality} achieved through the development of such a framework: there are many different systems (categories\footnote{We use the words ``category'' and ``categorical'' exclusively in the sense of category theory, bearing no relation to categorical data in the sense of statistics.}) of Markov kernels which satisfy our axioms, ranging from probability theory on finite sets via Gaussian probability theory to full-fledged completely general Markov kernels between measurable spaces and even to stochastic processes, where probability theory takes place in time. Therefore our results immediately specialize to different theorems in each one of these cases. There are other interesting categories which satisfy the same axioms but have little to do with probability theory at all, and again our results apply to them; a good example of such a Markov category is the category of sets and multivalued functions, where the morphisms are not Markov kernels, but display largely analogous behaviour.

Third, a related advantage of an abstract framework such as ours is that the axioms are more \emph{high-level} than the standard measure-theoretic ones. If using the standard ones is analogous to programming a computer in machine code, then using the high-level ones is analogous to programming a computer in a language which provides higher abstraction. This enables the programmer to write larger and more complex programs whose functioning is easier to grasp for human minds. As we will see, something similar happens in our context: our proofs of theorems on sufficient statistics such as Basu's are very simple and visual. This should not be surprising, since a certain amount of proof difficulty already enters in the proofs that a given system, such as measure-theoretic probability, satisfies our axioms. Going beyond existing theorems, we expect that our abstract framework will also facilitate the development of new and more complex results in theoretical statistics. Fourth, there is a \emph{modularity} advantage: most of our results only require a fragment of the axioms to be satisfied, so that these results can be instantiated also in Markov categories where merely this fragment holds.

The word ``synthetic'' in our title refers exactly to this idea of higher abstraction, suggesting that we are dealing with an instance of the \emph{synthetic vs.~analytic} dichotomy. Roughly speaking, a mathematical formalism or theory is \emph{analytic} if it focuses on concrete and specific implementation details, often so specific that they allow the user to answer any particular question in a definite way (at least in principle). The paradigmatic example of an analytic theory is Descartes' \emph{analytic geometry} based on coordinate computations. On the other hand, a formalism or theory is \emph{synthetic} if it merely provides higher-level axioms or even just inference rules for reasoning about the structures of interest. Euclid's axioms for planar geometry is the analogous paradigmatic example here. And indeed, doing geometry in terms of Euclid's axioms has advantages over analytic geometry which are parallel to the ones outlined above\footnote{Which is not to deny that analytic geometry, or analytic theories in general, have distinct advantages over synthetic ones as well.}. In this case, the utility of these advantages has played out in a very interesting way historically, leading to the development of other analytic models of a fragment of the axioms, in the form of hyperbolic geometry and more generally Riemannian geometry.

Let us briefly discuss one more well-established analogue for the relation between our approach and the traditional measure-theoretic one. The theory of \emph{abelian categories} is an abstraction of the theory of modules over a ring bearing some conceptual similarity to how our approach abstracts from the conventional foundations of probability and statistics. Again the theory of abelian categories has similar advantages (and disadvantages) over module theory. To see how these manifest themselves, consider the sentiment that abelian categories are in some sense the ``right'' level of abstraction for the development of homological algebra: the irrelevant implementation details of what the objects of an abelian category ``really are'' have been abstracted away, while the focus is precisely on those structures that are relevant to the development of homological algebra~\cite{grothendieck}. Moreover, the greater generality is useful in that not all applications of homological algebra are to categories of modules; this happens e.g.~in algebraic geometry, where many categories of sheaves are abelian categories which are not categories of modules (not even up to equivalence)~\cite{GP}.

However, we certainly cannot claim that our formalism in its present form is as solid or as definite as the theory of abelian categories. Indeed, we do not expect it to have reached its final form, as some of the technical details in our definitions will almost surely be subject to revision. Nevertheless, we believe that the basic spirit will remain unchanged, and has the potential to become a new paradigm for the mathematical foundations of probability and statistics.

\subsection*{Summary}

We now summarize the main definitions and results of the paper, one section at a time.

\begin{itemize}[leftmargin=.7cm]
	\item \Cref{sec_comons} introduces Markov categories as our main protagonists (\Cref{markov_cat}), following work of Golubtsov as well as Cho and Jacobs. These are categories which axiomatize systems of Markov kernels. The axiomatized structures are sequential composition of Markov kernels, their tensor product, as well as distinguished Markov kernels which implement \emph{copying} and \emph{discarding} of values of random variables. These pieces of structure are subject to certain natural compatibility conditions. We discuss the significance of the axioms and introduce $\FinStoch$, the category of Markov kernels (or stochastic matrices) between finite sets (\Cref{finstoch}) as our basic running example, and $\FinSetMulti$, the category of finite sets and multivalued functions (\Cref{setmulti}).
	\item The next few sections are dedicated to examples of Markov categories. This starts with a general construction in \Cref{sec_mon_mond}, where we prove that the Kleisli category of a monoidal affine monad on a Markov category is again a Markov category (\Cref{affine_monoidal_monad}).
	\item \Cref{sec_giry} introduces our main examples besides $\FinStoch$, namely the category of measurable spaces and Markov kernels $\Stoch$ and its full subcategory of standard Borel spaces $\BorelStoch$. We recall how $\Stoch$ and $\BorelStoch$ arise via \Cref{affine_monoidal_monad} as the Kleisli category of the Giry monad.
	\item \Cref{sec_radon} discusses the Kleisli category of the Radon monad, giving a Markov category again per \Cref{affine_monoidal_monad}. It contains continuous Markov kernels between compact Hausdorff spaces.
	\item \Cref{gaussian} shows how Gaussian probability theory is also described by a Markov category denoted $\Gauss$. Here, the morphisms are given by those Markov kernels $\R^n \to \R^m$ which apply a linear transformation and add independent Gaussian noise with specified expectation and variance.
\end{itemize}
After these concrete examples, \Cref{sec_diagrams,sec_hypergraph,comons_gen} provide general categorical recipes for constructing Markov categories. A reader whose primary interest is in theoretical statistics may skip these without significant loss, and perhaps return to \Cref{sec_diagrams} on a second reading in order to see how our results automatically generalize to stochastic processes.
\begin{itemize}[leftmargin=.7cm]
	\item \Cref{sec_diagrams} explains how we can construct new Markov categories by considering diagrams of a given shape in a given Markov category. This implies in particular that all of our definitions and results can be instantiated straightforwardly on stochastic processes, since these are diagrams given by infinite sequences of deterministic morphisms.
	\item \Cref{sec_hypergraph} briefly describes the construction of Markov categories from hypergraph categories, recovering $\FinStoch$ and $\FinSetMulti$ as particular examples.
	\item \Cref{comons_gen} explains how commutative comonoids in \emph{any} symmetric monoidal category form a Markov category with respect to the counit-preserving morphisms.
\end{itemize}
Having then treated examples of Markov categories in detail, at this point we turn to the development of probability and statistics in the abstract framework of Markov categories.
\begin{itemize}[leftmargin=.7cm]
	\item \Cref{sec_det} introduces deterministic morphisms in a Markov category as those morphisms which preserve the comultiplication (\Cref{defn_det}). We show that in $\FinStoch$, $\Stoch$ and $\Gauss$, the deterministic morphisms are indeed exactly those which do not involve any randomness (\Cref{finstoch_det,stoch_det,borelstoch_det,gauss_det}). The second half of the section is devoted to certain categorical technicalities for Markov categories: we prove that the structure morphisms are necessarily deterministic (\Cref{structure_det}), and show that the subcategory of deterministic morphisms is a \emph{cartesian} monoidal subcategory (\Cref{rem_det_cartesian}). We then define the 2-category of Markov categories (\Cref{cacom}) and prove a strictification theorem (\Cref{cacom_strict}) which states that every Markov category is comonoid equivalent to another one in which the monoidal structure is strict.
	\item \Cref{furtheraxioms} introduces four possible additional axioms which a given Markov category may or may not satisfy, and which we believe to have some relevance for probability theory and statistics. The first one is the existence of \emph{conditionals} (\Cref{defn_has_conds}), which we later relate to general disintegrations (\Cref{disint}). The second one is \emph{randomness pushback}, formalizing the intuitive idea that any stochastic operation can be implemented by first generating randomness and then executing a deterministic function which uses the previously generated randomness as an additional input. Third, we discuss another property which we call \emph{positivity} (\Cref{positive_defn}), closely related to the nonnegativity of probability, which will turn out to be useful in later proofs. Fourth, we consider a condition which we call \emph{causality} (\Cref{causal_defn}), expressing the idea that if a choice between two processes does not affect how their outcome is correlated with earlier outcomes, then there should not be any correlation with even earlier outcomes either.

		For each of these axioms, we also discuss which ones of our particular Markov categories satisfy them. The existence of conditionals fails for $\Stoch$ (\Cref{stoch_cond_dist,stoch_disint}) but holds in $\FinStoch$, $\BorelStoch$ and $\Gauss$ (\Cref{finstoch_cond,gauss_cond}). Randomness pushback likewise holds in $\FinStoch$ and $\Gauss$ (\Cref{finstoch_rp,gauss_rp}), while we do not know whether it holds in $\Stoch$. All of our main examples satisfy the positivity axiom (\Cref{cond_positive,stoch_positive}) as well as the causality axiom (\Cref{cond_causal,stoch_causal}).
	\item \Cref{cind} treats conditional independence in our setting. We give various definitions of conditional independence for different kinds of morphisms (\Cref{cistatedef,ciprocdef,cigendef,def_cimarkov}), including the difference on whether one wants to condition on an input or on an output (or both). We prove corresponding versions of the semigraphoid properties (\Cref{cistateprops,ciprocprops} and \Cref{cigenprops,cimarkovprops}), and various results on how conditional independences propagate across composition of morphisms (\Cref{condind_compprop}). Our main innovation on conditional independence over previous work of Cho and Jacobs is that we do not assume the existence of conditionals.
		
		Furthermore, in the presence of conditionals, we also develop the theory of conditional products of joint distributions, showing that these can be defined and satisfy axioms originally proposed by Dawid and Studen{\'y} in any Markov category which has conditionals (\Cref{condprod_defn} and after).
	\item \Cref{sec_as} treats almost sure equality (\Cref{defn_as}), again following an idea of Cho and Jacobs. We prove a number of lemmas on the compositional structure of almost sure equality. We also construct the category of probability spaces and Markov kernels modulo almost sure equality in any Markov category satisfying causality (\Cref{probstoch}). This implies that Bayesian inversion is a symmetric monoidal dagger functor, solving an open problem posed by Clerc, Danos, Dahlqvist and Garnier (\Cref{dagger}). We then introduce the concept of almost sure determinism (\Cref{defn_as_det}) as another example of how our setup allows for a systematic relativization of probabilistic concepts with respect to almost surely. Finally, we introduce a candidate definition of \emph{support} of a morphism (\Cref{defn_support}), conjecturally recovering the usual notion of support of a probability measure (\Cref{supp_usual}).
	\item The final three sections are devoted to the development of some of the basic notions and theorems of statistics within our formalism. This theory frequently makes use of aspects of conditional independence as studied in \Cref{cind}, in line with Dawid's arguments on the central role of conditional independence in statistics.
		
		\Cref{suff} starts by introducing statistical models (\Cref{def_stat_model}) and statistics (\Cref{def_statistic}), leading up to the definition of sufficient statistic (\Cref{suff_defn}). This is followed by \Cref{thm_FN}, which seems to be an abstract close relative of the Fisher--Neyman factorization theorem. This interpretation is corroborated by showing that our result indeed induces the Fisher--Neyman factorization in the case of $\FinStoch$ (\Cref{finstoch_fn}).
	\item \Cref{sec_basu} builds on the previous section by treating the completeness of statistics. Observing that the notion of completeness is secretly a property of Markov kernels in general (\Cref{complete_rem}), we find a simple definition of completeness within our abstract formalism (\Cref{defn_complete}). We show that it specializes to bounded completeness in the case of $\Stoch$ (\Cref{stoch_complete}). After defining ancillary statistics (\Cref{defn_ancillary}), we then prove an abstract version of Basu's theorem (\Cref{thm_basu}).
		\begin{thm*}[Basu]
			Any complete sufficient statistic for a given statistical model is independent of any ancillary statistic.	
		\end{thm*}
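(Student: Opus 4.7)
The plan is to translate the textbook argument for Basu's theorem into the Markov-categorical idiom. Write $m \colon \Theta \to X$ for the statistical model, $t \colon X \to T$ for the complete sufficient statistic, and $s \colon X \to S$ for the ancillary statistic. The independence of $s$ and $t$ under $m$ unfolds to the identity
\[
(s \otimes t) \circ \cop_X \circ m \;=\; \bigl((s \circ m) \otimes (t \circ m)\bigr) \circ \cop_\Theta,
\]
which I would establish in three moves.

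First I would unpack sufficiency: it produces a morphism $k \colon T \to X$, independent of $\theta$, such that $(t \otimes \id_X) \circ \cop_X \circ m = (\id_T \otimes k) \circ \cop_T \circ t \circ m$. Post-composing with $\id_T \otimes s$ on both sides and then swapping factors (using commutativity of $\cop$) rewrites the joint of interest as
\[
(s \otimes t) \circ \cop_X \circ m \;=\; \bigl((s \circ k) \otimes \id_T\bigr) \circ \cop_T \circ t \circ m.
\]
Setting $h_S := s \circ k \colon T \to S$, the right-hand side presents the joint as: sample $T$ from $t \circ m$, copy it, apply $h_S$ to one copy and keep the other.

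Next I would invoke ancillarity. Applying $\id_S \otimes \del_T$ to both sides picks out the $S$-marginal, giving $s \circ m = h_S \circ t \circ m$. By ancillarity, $s \circ m = p_S \circ \del_\Theta$ for a fixed state $p_S \colon I \to S$, so $h_S \circ t \circ m = p_S \circ \del_\Theta$ is $\theta$-independent. Completeness of $t$ for $m$ is tailored to exactly this situation and forces $h_S \;=_{t \circ m}\; p_S \circ \del_T$.

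To conclude, I would substitute this almost-sure equality back into the formula from the first move. Simplifying via the counit law $(\del_T \otimes \id_T) \circ \cop_T = \id_T$ and the ancillarity identity $p_S \circ \del_\Theta = s \circ m$ rearranges the right-hand side into $\bigl((s \circ m) \otimes (t \circ m)\bigr) \circ \cop_\Theta$, which is the desired independence. The crux of the argument is this final substitution: one needs the almost-sure equality $h_S =_{t \circ m} p_S \circ \del_T$ to survive the operations $(-) \otimes \id_T$ and pre-composition with $\cop_T \circ t \circ m$. I expect this to follow cleanly from the composition lemmas for almost sure equality developed in \Cref{sec_as}, and I do not foresee needing further axioms such as positivity or causality.
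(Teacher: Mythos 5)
Your proof is correct and follows essentially the same route as the paper's: rewrite the joint via the sufficiency witness, marginalize and combine ancillarity with completeness to get the almost-sure equality $h_S =_{t\circ m} p_S \circ \del_T$, then substitute back. The final substitution you flag as the crux needs no extra lemma: it is literally the defining equation of almost sure equality (\Cref{defn_as}), applied after a swap of the two output wires.
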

	\item \Cref{sec_bahadur} goes further by introducing minimal sufficient statistics within our abstract setup (\Cref{min_suff}), based on a preordering on the set of statistics for a given statistical model which compares statistics by their informativeness (\Cref{stat_preorder}). We then state and prove an abstract version of Bahadur's theorem.
		\begin{thm*}[Bahadur]
			If a minimal sufficient statistic exists for a given statistical model, then a complete sufficient statistic is minimal sufficient.
		\end{thm*}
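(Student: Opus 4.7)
The plan is to carry over the classical Lehmann--Scheff\'e argument to the Markov-categorical setting. Let $p\colon \Theta \to X$ be the statistical model, $T\colon X \to \tau$ the complete sufficient statistic, and $U\colon X \to \upsilon$ a minimal sufficient statistic (which exists by hypothesis). We want to show that $T$ is itself minimal sufficient, i.e., that for every sufficient statistic $T'$ one has $T \leq T'$ in the informativeness preorder. Since $U \leq T'$ for every sufficient $T'$ by minimality of $U$, transitivity reduces the problem to the single claim $T \leq U$.

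The two inequalities between $T$ and $U$ are obtained as follows. In one direction, minimality of $U$ together with sufficiency of $T$ gives $U \leq T$, so there is a deterministic morphism $\phi\colon \tau \to \upsilon$ with $U = \phi \circ T$ a.s.\ with respect to the model. In the other direction, sufficiency of $U$ yields a $\theta$-independent Markov kernel $k\colon \upsilon \to \tau$ which is the conditional of $T$ given $U$, satisfying $k \circ U = T$ a.s. Substituting the first relation into the second produces
\[
  k \circ \phi \circ T \;=\; T \qquad \textrm{a.s.}
\]

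At this point completeness of $T$ enters, as precisely the property designed to upgrade an a.s.\ equation of the form $f \circ T = g \circ T$ to an equality $f = g$ at the level of morphisms out of $\tau$. Applying it to $f = k \circ \phi$ and $g = \id_\tau$ yields $k \circ \phi = \id_\tau$. Since $\phi$ is deterministic, this section identity forces $k$ to send $\phi(t)$ to the point mass on $t$ for every $t$ in the image of $\phi$; in particular $k$ is a.s.\ deterministic with respect to the pushforward of the model along $U$. Taking $\psi := k$ we then have $T = \psi \circ U$ a.s., i.e., $T \leq U$, and combining with $U \leq T'$ for arbitrary sufficient $T'$ proves that $T$ is minimal sufficient.

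The main obstacle is the completeness step. Extracting $k \circ \phi = \id_\tau$ from $k \circ \phi \circ T = T$ depends on the precise categorical formulation of completeness introduced in the Basu section (which specializes to bounded completeness in $\Stoch$), and on its compatibility with the a.s.\ equality machinery developed earlier. A closely related subtlety is that the kernel $k$ produced by sufficiency is a priori stochastic, so promoting it to an a.s.\ deterministic morphism via the retraction identity $k \circ \phi = \id$ relies on the section devoted to a.s.\ determinism. Once these alignments are in place, the assembly into Bahadur's theorem is conceptually transparent, consisting essentially of the chain $T \leq U \leq T'$ in the informativeness preorder.
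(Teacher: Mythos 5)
Your skeleton matches the paper's (and the classical Lehmann--Scheff\'e-style) argument: reduce minimality of $T$ to the single claim $T \leq U$ via transitivity, obtain $\phi$ with $U =_{p\as} \phi T$ from minimality of $U$, and use completeness to show that the round trip through $U$ is almost surely the identity. But there is a genuine gap at the central step. Sufficiency of $U$ does \emph{not} give a kernel $k$ with $k U =_{p\as} T$ in the sense of \Cref{defn_as}: that statement is precisely $T \leq U$, i.e.\ the conclusion you are trying to prove, and it fails for a generic conditional (a conditional of $T$ given $U$ reproduces the \emph{joint law} of $(U,T)$, not the pointwise value of $T$). What sufficiency of $U$ actually yields --- via its witness $\alpha_U : \upsilon \to X$, setting $k := T\alpha_U$ --- is only the equality of composites $kUp = Tp$. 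From this and $U =_{p\as}\phi T$ one gets $k\phi\, Tp = Tp$, and completeness of $Tp$ then gives $k\phi =_{Tp\as}\id_\tau$: an almost-sure equality, and almost sure with respect to $Tp$ on $\tau$, not with respect to $p$ on $X$. The remaining, genuinely nontrivial step --- which your write-up skips --- is to upgrade $k\phi =_{Tp\as}\id_\tau$ to $k\phi T =_{p\as} T$. Almost-sure equality is stable under post-composition (\Cref{as_compose}) but not under pre-composition, so this does not follow formally; the paper closes exactly this gap with the final string-diagram computation, which invokes the sufficiency of $s$ (your $T$) a second time. As written, your argument is circular: the conclusion $T =_{p\as} kU$ is the property you postulated for $k$ at the outset, and the completeness step in between does no work towards it.

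Two smaller points. First, the hypothesis that $\C$ is strictly positive enters through \Cref{thm_FN} (the passage between sufficiency witnesses and the splitting conditions $\alpha s p = p$, $s\alpha =_{sp\as}\id$) and should be acknowledged. Second, completeness never yields on-the-nose equalities, so ``$k\circ\phi = \id_\tau$'' must be read as $k\phi =_{Tp\as}\id_\tau$; and the subsequent pointwise reasoning about $k$ sending $\phi(t)$ to the point mass on $t$ has no meaning in a general Markov category --- the correct abstract statement that a retraction of an (almost surely) deterministic morphism is almost surely deterministic is \Cref{retract_asdet} together with \Cref{gfp_det}, and it requires positivity.
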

\end{itemize}
This concludes our summary. Let us reemphasize that all of our definitions should be regarded as subject to further refinement. Of course, our results based on these preliminary definitions are nevertheless definite.

\subsection*{Discussion of prior work} The most important precursors to this paper are works by Golubtsov~\cite{golubtsov1,golubtsov2,golubtsov3,GM}\footnote{Due to the substantial overlap between those papers, we generally only refer to~\cite{golubtsov2} from now on, which is easily digitally accessible. The references in those papers also point to earlier work of Golubtsov on particular categories of interest, where some of the relevant concepts have been studied concretely.} as well as a recent paper by Cho and Jacobs~\cite{cho_jacobs}. The present paper can be regarded as a sort of follow-up to these, and we will make frequent reference to them. In particular, these prior authors already demonstrated that Markov categories\footnote{Or \emph{affine CD-categories} in the terminology of Cho and Jacobs. The \emph{categories of information transformers} or \emph{IT-categories} in the sense of Golubtsov are technically slightly different.} can serve as a canvas for developing aspects of probability theory. Markov categories had also been used somewhat implicitly in Fong's study of the categorical semantics of Bayesian networks~\cite{causaltheories}. 

However, synthetic approaches to probability and statistics in general have a long history going back further than these works, and have been pursued mostly independently in several scientific communities. We here provide a brief overview of the main literature that we are currently aware of.\smallskip

\begin{itemize}[leftmargin=.5cm]
	\item In category theory, categorical probability has been initiated by the work of Lawvere~\cite{lawvere} and Giry~\cite{giry}. It has traditionally focused on the study of \emph{probability monads}~\cite[Chapter~1]{perrone_thesis}, providing formal descriptions of the measure-theoretic and functional-analytic aspects of probability~\cite{avery,lucyshyn_wright}. This is closely related to the present paper insofar as Kleisli categories of probability monads are categories of Markov kernels, and should therefore generally be expected to be Markov categories in the sense of \Cref{markov_cat} (see also \Cref{affine_monoidal_monad}).
	\item Computer scientists have studied structural aspects of probability theory extensively, often with applications to the semantics of probabilistic programs in mind. The seminal paper of Cho and Jacobs~\cite{cho_jacobs} belongs to this area, as well as other work\footnote{The references listed are but a sample of these authors' extensive works on the subject.} by Jacobs~\cite{effectus}, Panangaden~\cite{BMPP}, Simpson~\cite{simpson_sheaves}, Staton~\cite{HKSY}, and others. Due to the affinity of theoretical computer scientists with category theory, there is a strong overlap here with work done by category theorists. This subject seems to be currently maturing, as witnessed by the first book-length treatment on structured probabilistic reasoning becoming available~\cite{jacobs_book}.
	\item In mathematical statistics, there is a large number of papers on its foundations, some of which have investigated approaches which are synthetic in flavour, and some of which use categorical concepts either explicitly or implicitly. For example, this includes works by Dawid on statistical operations~\cite{dawid_operations}, conditional independence~\cite{dawid,dawid2} and conditional products~\cite{DS}, by Lauritzen on combining statistical models~\cite{ML} and composing Gaussian conditionals~\cite{LJ}, as well as by McCullagh on the concept of statistic~\cite{mccullagh}.
	\item In noncommutative probability theory, synthetic notions of independence which generalize various concrete notions of independence have been studied using categorical methods, such as free independence in free probability, for example in the works of Franz~\cite{franz} as well as Gerhold, Lachs and Sch\"urmann~\cite{GLS}.
	\item In the foundations of quantum mechanics community, researchers have attempted to construct quantum analogues of Bayesian inference and Bayesian networks. This endeavour is crucially informed by having a synthetic approach to the classical case to begin with. For example, the work of Coecke and Spekkens~\cite{CS} belongs here.
	\item In information geometry, \v{C}encov had investigated the category $\Stoch$ as the ``category of statistical decisions'' independently of Lawvere and Giry~\cite{cencov,cencov_book}, where it was used subsequently as a canvas for studying entropic distances on probability measures~\cite{cencov_later,MC}.
\end{itemize}

We have tried to integrate some discussion of what has been done in each of these areas, to the best of our knowledge, into the main text. Nevertheless, it is almost inevitable that some readers, in particular those coming from one of these particular fields, will feel that their own area remains underrepresented in our presentation. For these readers, we hope that the above list can provide some convenient entry points into the existing literature of other communities which they may not be aware of.

\subsection*{Conventions and notation.} Throughout, we use the term ``Markov kernel'' in the standard sense, referring to a map between measurable spaces which takes every element of the first space to a \emph{probability measure} on the second space, such that a suitable measurability requirement holds; see e.g.~\cite[Definition~8.25]{klenke} or~\cite[D\'efinition~III-2.1]{neveu}. In other words, ``Markov kernel'' is more or less synonymous with other terms like ``transition kernel'', ``stochastic relation'', ``stochastic map'', ``probabilistic map'', ``conditional distribution'', ``channel'', ``statistical operation'', ``information transformer'' and others (neither one of which we will use). We present a detailed technical treatment of Markov kernels in \Cref{sec_giry}. However, precisely because our categorical formalism is more general and more abstract than the measure-theoretic one, most of our uses of the term ``Markov kernel'' will not require the reader to know the technical details. It is perfectly enough to associate the term ``Markov kernel'' merely with ``noisy map'', i.e.~a function whose output may be inherently random. The morphisms in those Markov categories which are of interest in probability and statistics are always some form of Markov kernels or noisy maps, even if the technical details may differ. We also think of a morphism in a general Markov category as an abstract version of a Markov kernel.

Our categorical setup is as follows. Throughout, $\C$ is a symmetric monoidal category with monoidal unit $I \in \C$ and symmetry morphisms $\swap_{X,Y} : X \otimes Y \to Y \otimes X$. We freely use \emph{string diagrams} as the usual graphical calculus for symmetric monoidal categories~\cite{selinger}, as these are more intuitive and easier to parse for a human reader than long equations. Our convention is that these string diagrams are to be read from bottom to top: the domain of a composite morphism defined by a string diagram is at the bottom while the codomain is on top. Hence our string diagrams can be interpreted as processes with time flowing upwards. For example,~\cref{comonoid_ass} should be interpreted as stating that creating three copies of an object can be achieved either by first copying and then copying the second copy again, or first copying and then copying the first copy again. The symmetry morphisms $\swap_{X,Y}$ are depicted graphically simply as crossings,
\[
	\input{swap.tikz}
\]
While we usually leave out the object labels on the wires of our string diagrams, we occasionally decorate the overall input and output wires by the corresponding object labels for clarity and emphasis.

While the possibility of using string diagrams is very convenient, we do not regard it as a central selling point of our framework. Rather, our main innovation lies in the greater generality and conceptual clarity facilitated by a categorical treatment, independently of which notation one uses for categorical reasoning.

\subsection*{Acknowledgments} We thank J\"urgen Jost and H{\^o}ng V{\^a}n L{\^e} for the discussions which spurred this paper; Manuel B\"arenz, Kenta Cho, Toma\v{s} Gonda, Bart Jacobs, Dominique Pastor, Paolo Perrone, Eigil Fjeldgren Rischel, Alex Simpson and David Spivak for useful discussions and detailed feedback on a draft; Brendan Fong, TC Fraser, Richard Garner, Paul Marriott, Arthur Parzygnat, Rob Spekkens, Sam Staton and Dario Stein for further helpful discussions; Sam Tenka for pointing out Basu's theorem and Bahadur's theorem to us; as well as Bob Coecke, Philip Dawid, Robert Furber, Peter Golubtsov, Alexander Kurz, Steffen Lauritzen, Nina Otter and MathOverflow user Steve for additional valuable pointers to the literature. Last but not least, we thank the creators of and contributors to the \href{https://ncatlab.org/nlab/show/HomePage}{nLab} and \href{https://tikzit.github.io/}{TikZiT} for having made research and dissemination in category theory so much easier.

Part of this work was conducted while the author was with the Max Planck Institute for Mathematics in the Sciences.

\section{Markov categories}
\label{sec_comons}

Here, we introduce our main objects of study: \emph{Markov categories}, which are symmetric monoidal categories with extra structure which makes them behave (in many ways) like categories of Markov kernels. As far as we know, these kinds of categories have first appeared in slightly different form\footnote{See \Cref{freyd} for a discussion of the difference.} in work of Golubtsov~\cite[Sections~2 and~3]{golubtsov2}. In the present form they first played a major role in Fong's work on the categorical structure of Bayesian networks~\cite{causaltheories}, even if somewhat implicitly. They crop up similarly implicitly in later work by Jacobs and Zanasi~\cite{JZ}, and have subsequently been axiomatized explicitly by Cho and Jacobs in their categorical approach to conditional independence and Bayesian inference~\cite{cho_jacobs}. Cho and Jacobs call them \emph{affine CD-categories}, where ``CD'' stands for ``Copy/Discard'', describing the interpretation of the structure morphisms~\cref{cd}. Due to the central role that these categories seem to play in probability and statistics, we introduce a catchier term which hints at the idea that the morphisms in the categories under consideration behave like Markov kernels.

\begin{defn}
	\label{markov_cat}
	A \emph{Markov category} $\C$ is a symmetric monoidal category in which every object $X \in \C$ is equipped with a commutative comonoid structure given by a comultiplication $\cop_X : X \to X \otimes X$ and a counit $\del_X : X \to I$, depicted in string diagrams as
	\beq
		\label{cd}
		\input{comon_structures.tikz}
	\eeq
	and satisfying the commutative comonoid equations,
	\beq
		\label{comonoid_ass}
		\input{comonoid1.tikz}
	\eeq
	\beq
		\label{comonoid_other}
		\input{comonoid2.tikz}
	\eeq
	as well as compatibility with the monoidal structure,
	\begin{equation}
		\label{delcopyAB}
		\input{multiplicativity.tikz}
	\end{equation}
	and naturality of $\del$, which means that
	\begin{equation}
		\label{counit_nat}
		\input{counit_natural.tikz}
	\end{equation}
	for every morphism $f$.
\end{defn}

We usually think of a morphism in $\C$ as some form of Markov kernel, meaning that it assigns to every input value a probability distribution over output values. The comultiplication $\cop_X$ then represents the map which takes an input value, copies it, and outputs the two copies without introducing any randomness; while $\del_X$ is the map which simply discards its input and produces no output. The above equations then all acquire an intuitive meaning, such as that copying and discarding one copy is equal to simply outputting the input, a property sometimes known as \emph{broadcasting}~\cite{broadcast}.

The Markov kernel interpretation, including the role of $\cop$ and $\del$, will be made precise in \Cref{finstoch} and \Cref{sec_giry,sec_radon} by constructing actual categories of Markov kernels which are Markov categories. The main purpose of our paper is to demonstrate that these pieces of structure are enough to develop a substantial amount of probability theory and statistics.

The first concept from probability theory which acquires meaning in any Markov category is that of a probability measure, or \emph{distribution}; we prefer the latter term due to its more open-ended connotation. A distribution is a Markov kernel which takes no input and produces an output, interpreted as a ``random element'' of the codomain. This motivates the definition that a \emph{distribution} in any Markov category $\C$ is a morphism $\psi : I \to X$ from the unit object $I$ to some other object $X$, depicted in string diagram notation as
\[
	\begin{tikzpicture}
	\begin{pgfonlayer}{nodelayer}
		\node [style=state] (0) at (0, 0) {$\psi$};
		\node [style=none] (1) at (0, 1) {};
		\node [style=none] (2) at (0, 1.5) {$X$};
	\end{pgfonlayer}
	\begin{pgfonlayer}{edgelayer}
		\draw (1.center) to (0);
	\end{pgfonlayer}
\end{tikzpicture}
	
\]
In the Markov category $\Stoch$, our example from~\Cref{sec_giry}, these are precisely the probability measures on measurable spaces $X$. We can now say that the pair $(X,\psi)$ is a \emph{probability space} in $\C$; in $\Stoch$, this indeed specializes precisely to the traditional notion of probability space. A \emph{random variable} on $(X,\psi)$ taking values in some other object $Y$ can then be defined to be a morphism $f : X \to Y$, where one may want to impose the additional requirement that $f$ must be deterministic (\Cref{defn_det}), and perhaps identify two such morphisms as representing the same random variable as soon as they are $\psi$-almost surely equal (\Cref{defn_as}).\footnote{Since we will have no need for the term ``random variable'' in this paper, we do not commit ourselves to a particular definition of this concept yet, and mention it merely to illustrate how to work with Markov categories. But see \Cref{prob_cat} for further treatment of probability spaces in Markov categories.} The distribution of the random variable $f$ is then given by the composition $f \psi : I \to Y$,
\[
	\begin{tikzpicture}
	\begin{pgfonlayer}{nodelayer}
		\node [style=state] (0) at (0, -1.5) {$\psi$};
		\node [style=morphism] (1) at (0, 0) {$f$};
		\node [style=none] (2) at (0, 1) {};
		\node [style=none] (3) at (0, 1.5) {$Y$};
	\end{pgfonlayer}
	\begin{pgfonlayer}{edgelayer}
		\draw (2.center) to (1);
		\draw (1) to (0);
	\end{pgfonlayer}
\end{tikzpicture}

\]
so that the pair $(Y,f\psi)$ is also a probability space in $\C$. This string diagram can be interpreted conveniently in terms of processes happening in time: first, $\psi$ generates a random element of $X$; second, this element is fed into the function $f$, which subsequently outputs an element of $Y$. The overall effect is that we have produced a random element of $Y$, corresponding to the distribution of our random variable. One takeaway of this is that the formalism of Markov categories unifies the concepts of probability distributions and random variables by considering both of them as instances of the more general concept of Markov kernel.

For objects $X,Y \in \C$, morphisms $\psi : I \to X \otimes Y$ correspond to \emph{joint distributions}. In the case of $\Stoch$, these are exactly the probability measures on the product $\sigma$-algebra on the product of measurable spaces $X$ and $Y$. In general, such a joint distribution can be \emph{marginalized} over $Y$ by composing with $\id_X \otimes \del_Y$. In terms of string diagrams, the resulting composite morphism $(\id_X \otimes \del_Y) \circ \psi$ can be conveniently depicted as
\[
	\input{marginal.tikz}
\]
and similarly for marginalization over $Y$. Analogous statements apply to joint distributions on more than two factors. Of course, these interpretations are also central to the work of Cho and Jacobs~\cite{cho_jacobs}, and have also been used prior to that e.g.~in the work of Coecke and Spekkens~\cite{CS}.

\begin{rem}
	Written out in a way which explicitly keeps track of the structure isomorphisms in $\C$, the first condition in \cref{delcopyAB} amounts to the commutativity of the diagram
	\beq
		\label{deldiag}
		\begin{tikzcd}
			X \otimes Y \ar["\del_{X\otimes Y}"]{rr} \ar[equal]{d} & & I \ar["\cong"]{d} \\
			X \otimes Y \ar["\del_X \otimes \id"]{r} & I \otimes Y \ar["\id \otimes \del_Y"]{r} & I \otimes I
		\end{tikzcd}
	\eeq
	and similarly the second one is
	\beq
		\label{copydiag}
		\begin{tikzcd}[column sep=1.5cm, row sep=1cm]
			X \otimes Y \ar["\cop_{X \otimes Y}"]{rr} \ar[equal]{ddd} & & (X \otimes Y) \otimes (X \otimes Y) \ar["\cong"]{d} \\
			& & (X \otimes (Y \otimes X)) \otimes Y \ar["(\id \otimes \swap) \otimes \id"]{d} \\
			& & (X \otimes (X \otimes Y)) \otimes Y \ar["\cong"]{d} \\
			X \otimes Y \ar["\cop_X \otimes \id"]{r} & (X \otimes X) \otimes Y \ar["\id \otimes \cop_Y"]{r} & (X \otimes X) \otimes (Y \otimes Y)
		\end{tikzcd}
	\eeq
	where the unnamed isomorphisms are the monoidal unitors and composite associators.
\end{rem}

\begin{rem}
One may also think that we should impose $\del_I = \cop_I = \id_I$, where $I$ is the monoidal unit. These constraints are automatic if $\C$ is strict, because then we have
\[
	\del_I \otimes \del_I = \del_I, \qquad \cop_I \otimes \cop_I = \cop_I,
\]
by assumption, as well as
\[
	\del_I \otimes \cop_I = \del_I \circ \cop_I = \id_I
\]
in the commutative endomorphism monoid $\C(I,I)$. These properties imply
\[
	\del_I = \del_I \otimes \del_I \otimes \cop_I = \del_I \otimes \cop_I = \id_I,
\]
and hence also $\cop_I = \id_I$.

If $\C$ is not strict, then the equation $\cop_I = \id_I$ is not actually well-typed. Rather, the correct statement is that $\del_I = \id_I$ and that $\cop_I : I \to I \otimes I$ is equal to the unique coherence isomorphism. The proof is similar.
\end{rem}

By $\del_I = \id_I$ and~\cref{counit_nat}, we can conclude that $I$ is a terminal object. Put differently, our postulated properties of $\del$ can equivalently be phrased as saying that $\C$ should be a \emph{semicartesian} symmetric monoidal category, for which there are several equivalent characterizations (see~\cite{semicartesian} and~\cite[Theorem~3.5]{GLS}), with the simplest one being that $I$ should be terminal\footnote{This condition has also been considered in~\cite{cho_jacobs,coecke,KHC}. However, their claimed equivalence with~\cref{counit_nat} is missing the assumption $\del_I = \id_I$.}~\cite[Definition~2.1(ii)]{jacobs_weakening}, another one being the notion of \emph{monoidal category with projections} as studied by Franz with a minor variation~\cite[Definition~3.3]{franz}\footnote{The minor variation is that in order for Franz's definition to be equivalent to semicartesianness, one needs to amend it by the extra condition that at least one of the two projections $\pi_1, \pi_2 : I \otimes I \to I$ must coincide with the structure isomorphism $I \otimes I \stackrel{\cong}{\to} I$, as per~\cite{leinster_comment} and~\cite[Theorem~3.5]{GLS}. If $\pi_1$ does, then any $f : X \to I$ is equal to the composite $X \stackrel{\cong}{\to} I \otimes X \stackrel{\pi_1}{\to} I$ due to the diagram
\[
	\begin{tikzcd}[ampersand replacement=\&]
		X \ar["f"]{d} \ar["\cong"]{r} \& I \otimes X \ar["\id \otimes f"]{d} \ar["\pi_1"]{r} \& I \ar[equal]{d} \\
		I \ar["\cong"]{r} \& I \otimes I \ar["\cong"]{r} \& I 
	\end{tikzcd}
\]
where the right-hand square is a naturality square of $\pi_1$. In other words, $I$ is terminal. Conversely, it is easy to see that if $I$ is terminal, then $\C$ is a monoidal category with projections satisfying the extra condition.}. In particular, the first equation in~\cref{delcopyAB} can also be seen as a consequence of the terminality of $I$.

\begin{rem}
	It is well-known that if $\C$ is cartesian monoidal, then there is a unique comonoid structure on every object. This makes $\C$ into a Markov category. However, the interesting Markov categories are those not of this form.
\end{rem}

While we will consider a substantial range of particular Markov categories in \Cref{sec_mon_mond,sec_giry,sec_radon,gaussian,sec_diagrams,sec_hypergraph,comons_gen}, we now introduce two basic examples, which we start with due to the technical simplicity of working with finite sets. We will use the first one throughout as our primary running example.

\begin{ex}
	\label{finstoch}
	$\C$ may be $\FinStoch$, the category of finite sets with Markov kernels. This means that a morphism $f:X\to Y$ between finite sets\footnote{The restriction to finite sets is not essential and can easily be lifted. The most general Markov category relevant for discrete probability would be the one where the objects are sets (of arbitrary cardinality) and a morphism $f : X \to Y$ is a matrix $(f_{xy})_{x\in X, y \in Y}$ with nonnegative real entries such that $\sum_{y \in Y} f_{xy} = 1$ for every $x \in X$. In particular, for every $x$ there are at most countably many $y$ with $f_{xy} > 0$. We prefer to use $\FinStoch$ as our version of discrete probability because of its simplicity.} $X$ and $Y$ is a stochastic matrix $(f_{xy})_{x\in X,\, y\in Y}$\footnote{Recall that this means that $f_{xy} \geq 0$ and $\sum_y f_{xy} = 1$. While the primary case of interest is in matrices with nonnegative real entries, one can in principle work with nonnegative entries in any ordered field, and all results proven in this paper still hold just the same with the same proofs.}, where the entry $f_{xy}$ is interpreted as the probability that the Markov kernel $f$ returns output $y$ on input $x$. Composition of morphisms is given by matrix multiplication. In the following, we also use the suggestive notation $f(y|x)$ for such a morphism $f : X \to Y$, so that composition of $f$ with some $g : Y \to Z$ turns into the Chapman--Kolmogorov equation,
	\beq
		\label{ck}
		(gf)(z|x) = \sum_y g(z|y) f(y|x).	
	\eeq
	There is a simple inclusion functor $\FinSet \to \FinStoch$, which takes an honest function between finite sets $f : X \to Y$ and turns it into a stochastic matrix: writing $f$ for both by abuse of notation, we have
	\[
		f(y|x) := \begin{cases} 1 & \textrm{ if } y = f(x), \\ 0 & \textrm{ otherwise.}	\end{cases}
	\]
	The symmetric monoidal structure on $\FinStoch$ is given by cartesian product at the level of objects, and the tensor product of stochastic matrices at the level of morphisms: for $f : A \to X$ and $g : B \to Y$,
	\beq
		\label{fg}
		(f \otimes g)(xy|ab) := f(x|a) \, g(y|b),
	\eeq
	with the obvious structure morphisms, inherited from the symmetric monoidal inclusion functor $\FinSet \to \FinStoch$ based on the cartesian monoidal structure on $\FinSet$. The bifunctoriality of this tensor product is easy to verify.

	Finally, the comonoid operations are also inherited from $\FinSet$, which is a Markov category by virtue of being cartesian monoidal. More concretely, $\del_X : X \to I$ is the all-ones row vector indexed by the elements of $X$, which we formally write in the above probability notation as
	\[
		\del_X(|x) = 1,
	\]
	where the first argument is empty since $\del_X$ is a morphism with trivial output. $\cop_X$ is the stochastic matrix with entries
	\[
		\cop_X(x_1,x_2|x_0) := \begin{cases} 1 & \textrm{if } x_1 = x_2 = x_0,\\
							0 & \textrm{otherwise}. \end{cases}
	\]
	While it is routine to verify the relevant equations from \Cref{markov_cat} directly, a more elegant and abstract argument is to use the fact that these equations are automatic in $\FinSet$, where the monoidal structure is cartesian, and then transfer to $\FinStoch$ by the functoriality and symmetric monoidality of the inclusion $\FinSet \to \FinStoch$. This works for all conditions except~\cref{counit_nat}, which is obvious.

	It may be instructive to see explicitly why the monoidal structure is not cartesian. A morphism $\psi : I \to X \otimes Y$ is a joint distribution, whose composition with the two projections $X \otimes Y \to X$ and $X \otimes Y \to Y$ gives the two marginal distributions $I \to X$ and $I \to Y$. Now if $\FinStoch$ was cartesian, the joints $I \to X \otimes Y$ would have to be in bijection with the pairs of marginals $I \to X$ and $I \to Y$. Thus the fact that a joint distribution is usually not uniquely specified by its marginals implies that $\FinStoch$ is not cartesian. More generally, this is the reason for why cartesian monoidal categories are not interesting as Markov categories.
\end{ex}

\begin{ex}[{See also~\cite{golubtsov_multi} and~\cite[Section~8.4]{golubtsov2}}]
	\label{setmulti}
	$\C$ may be $\FinSetMulti$, the category of finite sets with multivalued functions, defined in more detail as follows. A morphism $f : X\to Y$ between sets $X$ and $Y$ is a function from $X$ to the set of nonempty subsets of $Y$, or equivalently a matrix $(f(y|x))_{x\in X, y \in Y}$ with entries in $\{0,1\}$ such that for every $x \in X$ there is $y \in Y$ with $f(y|x) = 1$, where again we use notation suggestive of conditional probabilities as in \Cref{finstoch}. We think of the statement $f(y|x) = 1$ as saying that the output $y$ \emph{is possible} given the input $x$, without specifying a further quantification of this possibility, while $f(y|x) = 0$ states that the output $y$ is \emph{impossible} on input $x$. The extra condition is then that for every input, at least one output is possible. It is in this sense that we are dealing with multivalued functions.
	
	The definition of composition is then just the usual composition of relations: for $f : X \to Y$ and $g : Y \to Z$, we put $(gf)(z|x) := 1$ if and only if there is $y \in Y$ such that $g(z|y) = 1$ and $f(y|x) = 1$, meaning that an output $z$ is possible for $gf$ if and only if an output $y$ is possible for $f$ such that the output $z$ is possible for $g$ on input $y$. Using the convention $1 + 1 := 1$, this composition operation again takes the form of the Chapman--Kolmogorov equation~\cref{ck}. The tensor product of morphisms is also as in~\cref{fg}, making an output $xy$ possible on input $ab$ if $x$ is possible for $f$ on $a$ and $y$ is possible for $g$ on $b$. The comonoid morphisms $\del$ and $\cop$ take the exact same form as in $\FinStoch$.

	Although $\FinSetMulti$ looks like an interesting example for the concepts and results developed in the rest of this paper, we will leave its more detailed investigation in our context to future work.
\end{ex}

\begin{rem}
	Given morphisms $f : X \to Y$ and $g : X \to Z$ in a Markov category, we can form a morphism $X \to Y \otimes Z$ given by
	\beq
		\label{cond_prod_proc}
		\input{conditional_product.tikz}	
	\eeq
	It is not hard to see that this equips the coslice category $X/\C$ with a monoidal structure, where the monoidal unit is given by $\del_X$. This hints at a possible relation between Markov categories and monoidal fibrations~\cite{shulman} by virtue of equipping the domain fibration $\dom : \C^\to \to \C$ with a monoidal structure, and possibly hints at an equivalent but more abstract definition of Markov categories. For a more ``pedestrian'' definition of a categorical structure very similar to Markov categories based on this operation, see~\cite[Definition~1]{golubtsov2}.

	In categories of Markov kernels like $\FinStoch$, the monoidal structure on the coslices $X/\C$ is closely related to \emph{conditional products} of probability distributions, which we will investigate in~\Cref{condprod_defn}.
\end{rem}

\begin{nota}
	\label{formal_notation}
	Although we will do this only with explicit mention, it can sometimes be convenient to use notation like ``$f(x,y|a,b)$'' to indicate codomain and codomain of a morphism $f : A \otimes B \to X \otimes Y$ in a \emph{general} Markov category. This generalizes the usual notation for $\FinStoch$ from \Cref{finstoch}. We can then use the Chapman--Kolmogorov equation in the form~\cref{ck} as a general \emph{formal} notation for composition of morphisms, irrespectively of whether such composition involves any sort of summation in the category under consideration. We can similarly use~\cref{fg} as a general formal notation for the tensor product of morphisms.

	Given a morphism $f : A \to X \otimes Y$, composing with $\del_X \otimes \id : X \otimes Y \to Y$ implements \emph{marginalization} over the first variable. Just as we can denote the original morphism by $f(x,y|a)$, we can simply write $f(y|a)$ for this marginal, and likewise for the similarly defined other marginal $f(x|a)$. In line with our formal probability notation, the defining equation of marginalization
	\[
		f(y|a) = \sum_x f(x,y|a)
	\]
	thereby acquires a completely formal meaning valid in any Markov category, where the sum over $x$ corresponds to the composition in $(\del_X \otimes \id) \circ f$. The comonoid maps $\del_X$ and $\cop_X$ do not appear explicitly in this notation: $\del_X$ is denoted simply by the empty symbol, indicating no dependence on $x$; while using $\cop_X$ amounts to using the same variable $x$ in an expression twice. For example, the composition of the tensor product of $f : X \to Y$ and $g : X \to Z$ with $\cop_X$ as in~\cref{cond_prod_proc} will be the morphism denoted simply by $f(y|x) \, g(z|x)$.

	Despite the apparent utility and generality of this notation, we will leave a complete formalization of it to future work, and will use it in this paper only occasionally and with explicit mention. It may be interesting to note that it is similar to the \emph{abstract index notation} used in differential geometry and general relativity~\cite{wald}.
\end{nota}

\begin{nota}
	As is commonly done with monoid or comonoid structures in the context of string diagrams, we also draw a bullet with three wires coming out
	\[
		\input{triple_copy.tikz}
	\]
	as a shorthand for either side of the associativity equation~\cref{comonoid_ass}, to be interpreted as copying with three outputs. And similarly for more than three outgoing wires, which is easily seen to be unambiguous by repeated application of associativity.
\end{nota}

\section{Example: Kleisli categories of monoidal monads}
\label{sec_mon_mond}

In this section, we give a general construction of Markov categories as Kleisli categories of symmetric monoidal affine monads. Subsequently, we will sketch the application of this construction to the Giry monad and to the Radon monad. This implies that all results from our later sections can be instantiated beyond the discrete case implemented by $\FinStoch$, namely in particular in the category of measurable spaces and measurable Markov kernels (\Cref{sec_giry}) and in the category of compact Hausdorff spaces and continuous Markov kernels (\Cref{sec_radon}). We expect that an analogous straightforward development will be possible also in other cases, such as for the monad of Scott-continuous probability valuations on the category of (all) topological spaces~\cite[Section~10]{heckmann} (see also~\cite{GLJ,FPR}).

To begin the general construction, let $\D$ be a symmetric monoidal category. In the examples of interest, $\D$ will typically be cartesian monoidal, but we do not need this assumption. Further, let $T : \D \to \D$ be a monad with multiplication $\mu : TT \to T$ and unit $\eta : 1 \to T$. It is a \emph{symmetric monoidal monad} (e.g.~\cite{guitart,seal}) if it comes equipped with morphisms
\[
	\nabla_{X,Y} : TX \otimes TY \longrightarrow T(X\otimes Y)
\]
natural in $X$ and $Y$, which make $T$ into a lax symmetric monoidal functor with unit $\eta_I : I \to TI$, and such that $\mu$ and $\eta$ are monoidal transformations. Concretely, the latter means that the diagrams
\beq
	\label{monmon1}
	\begin{tikzcd}
		TTX \otimes TTY \ar["\nabla"]{r} \ar["\mu\otimes\mu"]{d} & T(TX \otimes TY) \ar["T\nabla"]{r} & TT(X \otimes Y) \ar["\mu"]{d} \\
		TX \otimes TY \ar["\nabla"]{rr} & & T(X \otimes Y)
	\end{tikzcd}
\eeq
\beq
	\label{monmon2}
	\begin{tikzcd}
		& X \otimes Y \ar[swap,"\eta \otimes \eta"]{dl} \ar["\eta"]{dr} \\
		TX \otimes TY \ar["\nabla"]{rr} & & T(X \otimes Y)
	\end{tikzcd}
\eeq
must commute for all $X,Y \in \D$. We then have the following standard result~\cite[Corollaire~7]{guitart},~\cite[Proposition~1.2.2]{seal}.

\begin{prop}
	Under these assumptions, the Kleisli category $\Kl{T}$ becomes symmetric monoidal, with:
	\begin{itemize}
		\item the tensor product of objects being the one of $\D$,
		\item the tensor product of morphisms represented by $f : A \to TX$ and $g : B \to TY$ being represented by the composite
		\[
			\begin{tikzcd}
				A \otimes B \ar["f\otimes g"]{r} & TX \otimes TY \ar["\nabla"]{r} & T(X \otimes Y).
			\end{tikzcd}
		\]
	\end{itemize}
	Moreover, the standard inclusion functor $\D \to \Kl{T}$ is strict symmetric monoidal.
\end{prop}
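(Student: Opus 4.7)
The plan is to define the symmetric monoidal data on $\Kl{T}$ explicitly and then verify the axioms, with the bulk of the work consisting in functoriality of the tensor product together with coherence. At the level of objects we reuse the tensor product of $\D$, and all structure isomorphisms (associator, unitors, braiding) will be defined by applying the canonical inclusion $J : \D \to \Kl{T}$ (which sends $f : X \to Y$ to $\eta_Y \circ f : X \to TY$) to the corresponding structure morphisms of $\D$. Once the data is in place, verifying the axioms is a matter of reducing equations in $\Kl{T}$ to equations in $\D$ via the definition of Kleisli composition.

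The first check is that the proposed tensor product on morphisms is functorial. For identities, we need $\nabla_{X,Y} \circ (\eta_X \otimes \eta_Y) = \eta_{X \otimes Y}$, which is exactly \cref{monmon2}. For composition, given $f_i : A_i \to TX_i$ and $g_i : X_i \to TY_i$ for $i = 1, 2$, we must compare the two morphisms
\[
    \nabla_{Y_1, Y_2} \circ \bigl((\mu_{Y_1} \circ T g_1 \circ f_1) \otimes (\mu_{Y_2} \circ T g_2 \circ f_2)\bigr)
\]
and
\[
    \mu_{Y_1 \otimes Y_2} \circ T \nabla_{Y_1, Y_2} \circ T(g_1 \otimes g_2) \circ \nabla_{X_1, X_2} \circ (f_1 \otimes f_2).
\]
Two applications of naturality of $\nabla$ (to slide $T g_1 \otimes T g_2$ and then $T \mu \otimes T \mu$ through), together with \cref{monmon1}, reduce this to a standard diagram chase.

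With functoriality established, the coherence axioms in $\Kl{T}$ (pentagon, triangles, hexagons) reduce via the definition of the structure morphisms through $J$ to the corresponding coherence axioms in $\D$ combined with the associativity and unit constraints of the lax monoidal structure $\nabla$. The fact that $\eta$ is a monoidal natural transformation, i.e.\ \cref{monmon2}, ensures strict symmetric monoidality of $J$: the equality $J(f) \otimes_{\Kl{T}} J(g) = J(f \otimes g)$ reduces after applying naturality of $\eta$ to precisely \cref{monmon2}, while preservation of the unit and structure isomorphisms holds by definition.

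The main obstacle is not any single deep step but rather the bookkeeping involved in unrolling the Kleisli composition law and threading it through the various coherence diagrams; this is the reason the result is classical and references such as \cite{guitart,seal} are available. In writing up the proof I would display the two diagrams above for functoriality of the tensor product, then indicate briefly how each coherence axiom reduces to one in $\D$ together with a lax-monoidality axiom of $T$, and finally defer the complete coherence verification to the cited literature.
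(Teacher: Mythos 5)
The paper does not prove this proposition itself; it states it as a standard result and cites Guitart and Seal, and your sketch is precisely the standard argument those references give: tensor of morphisms via $\nabla\circ(f\otimes g)$, unitality of the tensor from \cref{monmon2}, functoriality of composition from naturality of $\nabla$ plus \cref{monmon1}, structure isomorphisms imported along $J$ with coherence (including naturality of those isomorphisms against arbitrary Kleisli morphisms) reducing to coherence in $\D$ together with the lax symmetric monoidal axioms for $\nabla$. The only nitpicks are cosmetic: the composition check needs just one application of naturality of $\nabla$ (the ``$T\mu\otimes T\mu$'' slide is not needed once \cref{monmon1} is invoked), and strictness of $J$ uses bifunctoriality of $\otimes$ in $\D$ plus \cref{monmon2} rather than naturality of $\eta$.
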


Thanks to the final statement, we can also transport comonoid structures along the inclusion functor: if an object $X \in \D$ carries a distinguished comonoid structure, then so does the corresponding object $X \in \Kl{T}$. Moreover, if the monoidal unit $I$ is terminal in $\D$ and $TI \cong I$ (such a monad is also called \emph{affine}), then $I$ is terminal in $\Kl{T}$ as well. Thus we have the following:

\begin{cor}
	\label{affine_monoidal_monad}
	Let $\D$ be a Markov category and $T$ a symmetric monoidal affine monad on $\D$. Then the Kleisli category $\Kl{T}$ is again a Markov category in a canonical way.
\end{cor}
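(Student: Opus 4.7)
The plan is to leverage the preceding proposition, which already equips $\Kl{T}$ with a symmetric monoidal structure and exhibits a strict symmetric monoidal inclusion functor $J : \D \to \Kl{T}$. I will take the candidate commutative comonoid structure on each $X \in \Kl{T}$ to be the transport of the one given by the Markov category $\D$, i.e.\ $\cop_X^{\Kl{T}} := J(\cop_X)$ and $\del_X^{\Kl{T}} := J(\del_X)$. All that then needs to be verified is that these choices satisfy the axioms of \Cref{markov_cat}.

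First I would dispatch the easy clauses: because $J$ is strict symmetric monoidal, it preserves every equation built purely from $\otimes$, identities, associators, unitors and the symmetries $\swap_{X,Y}$. Hence applying $J$ to the commutative comonoid equations \cref{comonoid_ass,comonoid_other} and to the tensor-product compatibility \cref{delcopyAB} in $\D$ immediately gives those same equations in $\Kl{T}$ for $J(\cop_X)$ and $J(\del_X)$. So these pieces come for free from the hypothesis that $\D$ is a Markov category, with no further work required.

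The remaining obligation, and the main obstacle, is the naturality of the counit \cref{counit_nat} in $\Kl{T}$: for an arbitrary Kleisli morphism $f : X \to Y$, represented by some $\tilde f : X \to TY$ in $\D$, one must show $\del_Y^{\Kl{T}} \circ_{\Kl{T}} f = \del_X^{\Kl{T}}$. Naturality in $\D$ only covers morphisms in the image of $J$, so something extra is needed. Unfolding Kleisli composition, the left-hand side is represented by $\mu_I \circ T(\eta_I \circ \del_Y) \circ \tilde f = \mu_I \circ T\eta_I \circ T\del_Y \circ \tilde f$, which collapses via the monad unit law $\mu_I \circ T\eta_I = \id_{TI}$ to $T\del_Y \circ \tilde f : X \to TI$; the right-hand side is represented by $\eta_I \circ \del_X : X \to TI$. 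Here is where affineness is used: the hypothesis $TI \cong I$ combined with the terminality of $I$ in $\D$ forces $TI$ to be terminal in $\D$ as well, so any two morphisms $X \to TI$ in $\D$ coincide, yielding the required equation between representatives. Equivalently, this is exactly the author's earlier observation that $I$ remains terminal in $\Kl{T}$ under affineness; combined with $\del_I = \id_I$, terminality gives naturality of $\del$ at once.

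Putting these pieces together verifies every clause of \Cref{markov_cat}: the commutative comonoid laws and tensor-product compatibilities transfer along the strict symmetric monoidal functor $J$, naturality of $\del$ reduces to terminality of $TI$ via the affineness assumption, and the symmetric monoidal structure on $\Kl{T}$ has already been supplied by the preceding proposition. The only real content, beyond transporting structure along $J$, is the naturality step, which is where the affineness condition does its work.
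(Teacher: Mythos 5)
Your proposal is correct and follows essentially the same route as the paper: transport the comonoid structures along the strict symmetric monoidal inclusion $\D \to \Kl{T}$, and use affineness to make $I$ (equivalently $TI$) terminal in $\Kl{T}$, which yields naturality of $\del$. The paper states this more tersely, leaving the Kleisli-composition unfolding implicit, but the content is identical.
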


Concretely, the comonoid multiplication of an object $X \in \Kl{T}$ is represented by the composite morphism
\[
	\begin{tikzcd}
		X \ar["\cop_X"]{r} & X \otimes X \ar["\eta"]{r} & T(X \otimes X).
	\end{tikzcd}
\]

In the case where $\D$ is cartesian monoidal, \Cref{affine_monoidal_monad} has essentially been given by Golubtsov~\cite[Theorem~2]{golubtsov2} and has also been hinted at by Cho and Jacobs~\cite[p.~6/7]{cho_jacobs}. For example, the submonad of normalized measures of a \emph{measure category} in the sense of~\cite[Definition~4.2]{Setal} is one context in which the result applies.

\begin{ex}
	For $\D := \FinSet$ considered as a cartesian monoidal category, let $T$ be the covariant nonempty powerset monad, whose underlying functor is $TX := 2^X \setminus \{\emptyset\}$. Then $T$ comes equipped with a canonical symmetric monoidal structure $\nabla_{X,Y}$ which takes a subset of $X$ and subset of $Y$ and maps it to their cartesian product, which is a subset of $X \times Y$. Then \Cref{affine_monoidal_monad} applies, and the resulting Kleisli category $\Kl{T}$ is exactly the category of finite sets and multivalued functions from \Cref{setmulti}.
\end{ex}

\section{Example: measurable spaces and measurable Markov kernels}
\label{sec_giry}

In this section, we introduce $\Stoch$, the most general category of measurable Markov kernels between measurable spaces, as considered as a running example in the later sections, as well as its more well-behaved subcategory $\BorelStoch$ consisting of standard Borel spaces. $\Stoch$ contains as particular subcategories also many other Markov categories that may be of interest to probabilists and statisticians; see e.g.~\Cref{gauss_stoch}. It has previously been considered as a Markov category in~\cite[Example~2.5]{cho_jacobs} and with minor variations prior to that in~\cite[Section~8.1]{golubtsov2}.

Roughly speaking, $\Stoch$ is the generalization of $\FinStoch$ from \Cref{finstoch} beyond the finite case, facilitating the treatment of Markov kernels between arbitrary measurable spaces. This category can be considered to be \emph{the} paradigmatic example of a Markov category to which our formalism applies. $\Stoch$ can be obtained by applying the construction of \Cref{sec_mon_mond} to the \emph{Giry monad}. But let us start with a brief outline of the illustrious history of this category, or rather that part of its history that we know of.

It seems that the category $\Stoch$ was first considered by Lawvere~\cite{lawvere} in 1962, before Kleisli categories had been introduced in general~\cite{kleisli}. Lawvere used these ideas to develop aspects of a formal theory of stochastic processes, and seems to have envisioned synthetic probability theory, as partly developed in this paper, more generally. In 1965, and apparently independently of Lawvere's work, the category $\Stoch$ was also considered by \v{C}encov as the \emph{category of statistical decision rules}~\cite{cencov} in the context of information geometry, and \v{C}encov derived another description of this category~\cite[Theorem~5.2]{cencov_book} reminiscent of what we now know as the Kleisli category construction~\cite{jhht}. In 1966, a closely related category, in fact containing $\Stoch$ as a full subcategory, was introduced by Morse and Sacksteder~\cite{MS}, again apparently independently of Lawvere and \v{C}encov. The construction of $\Stoch$ as a Kleisli category was made explicit in 1982 by Giry~\cite[Section~4]{giry}, using what we now call the \emph{Giry monad} based only on Lawvere's ideas. A more contemporary expository account can be found e.g.~in Panangaden's book on labelled Markov processes~\cite[Chapter~5]{panangaden}\footnote{Panangaden works with \emph{subprobability} measures, meaning measures normalized to $\leq 1$. This does not change anything substantially.}. We will follow his account by spelling out directly what the Kleisli category is.

To begin, let $\Meas$ be the category of measurable spaces $(X,\Sigma_X)$ with measurable maps as morphisms. $\Meas$ becomes symmetric monoidal upon equipping it with the usual product of measurable spaces with product $\sigma$-algebras,
\[
	(X,\Sigma_X) \otimes (Y,\Sigma_Y) := (X\times Y, \Sigma_X \otimes \Sigma_Y),
\]
The Giry monad functor $P : \Meas \to \Meas$ assigns to every measurable space $(X,\Sigma_X)$ the set of probability measures $P(X,\Sigma_X)$, equipped with the smallest $\sigma$-algebra which makes the evaluation map
\[
	P(X,\Sigma_X) \longrightarrow [0,1], \qquad \mu \longmapsto \mu(S)
\]
measurable for every $S \in \Sigma_X$. It acts on measurable maps by taking every probability measure to its pushforward, which is easily seen to be a measurable operation itself.

We omit the construction of the unit and the multiplication of the Giry monad, referring to~\cite{giry,avery} for the details. Instead, we spell out directly what the Kleisli category $\Stoch := \Kl{P}$ looks like. The objects are again measurable spaces. A morphism $(X,\Sigma_X) \to (Y,\Sigma_Y)$ is given by a map
\[
	f : \Sigma_Y \times X \longrightarrow [0,1], \qquad (S,x) \longmapsto f(S|x),
\]
having the property that every $f(-|x) : \Sigma_Y \to [0,1]$ is a probability measure, and such that $f(S|-) : X \to [0,1]$ is measurable for every $S \in \Sigma_Y$. The vertical bar reminds us that we think of $f$ as a specification of probability measures conditional on any given $x \in X$. Composition with an analogous $g : \Sigma_Z \times Y \to [0,1]$ is given by a version of the Chapman--Kolmogorov equation~\cref{ck},
\beq
	\label{ck2}
	g\circ f \: : \: \Sigma_Z \times X \longrightarrow [0,1], \qquad (g\circ f)(T|x) := \int_{y \in Y} g(T|y) \, f(dy|x).
\eeq
Here, the integral exists since $g(T|-)$ is measurable by assumption, and trivially bounded since it is $[0,1]$-valued. The $\sigma$-additivity in $T$ follows from the $\sigma$-additivity of $g(-|y)$ by the dominated convergence theorem. The measurability in $x$ for fixed $T$ holds by the measurability assumption on $f$ whenever $g$ is a simple function; the definition of the Lebesgue integral then implies the general case, using the fact that the limit of a sequence of measurable functions is again measurable. For the proof that this indeed gives a category, see e.g.~\cite[Proposition~5.2]{panangaden}. Identity morphisms are given by
\[
	\id_X(S|x) = \begin{cases} 1 & \textrm{ if } x \in S, \\ 0 & \textrm{ otherwise}. \end{cases}
\]
In order to make $\Stoch = \Kl{P}$ into a Markov category~\cite[Example~2.5]{cho_jacobs}, we use \Cref{affine_monoidal_monad} and make $P$ into a symmetric monoidal monad via the maps
\beq
	\label{giry_products}
	P(X,\Sigma_X) \otimes P(Y,\Sigma_Y) \longrightarrow P( (X,\Sigma_X) \otimes (Y,\Sigma_Y) ), \qquad (\mu,\nu) \longmapsto \mu \otimes \nu.
\eeq
As usual in categorical probability, these maps implement the formation of product measures $\mu\otimes\nu$~\cite{FP}.

\begin{lem}
	\label{giry_affine_monoidal}
	These maps make the Giry monad into an affine symmetric monoidal monad.
\end{lem}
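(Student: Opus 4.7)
The plan is to verify in turn: (i) affineness, (ii) that $\nabla_{X,Y}$ is a well-defined morphism in $\Meas$, (iii) naturality, (iv) the coherence axioms making $P$ into a lax symmetric monoidal functor with unit $\eta_I : I \to PI$, and finally (v) the two monoidal-transformation diagrams \cref{monmon1} and \cref{monmon2}. Throughout, the central principle will be that two probability measures on a product space agree as soon as they agree on measurable rectangles, so almost every verification reduces to an identity on rectangles together with a monotone-class or Dynkin $\pi$--$\lambda$ argument.

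Affineness is trivial: the terminal measurable space $I = \{*\}$ admits exactly one probability measure $\delta_*$, so $\eta_I : I \to PI$ is an isomorphism. For (ii), by the universal property of the $\sigma$-algebra on $P(X\otimes Y)$, it suffices to show that $(\mu,\nu)\mapsto (\mu\otimes\nu)(S)$ is measurable for every $S\in \Sigma_X\otimes \Sigma_Y$; this holds on rectangles $S = A\times B$ where the value factors as $\mu(A)\,\nu(B)$, a product of coordinate-evaluations, and extends to all of $\Sigma_X\otimes \Sigma_Y$ by a Dynkin argument applied to the collection of $S$ for which the assertion holds. Naturality (iii) amounts to $(f\times g)_*(\mu\otimes\nu) = f_*\mu \otimes g_*\nu$, which again is checked on rectangles and extended. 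The lax symmetric monoidality axioms (iv) reduce to the standard identities $(\mu\otimes\nu)\otimes\rho = \mu\otimes(\nu\otimes\rho)$, $\mu\otimes\delta_* = \mu$, and $\swap_*(\mu\otimes\nu) = \nu\otimes\mu$, each established by agreement on rectangles. Compatibility with $\eta$ (diagram \cref{monmon2}) reduces to $\delta_x\otimes\delta_y = \delta_{(x,y)}$, which is immediate.

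The real content is diagram \cref{monmon1}, which demands that for $\Xi\in P(PX)$ and $\Psi\in P(PY)$,
\[
	\mu_{X\otimes Y}\bigl(P\nabla_{X,Y}(\nabla_{PX,PY}(\Xi,\Psi))\bigr) \;=\; \nabla_{X,Y}\bigl(\mu_X(\Xi), \mu_Y(\Psi)\bigr).
\]
Unfolding the Giry multiplication $\mu_X(\Xi)(A) = \int_{\rho\in PX} \rho(A)\, d\Xi(\rho)$, the right-hand side evaluated on a rectangle $A\times B$ equals
\[
	\Bigl(\int \rho(A)\, d\Xi(\rho)\Bigr)\Bigl(\int \sigma(B)\, d\Psi(\sigma)\Bigr) \;=\; \iint \rho(A)\,\sigma(B)\, d\Xi(\rho)\, d\Psi(\sigma)
\]
by Fubini, while the left-hand side on the same rectangle is $\iint (\rho\otimes\sigma)(A\times B)\, d(\Xi\otimes\Psi)(\rho,\sigma)$, which equals the same double integral because $(\rho\otimes\sigma)(A\times B) = \rho(A)\sigma(B)$ and because $\Xi\otimes\Psi$ integrates rectangle-valued integrands as an iterated integral. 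Equality on rectangles propagates to the full product $\sigma$-algebra as before, yielding \cref{monmon1}.

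The main obstacle is really just the Fubini verification underlying \cref{monmon1}, together with the measurability of the integrand $\rho \mapsto \rho(A)$ on $PX$ --- but this latter is built into the very definition of the $\sigma$-algebra on $PX$, so the proof reduces to routine measure-theoretic bookkeeping once one is willing to repeatedly invoke uniqueness of product measures on rectangles.
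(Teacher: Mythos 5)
Your proof is correct and, on the one point the paper treats as nontrivial---the joint measurability of $(\mu,\nu)\mapsto\mu\otimes\nu$---it uses exactly the paper's argument: reduce to evaluation on measurable rectangles, where the map factors through $\mathrm{ev}_A\otimes\mathrm{ev}_B$ followed by multiplication, and extend by a $\pi$-$\lambda$ argument. The remaining verifications you spell out (naturality, the coherence axioms, and the Fubini computation for the compatibility with the Giry multiplication) are the ones the paper dismisses as straightforward, and your treatment of them is sound.
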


\begin{proof}
	It is clear that the Giry monad is affine. It is also straightforward to check that the maps~\cref{giry_products} are natural in $X$ and $Y$, as well as that all the equations hold that make $P$ into a symmetric monoidal monad. The only problem is that it is not so clear whether~\cref{giry_products} is a morphism of $\Meas$ in the first place, i.e.~whether it is measurable. We now focus on proving this.

	Since the $\sigma$-algebra on $P( (X,\Sigma_X) \otimes (Y,\Sigma_Y) )$ is the smallest one which makes the evaluation map on every $S \in \Sigma_X \otimes \Sigma_Y$ measurable, it is enough to show that the composite
	\[
		\begin{tikzcd}
			P(X,\Sigma_X) \otimes P(Y,\Sigma_Y) \ar{r} & P( (X,\Sigma_X) \otimes (Y,\Sigma_Y) ) \ar["{\mathrm{ev}_S}"]{r} & \left[0,1\right]
		\end{tikzcd}
	\]
	is jointly measurable for each fixed $S$. Concretely, this map is given by
	\[
		P(X,\Sigma_X) \otimes P(Y,\Sigma_Y) \longrightarrow [0,1],\qquad (\mu,\nu) \longmapsto (\mu\otimes\nu)(S).
	\]
	We prove that it is measurable using a standard application of the $\pi$-$\lambda$-theorem. Namely, consider the collection of all measurable $S$ which make this map measurable. Since limits of pointwise convergent sequences of measurable maps are again measurable, this set system is closed under countable disjoint unions; and it is clearly closed under complements, so that we have a $\lambda$-system. On the other hand, $\Sigma_X \otimes \Sigma_Y$ is itself the $\sigma$-algebra generated by the rectangle sets $T \times U$ with $T \in \Sigma_X$ and $U \in \Sigma_Y$, which itself form a $\pi$-system. Thus by the $\pi$-$\lambda$-theorem, it is now enough to prove the claim in the case $S = T \times U$, i.e.~to show that the map
	\[
		P(X,\Sigma_X) \otimes P(Y,\Sigma_Y) \longrightarrow [0,1], \qquad (\mu,\nu) \longmapsto \mu(T) \nu(U)
	\]
	is measurable for every $T \in \Sigma_X$ and $U \in \Sigma_Y$. But now this is because this map is equal to the composite
	\beq
		\label{product_square}
		\begin{tikzcd}
			P(X,\Sigma_X) \otimes P(Y,\Sigma_Y) \ar["\mathrm{ev}_T \otimes \mathrm{ev}_U"]{r} & \left[0,1\right] \otimes \left[0,1\right] \ar["\cdot"]{r} & \left[0,1\right],
		\end{tikzcd}
	\eeq
	where the second map is just multiplication, which is clearly measurable.
\end{proof}

Concretely, the comonoid maps in $\Stoch$ are given by
\[
	\cop_{(X,\Sigma_X)} : (\Sigma_X \otimes \Sigma_X) \times X \longrightarrow [0,1], \qquad (S \times T,x) \longmapsto \begin{cases} 1 & \textrm{ if } x \in A \cap B, \\ 0 & \textrm{ otherwise}. \end{cases}
\]
for any generating rectangle $A \times B$ in the product $\sigma$-algebra. Indeed this corresponds to assigning to every $x\in X$ the measure $\delta_{x,x}$ on $X\times X$.

Here is an observation which we will frequently use when instantiating our definitions and results in $\Stoch$:

\begin{lem}
	\label{stoch_equal}
	Two morphisms $f, g : A \to X_1 \otimes \ldots \otimes X_n$ in $\Stoch$ are equal if and only if
	\[
		f(S_1 \times \ldots \times S_n|a) = g(S_1 \times \ldots \times S_n|a)	
	\]
	for all $a \in A$ and $S_i \in \Sigma_{X_i}$.
\end{lem}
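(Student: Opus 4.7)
The forward direction is immediate from the definition, so the content is in the converse: pointwise agreement of $f$ and $g$ on generating rectangles must imply agreement on every measurable subset of the product.

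My plan is to fix $a \in A$ and regard $f(-|a)$ and $g(-|a)$ as two probability measures on the product measurable space $(X_1 \times \cdots \times X_n, \Sigma_{X_1} \otimes \cdots \otimes \Sigma_{X_n})$. By hypothesis, these two probability measures agree on all measurable rectangles $S_1 \times \cdots \times S_n$. Since the collection of such rectangles is closed under finite intersections (using $(S_1 \times \cdots \times S_n) \cap (T_1 \times \cdots \times T_n) = (S_1 \cap T_1) \times \cdots \times (S_n \cap T_n)$), it forms a $\pi$-system, and by definition of the product $\sigma$-algebra it generates $\Sigma_{X_1} \otimes \cdots \otimes \Sigma_{X_n}$. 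The uniqueness part of the $\pi$-$\lambda$ theorem (Dynkin's theorem on the uniqueness of measures agreeing on a generating $\pi$-system, valid because both measures assign the same finite total mass $1$ to the whole space) then gives $f(S|a) = g(S|a)$ for every $S$ in the product $\sigma$-algebra. Since this holds for each $a \in A$, we conclude that the two Kleisli morphisms $f, g : A \to X_1 \otimes \cdots \otimes X_n$ coincide as maps $(\Sigma_{X_1} \otimes \cdots \otimes \Sigma_{X_n}) \times A \to [0,1]$.

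There is essentially no obstacle here: the only thing to check is that the proof of the $\pi$-$\lambda$ theorem applies, which it does since both $f(-|a)$ and $g(-|a)$ are honest probability measures (so the $\lambda$-system of sets on which they agree is closed under proper differences and countable increasing unions, as required). The proof is a standard one-line invocation of Dynkin's lemma, with the $a$-parameter playing no real role. One could phrase it even more slickly by noting that this is just the standard uniqueness of extension for product measures, applied to each conditional. The result is worth isolating because it allows us to verify identities between morphisms in $\Stoch$ by checking them only on rectangles, which is how we will typically instantiate later abstract arguments.
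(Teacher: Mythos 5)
Your proof is correct and follows exactly the paper's argument: fix $a$, observe that the measurable rectangles form a $\pi$-system generating the product $\sigma$-algebra, and apply the $\pi$-$\lambda$ theorem to the two probability measures $f(-|a)$ and $g(-|a)$. No differences worth noting.
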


\begin{proof}
	Use the fact that the cylinder sets $S_1 \times \ldots \times S_n$ form a $\pi$-system which generates the product $\sigma$-algebra $\Sigma_{X_1} \otimes \ldots \otimes \Sigma_{X_n}$, and apply the $\pi$-$\lambda$-theorem.
\end{proof}

\newcommand{\negligible}[1]{\mathcal{N}_{#1}}	

Although we will consider measure-theoretic probability to take place in $\Stoch$ in the rest of this paper, there is evidence that $\Stoch$ is not the only or even the best Markov category for this purpose; see~\Cref{stoch_det,stoch_disint} for some of its counterintuitive and perhaps undesirable features. However, its full subcategory $\BorelStoch \subseteq \Stoch$, consisting of all standard Borel spaces or equivalently all Polish spaces together with measurable maps as morphisms, turns out to be much more well-behaved. Since the Giry monad restricts to Polish spaces, $\BorelStoch$ also arises as a Kleisli category via \Cref{affine_monoidal_monad}. As far as we are aware, the only thing left to be desired of $\BorelStoch$ is that the Kolmogorov extension theorem holds in this Markov category only with respect to countable products~\cite[Example~3.6]{FR}.

Finally, we present two other candidate categories which are of potential interest in the context of measure-theoretic probability, based on two related definitions of morphisms given by Sacksteder and Dawid, and which may relate to conditional expectations (see also~\Cref{cond_exp}). Readers who are less interested in these technical details may proceed to \Cref{sec_radon} without any loss.

If $(X,\Sigma_X)$ is a measurable space, then a \emph{$\sigma$-ideal} is a subcollection $\negligible{X} \subseteq \Sigma_X$ closed under countable unions and intersections with elements of $\Sigma_X$. The idea is that $\negligible{X}$ specifies a collection of sets which are considered negligibly small. If $(X,\Sigma_X,\negligible{X})$ and $(Y,\Sigma_Y,\negligible{Y})$ are two measurable spaces with $\sigma$-ideals, then their \emph{product} is defined by
\[
	(X,\Sigma_X,\negligible{X}) \otimes (Y,\Sigma_Y,\negligible{Y}) := (X \times Y, \Sigma_X \otimes \Sigma_Y, (\negligible{X} \times \Sigma_Y) \lor (\Sigma_X \times \negligible{Y})),
\]
where the indicated new $\sigma$-ideal is the one generated by all sets of the form $S \times T$ for $S \in \negligible{X}$ and $T \in \Sigma_Y$ or $S \in \Sigma_X$ and $T \in \negligible{Y}$. We would like this product to be the object part of a monoidal structure in both of the categories which we define next, although we have not yet been able to define a monoidal structure on the morphisms.

We will say that some property holds \emph{almost everywhere} if the set of points where it does not hold is in the specified $\sigma$-ideal.

\begin{ex}
	\label{sacksteder_stoch}
	Following Sacksteder~\cite[Section~2]{sacksteder}, a \emph{statistical operation} $(X,\Sigma_X,\negligible{X}) \to (Y,\Sigma_Y,\negligible{Y})$ is an equivalence class of maps
	\[
		f : \Sigma_Y \times X \longrightarrow [0,1], \qquad (S,y) \longmapsto f(S|y),
	\]
	which satisfy the following conditions:
	\begin{enumerate}
		\item\label{ssa} For given $S \in \Sigma_Y$, the value $f(S|x)$ may not be defined for every $x$, but is defined almost everywhere;
		\item $f(Y|x) = 1$ for almost every $x$;
		\item Given a sequence of disjoint sets $(S_n)_{n\in\N}$ in $\Sigma_X$, we have $f\left( \bigcup_n S_n | x\right) = \sum_n f(S_n|x)$ for almost every $x$;
		\item\label{ssd} For every $S \in \negligible{Y}$, we have $f(S|x) = 0$ for almost every $x$.
	\end{enumerate}
	Although this does not seem to have been considered by Sacksteder, it is natural for two such maps $f_1$ and $f_2$ to be considered equivalent, $f_1 \sim f_2$, if for every $S \in \Sigma_Y$ the equation $f_1(S|x) = f_2(S|x)$ holds for almost every $x$.

	We define composition of such statistical operations using the already familiar Chapman--Kolmogorov equation~\eqref{ck2}. Some straightforward measure-theoretic arguments show that this integral indeed makes sense, that the resulting composite again satisfies properties~\ref{ssa} to~\ref{ssd}, and that this composition operation respects the equivalence. Since the associativity of composition and the existence of identities are obvious, we can define the category of statistical operations in Sacksteder's sense, using equivalence classes of statistical operations as morphisms.

	In order to make this into a monoidal category, for every $f : A \to X$ and $g : B \to Y$ we would like to define their tensor product $f \otimes g : A \otimes B \to X \otimes Y$. On rectangles $S\times T$ for $S \in \Sigma_X$ and $T \in \Sigma_Y$, this tensor product should be a new statistical operation satisfying
	\[
		(f \otimes g)(S\times T|a,b) := f(S|a) \, g(T|b)
	\]
	whenever the right-hand side is defined. However, since neither $f(-|a)$ nor $g(-|b)$ is $\sigma$-additive in general, it is not clear to us whether this definition can be extended in any meaningful way from measurable rectangles to the product $\sigma$-algebra. In other words, it is not clear to us whether Sacksteder's category of statistical operations can be made into a symmetric monoidal category, let alone into a Markov category.
\end{ex}

For $(X,\Sigma_X,\negligible{X})$ a measurable space equipped with a $\sigma$-ideal, we write $L^\infty(X,\Sigma_X,\negligible{X})$, or more concisely $L^\infty(X)$, for the set of equivalence classes of bounded measurable functions $f : X \to \R$, where $f_1 \sim f_2$ if and only if $f_1(x) = f_2(x)$ for almost every $x$. Declaring $f \geq 0$ to hold if $f(x) \geq 0$ for almost all $x$ makes $L^\infty(X)$ into an ordered vector space.

\begin{ex}
	\label{dawid_stoch}	
	Following Dawid~\cite[Section~3]{dawid_operations}, a \emph{statistical operation} $(X,\Sigma_X,\negligible{X}) \to (Y,\Sigma_Y,\negligible{Y})$ is a map 
	\[
		F : L^\infty(Y) \to L^\infty(X)
	\]
	having the following properties:
	\begin{enumerate}
		\item $F$ is $\R$-linear;
		\item If $g \geq 0$ for $g \in L^\infty(Y)$, then also $F(g) \geq 0$ in $L^\infty(X)$;
		\item $F(1) = 1$;
		\item\label{nScott} If $(g_n)_{n\in\N}$ is a monotonically increasing sequence in $L^\infty(Y)$ whose supremum exists in $L^\infty(Y)$, then $F(\sup_n g_n) = \sup_n F(g_n)$.
	\end{enumerate}
	The idea is that $F$ encodes the operation of pulling back a random variable along the statistical operation. It is clear that these statistical operations can be composed, resulting in the category of statistical operations in Dawid's sense.
	
	As in the previous example, we run into a problem upon trying to define the tensor product of morphisms. If $F : L^\infty(X) \to L^\infty(A)$ and $G : L^\infty(Y) \to L^\infty(B)$ are two operators representing statistical operations $A \to X$ and $B \to Y$, then their tensor product $F \otimes G : L^\infty(A \times B) \to L^\infty(X \times Y)$ should be an operator which satisfies
	\[
		(F \otimes G)(f\otimes g) := F(f) \, G(g)
	\]
	for all $f \in L^\infty(X)$ and $g \in L^\infty(Y)$, where $f\otimes g \in L^\infty(X \times Y)$ is the product function given by $(f\otimes g)(x,y) := f(x) g(y)$, and similarly for the right-hand side\footnote{It is easy to see that the product $f\otimes g \in L^\infty(X \times Y)$ does not depend on the choice of representatives.}. Upon trying to extend this definition to all $h \in L^\infty(X \times Y)$, we run into the problem that $h$ may not have a representative as the supremum of an increasing sequence of functions of the form $\sum_{i=1}^n f_i \otimes g_i$. Hence we again do not know whether this category of statistical operations can be made into a symmetric monoidal category, let alone into a Markov category. 

	A possible way around this problem may be to restrict attention to those measurable spaces equipped with $\sigma$-ideals $(X,\Sigma_X,\negligible{X})$ which are such that $\Sigma_X / \negligible{X}$ is a complete Boolean algebra, and then strengthen condition~\ref{nScott} by replacing sequences by arbitrary directed sets (Scott continuity). Then it would be enough to write every $h \in L^\infty(X \times Y)$ as the directed supremum of a set of functions of the form $\sum_{i=1}^n f_i \otimes g_i$.
\end{ex}

Due to the problems with the definition of the tensor product of morphisms in the previous two examples, we have not yet been able to instantiate the definitions and results of \Cref{sec_det,furtheraxioms,cind,sec_as,suff,sec_basu,sec_bahadur} in these cases. We also do not know whether either of these categories arises as a Kleisli category via \Cref{affine_monoidal_monad}, nor do we know if and how these two categories are related.

\section{Example: compact Hausdorff spaces and continuous Markov kernels}
\label{sec_radon}

In measure theory, it is often useful to work with probability measures on Borel $\sigma$-algebras of suitable topological spaces. We here investigate one particularly clean way to do so, based on the \emph{Radon monad}. It involves $\CHaus$, the category of compact Hausdorff spaces and continuous maps, as perhaps the simplest nontrivial category of not necessarily finite topological spaces. We equip it with the cartesian monoidal structure.

In more detail, the Radon monad $R : \CHaus \to \CHaus$, as introduced by \'Swirszcz~\cite{swirszcz}, and studied further by Furber and Jacobs~\cite{furber_jacobs,furber_thesis}, is the canonical probability monad on compact Hausdorff spaces. Its underlying functor assigns to every $X \in \CHaus$ the set of Radon probability measures on $X$, equipped with the coarsest topology which makes the integration maps
\[
		RX \longrightarrow \R, \qquad \mu \longmapsto \int f \, d\mu
\]
continuous for every continuous $f : X \to \R$. As shown in~\cite{furber_jacobs}, the Kleisli category $\Kl{R}$ is opposite equivalent to the category of unital commutative C*-algebras with positive unital linear maps as the morphisms.

Concretely, a morphism from $X$ to $Y$ in $\Kl{R}$ is represented by a continuous map $f : X \to RY$, and composition is again given by the Chapman--Kolmogorov equation in the form~\cref{ck2}. In order to show that $\Kl{R}$ is also a Markov category, by \Cref{affine_monoidal_monad} it is enough to equip $R$ with the structure of a symmetric monoidal monad. We again want to do this by taking product measures,
\beq
	\label{radon_product}
	RX \times RY \longrightarrow R(X \times Y), \qquad (\mu,\nu) \longmapsto \mu \otimes \nu,
\eeq
but we now run into the problem that the Borel $\sigma$-algebra on $X \otimes Y$ is generally much larger than the product of the Borel $\sigma$-algebras. We therefore define the product measure $\mu \otimes \nu$ via the Riesz representation theorem by putting
\beq
	\label{radon_product2}
	\int_{X \times Y} f \, d(\mu \otimes \nu) := \int_X \int_Y f \, d\mu \, d\nu,
\eeq
leaving the necessary verifications of linearity and positivity to the reader. We now note that:

\begin{lem}
	These maps equip $R$ with the structure of a symmetric monoidal monad.
\end{lem}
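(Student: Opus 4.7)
My plan is to reduce every axiom to Fubini-type statements for continuous functions on compact Hausdorff spaces, combined with density arguments via Stone--Weierstrass and the uniqueness part of the Riesz--Markov--Kakutani theorem.

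First, I would verify that $\mu \otimes \nu$ as defined in \cref{radon_product2} really is a Radon probability measure on $X \times Y$. For fixed $\mu \in RX$, $\nu \in RY$, and $f \in C(X \times Y)$, the slice $y \mapsto f(x,y)$ is continuous on the compact space $Y$, and uniform continuity of $f$ on the compact product $X \times Y$ implies that $x \mapsto \int_Y f(x,y)\, d\nu(y)$ is continuous on $X$. The iterated integral is therefore a well-defined $\R$-linear, positive, unital functional on $C(X \times Y)$, and Riesz--Markov--Kakutani yields a unique Radon probability measure $\mu \otimes \nu$ satisfying \cref{radon_product2}.

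Second, I would show that $\nabla_{X,Y}$ is continuous, so that it is a morphism of $\CHaus$. Since the topology on $R(X \times Y)$ is initial for the maps $\rho \mapsto \int f\, d\rho$ with $f \in C(X \times Y)$, it suffices to check continuity of $(\mu,\nu) \mapsto \int f\, d(\mu \otimes \nu)$ for each such $f$. For a separable $f(x,y) = g(x)h(y)$, this equals $\bigl(\int g\, d\mu\bigr)\bigl(\int h\, d\nu\bigr)$, continuous by definition of the Radon topologies on $RX$ and $RY$. Finite linear combinations of separable functions form a subalgebra of $C(X \times Y)$ that separates points and contains the constants, hence uniformly dense by Stone--Weierstrass. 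A standard $\eps/3$ argument together with the bound $\bigl|\int f\, d(\mu \otimes \nu)\bigr| \leq \|f\|_\infty$ extends continuity to arbitrary $f$.

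With $\nabla$ in hand, the remaining conditions all reduce, by uniqueness in Riesz, to comparing integrals against continuous test functions. Naturality of $\nabla$ in each argument, the associativity and unitality coherence of $\nabla$ (where $RI \cong I$ is immediate since the monoidal unit is a one-point space), symmetry, and the $\eta$-compatibility \cref{monmon2} (which amounts to $\delta_x \otimes \delta_y = \delta_{(x,y)}$) all follow directly from \cref{radon_product2} by classical Fubini for Radon measures.

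The main obstacle is the compatibility \cref{monmon1} with the monad multiplication, since it involves iterated Radon measures on Radon measures. Testing both composites on a separable $f = g \otimes h$ at a pair $(\xi, \eta) \in RRX \times RRY$, one unfolds both sides to the common iterated integral $\int_{RX}\int_{RY} \bigl(\int g\, d\alpha\bigr)\bigl(\int h\, d\beta\bigr)\, d\eta(\beta)\, d\xi(\alpha)$: on one side via a change of variables for the pushforward of $\xi \otimes \eta$ along $\nabla: RX \times RY \to R(X \times Y)$ combined with Fubini for $\xi \otimes \eta$, on the other directly from two applications of the defining property of the monad multiplication. The Fubini swap is justified by the continuity, established in the previous paragraph, of $(\alpha,\beta) \mapsto \int g\, d\alpha \cdot \int h\, d\beta$ on $RX \times RY$. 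A final Stone--Weierstrass density argument extends the equality from separable test functions to all of $C(X \times Y)$, completing the proof.
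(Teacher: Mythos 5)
Your proof is correct and its core — establishing continuity of $\nabla_{X,Y}$ by first checking it on separable functions $g \otimes h$, then invoking Stone--Weierstrass and an $\eps/3$ estimate — is exactly the argument the paper gives, which treats this continuity as the only nontrivial point. The additional details you supply (well-definedness of $\mu \otimes \nu$ via Riesz, and the explicit unfolding of the compatibility with the monad multiplication) are the parts the paper delegates to the reader as straightforward, and your treatment of them is sound.
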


\begin{proof}
	As in the proof of \Cref{giry_affine_monoidal}, the nontrivial part is to show that \cref{radon_product} is actually a morphism, i.e.~that it is continuous, and our argument will be structurally similar.
	
	To see this, it is enough to show that for any continuous $f : X \times Y \to \R$, the composite
	\[
		\begin{tikzcd}
			RX \times RY \ar{r} & R(X \times Y) \ar["\int f \, d(-)"]{r} & \R
		\end{tikzcd}
	\]
	is continuous, which is by definition given by
	\[
		(\mu,\nu) \longmapsto \int_X \int_Y f \, d\mu \, d\nu.
	\]
	If $f$ is of the form $f(x,y) = g(x) h(y)$ for continuous $g : X \to \R$ and $h : Y \to \R$, then it is easy to see that this holds by an argument analogous to the one around~\cref{product_square}. Moreover, the continuity clearly also holds for any linear combination of such functions; and since these linear combinations are also closed under multiplication and clearly separate points, the Stone--Weierstra\ss{} theorem implies that they are dense with respect to the supremum norm.

	Now let an arbitrary $f$ and $\eps > 0$ be given, and suppose that we want to find a neighbourhood $U$ of $(\mu,\nu)$ in $RX \times RY$ such that $\left| \int_X \int_Y f \, d\mu' \, d\nu' - \int_X \int_Y f \, d\mu \, d\nu \right| < \eps$ for all $(\mu',\nu') \in U$. We first choose an approximation $\left\| f - \sum_{i=1}^n g_i \otimes h_i \right\|_\infty < \eps/3$ for suitable $n \in \N$ and continuous $g_i : X \to \R$ and $h_i : Y \to \R$, which is possible by the previous paragraph. We next choose a neighbourhood $U$ such that the desired inequality holds for every $g_i \otimes h_i$ in place of $f$, and with quality of approximation $\eps/3n$ in place of $\eps$. This gives, for every $(\mu',\nu') \in U$,
	\begin{align*}
		\left| \int_X \int_Y f \, d\mu' \, d\nu' \right. & \left. - \int_X \int_Y f \, d\mu \, d\nu \right| \\[4pt]
		\leq & \quad \left| \int_X \int_Y \left( f - \sum_{i=1}^n g_i \otimes h_i \right) \, d\mu' \, d\nu' \right| \\[4pt]
		& + \left| \sum_{i=1}^n \int_X \int_Y (g_i \otimes h_i) \, d\mu' \, d\nu' - \sum_{i=1}^n \int_X \int_Y (g_i \otimes h_i) \, d\mu \, d\nu \right| \\[4pt]
		& + \left| \int_X \int_Y \left( f - \sum_{i=1}^n g_i \otimes h_i \right) \, d\mu \, d\nu \right| \\[4pt]
		\leq {} & \eps/3 + n\cdot \eps/3n + \eps/3 = \eps,
	\end{align*}
	as was to be shown.
\end{proof}

We therefore have that $\Kl{R}$ is a Markov category per \Cref{affine_monoidal_monad}. The copy maps are again represented by $x \mapsto \delta_{(x,x)}$.

\section{Example: Gaussian probability theory}
\label{gaussian}

We now consider a Markov category describing Gaussian probability theory. This involves formulas for the composition of Gaussian conditionals reproducing those of Lauritzen and Jensen~\cite[Section~4.4]{LJ}. As our previous two examples, it also has the flavour of a Kleisli category, although we conjecture that it does not actually arise in that way. A similar category was also considered by Golubtsov in~\cite[\S~6]{golubtsov_sls} and~\cite[Section~8.2]{golubtsov2}, who in addition claimed to have proven that it is not a Kleisli category.

\newcommand{\E}[1]{\mathbf{E}[#1]}
\newcommand{\Var}[1]{\mathbf{Var}[#1]}

So let $\Gauss$ be the monoidal category whose monoid of objects is $(\N,+)$, and whose morphisms $n\to m$ are tuples $(M,C,s)$ with $M\in \R^{m\times n}$, $C\in \R^{m\times m}$ positive semidefinite, and $s\in\R^m$; we think of this data as defining the conditional distribution of a random variable $Y\in\R^m$ in terms of $X\in\R^n$ as
\begin{equation}
\label{gausstoch}
	Y := MX + \xi,
\end{equation}
where $\xi$ is Gaussian noise independent of $X$ with mean $\E{\xi} := s$ and covariance matrix
\[
	\Var{\xi} = \E{\xi \xi^T} - \E{\xi} \E{\xi}^T = \E{(\xi - s)(\xi - s)^T} := C.
\]
If $C$ is invertible, then the distribution of $Y$ has a density with respect to the Lebesgue measure on $\R^m$ which is given by
\[
	\frac{1}{\sqrt{(2\pi)^m \mathrm{det}(C)}} \exp\left( -\frac{1}{2}(y-Mx-s)^T C^{-1} (y-Mx-s) \right).
\]
Before formally defining the composition of two morphisms
\[
	(M,C,s) : \ell \to m, \qquad (N,D,t) : m \to n,
\]
let us consider what it should represent. If $Y = M X + \xi$ and $Z = N Y + \eta$, then we get
\[
	Z = N M X + N \xi + \eta.
\]
When composing two morphisms in a category of Markov kernels, we assume that the noise added in each of them is independent of the other. Hence we expect
\[
	\E{N \xi + \eta} = N \, \E{\xi} + \E{\eta} = N s + t,
\]
and
\[
	\Var{N \xi + \eta} = N\, \Var{\xi} N^T + \Var{\eta} = N C N^T + D.
\]
Therefore what we want to define is the composition
\[
	(N,D,t) \circ (M,C,s) := (NM,NCN^T + D,Ns + t),
\]
where it is straightforward to check that this is indeed associative and has the obvious identity morphisms $(1_n,0,0)$. This composition operation is our version of the ``direct combination'' equations given at~\cite[Section~4.4]{LJ}, specialized to case where no discrete variables are present\footnote{It should not be difficult to combine $\FinStoch$ and $\Gauss$ into one Markov category which models both discrete variables and Gaussian ones as in~\cite{LJ}, but we have not yet done so.}. The tensor product of morphisms should correspond to acting on two random variables at the same time. This means that we want to take direct sums of all the data describing two arbitrary morphisms,
\[
	(M, C, s) \otimes (N, D, t) := (M \oplus N, C \oplus D, s \oplus t),
\]
since for independent random variables $\xi \in \R^m$ and $\eta \in \R^n$, the expectation and variance of $(\xi,\eta) \in \R^{m + n}$ are given by
\[
	\E{(\xi,\eta)} = (\E{\xi},\E{\eta}), \qquad \Var{(\xi,\eta)} = \left( \begin{matrix} \Var{\xi} & 0 \\ 0 & \Var{\eta} \end{matrix} \right).
\]
It is also straightforward to check that this indeed defines a (strict) symmetric monoidal category with the obvious symmetry isomorphisms. It may be interesting to note that forgetting the covariance matrix of every morphism, $(M,C,s) \mapsto (M,s)$, implements a strong monoidal functor from $\Gauss$ to the category of affine maps between Euclidean spaces.

Since the only morphism $n\to 0$ is the trivial one $(0,0,0)$, the monoidal unit $0 \in \N$ is terminal, so that $\Gauss$ is indeed semicartesian. Moreover, every object $n$ has a canonical comultiplication given by
\[
	\cop_n := \left( \left( \begin{matrix} 1_n \\ 1_n \end{matrix} \right), 0, 0 \right),
\]
which simply amounts to copying the value of a variable $X\in\R^n$, resulting in $(X,X)\in\R^{n + n}$. The necessary requirements to be a Markov category are again straightforward to check.

We briefly investigate the relation between $\Gauss$ and $\Stoch$. Unfortunately this is somewhat interdependent with the upcoming \Cref{cacom}, since the following statement involves the notion of Markov functor introduced there.

\begin{prop}
	\label{gauss_stoch}
	There is a faithful Markov functor $\Gauss \to \Stoch$ which takes $n \mapsto \R^n$, where $\R^n$ carries the Borel $\sigma$-algebra.
\end{prop}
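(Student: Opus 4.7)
The plan is to define the functor $F : \Gauss \to \Stoch$ on objects by $F(n) := \R^n$ with its Borel $\sigma$-algebra, and on a morphism $(M, C, s) : n \to m$ by
\[
	F(M, C, s)(S \mid x) := \mathcal{N}(Mx + s, C)(S),
\]
the (possibly degenerate) Gaussian probability measure on $\R^m$ with mean $Mx + s$ and covariance $C$, evaluated on a Borel set $S \subseteq \R^m$. The forward direction is then to check that this assignment really is a morphism of $\Stoch$, that it is functorial, that it is (strict) symmetric monoidal, that it preserves $\cop$ and $\del$, and finally that it is faithful.

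First I would verify that $x \mapsto \mathcal{N}(Mx+s, C)$ is a morphism of $\Stoch$, i.e.~that $x \mapsto \mathcal{N}(Mx+s, C)(S)$ is Borel measurable for every fixed Borel $S$. The cleanest route is to observe that $\mathcal{N}(Mx+s, C)$ is the pushforward of the fixed centred Gaussian $\mathcal{N}(0, C)$ along the continuous affine map $y \mapsto y + Mx + s$; since pushforward along a continuously varying map is continuous with respect to the weak topology on the space of probability measures, the assignment $\R^n \to P(\R^m)$ is continuous into the weak topology, hence in particular measurable with respect to the Giry $\sigma$-algebra on $P(\R^m)$ considered in \Cref{sec_giry}. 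An alternative, more pedestrian route is a $\pi$--$\lambda$ argument starting from measurable rectangles, very much in the spirit of \Cref{stoch_equal} and the proof of \Cref{giry_affine_monoidal}.

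Next I would verify the structural properties making $F$ a Markov functor in the sense of \Cref{cacom}. Functoriality is precisely the computation already used to motivate the composition rule in $\Gauss$: if $Y \sim \mathcal{N}(Mx+s, C)$ and $Z \mid Y \sim \mathcal{N}(NY+t, D)$ with independent noises, then marginally $Z \sim \mathcal{N}(NMx + Ns + t,\; NCN^T + D)$, matching the composite $(N,D,t) \circ (M,C,s)$ defined in \Cref{gaussian}. To make this work uniformly in whether $C$ is invertible, I would argue at the level of characteristic functions rather than densities: both sides have characteristic function
\[
	\xi \longmapsto \exp\!\left( i \xi^T (NMx + Ns + t) - \tfrac{1}{2} \xi^T (NCN^T + D) \xi \right),
\]
and the Chapman--Kolmogorov equation \cref{ck2} amounts to this identity. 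Strict symmetric monoidality on objects is immediate from $\R^{n+m} \cong \R^n \times \R^m$ as Borel spaces, and on morphisms it follows from the fact that a product of independent Gaussians is itself Gaussian with block-diagonal covariance $C \oplus D$ and mean $s \oplus t$, which is exactly the tensor product in $\Gauss$. Preservation of $\cop_n$ amounts to noting that the Gaussian with mean $(x, x)$ and zero covariance is $\delta_{(x,x)}$, matching the copy map in $\Stoch$; preservation of $\del_n$ is automatic since both categories are semicartesian.

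Finally, faithfulness follows by reading off the parameters from the image: evaluating the output kernel at $x = 0$ recovers the measure $\mathcal{N}(s, C)$, and varying $x$ through a basis recovers the columns of $M$ from the means. Since a Gaussian measure is uniquely determined by its mean and covariance, distinct triples $(M, C, s)$ yield distinct kernels. The main technical obstacle I anticipate is the measurability step; every other step is essentially routine Gaussian linear algebra, but making measurability airtight in the degenerate case is the one spot where some care is required.
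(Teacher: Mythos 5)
Your proposal is correct and follows exactly the route the paper intends: the paper's own proof simply interprets $(M,C,s)$ as the Gaussian kernel $x \mapsto \mathcal{N}(Mx+s,C)$ and declares the remaining verifications (measurability, functoriality via the Chapman--Kolmogorov computation, monoidality, comonoid preservation, faithfulness) to be straightforward. You have supplied precisely those verifications, and your handling of the degenerate-covariance case via characteristic functions and of measurability via weak continuity (or the $\pi$-$\lambda$ fallback) is sound.
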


\begin{proof}
	We have already described how to interpret a morphism $(M,C,s) : n \to m$ as a Markov kernel taking an $\R^n$-valued variable to an $\R^m$-valued variable. It is straightforward to verify that this defines a Markov functor $\Gauss \to \Stoch$, and that this functor is faithful.
\end{proof}

\begin{rem}
	As pointed out to us by Sam Staton, another Markov category with a similar flavour and significance for probability theory has been considered in~\cite{SSYAFR}. In more explicit categorical terms, the objects are lists of natural numbers $(n_1,\ldots,n_k)$, representing a disjoint union of measurable spaces $[0,1]^{n_1} + \ldots + [0,1]^{n_k}$, and the morphisms are those Markov kernels that are definable just using beta and Bernoulli distributions. The paper \cite{SSYAFR} then gives a description of this category in purely syntactic terms; for example, the comultiplications $\cop_{(n)} : [0,1]^n \to [0,1]^{2n}$ are given syntactically by
	\[
		(p_1,\ldots,p_n) \mid x : 2n \vdash x(p_1,\ldots,p_n,p_1,\ldots,p_n).
	\]
\end{rem}

\section{Example: diagram categories and stochastic processes}
\label{sec_diagrams}

In this section and the subsequent two, we present classes of examples of Markov categories arising from more general categorical constructions. Unfortunately, the construction of this section already requires the notion of deterministic morphism from~\Cref{defn_det} and the subcategory of deterministic morphisms $\C_\det \subseteq \C$ from \Cref{rem_det_cartesian}.

Let $\C$ be a Markov category and $\D$ an arbitrary small category. We then introduce another Markov category $\Fun(\D,\C)$. Its objects are defined to be functors $\D \to \C_\det$, which we think of as diagrams with shape $\D$ of deterministic morphisms in $\C$, and its morphisms are natural transformations between these functors (with not necessarily deterministic components). We use notation $X = (X_d)_{d\in \D}$ for such functors, emphasizing their interpretation as diagrams.

\begin{prop}
	\label{functor_cat}
	Under these assumptions, $\Fun(\D,\C)$ is again a Markov category with respect to the pointwise monoidal structure: for $X, Y \in \Fun(\D,\C)$, we put
	\[
		(X \otimes Y)_d := X_d \otimes Y_d,
	\]
	with the obvious action on morphisms of $\D$, and similarly for the tensor product of morphisms. The comultiplications are also defined pointwise,
	\beq
		\label{diag_comult}
		\left(\cop_X\right)_d := \cop_{X_d} \: : \: X_d \longrightarrow X_d \otimes X_d.
	\eeq
\end{prop}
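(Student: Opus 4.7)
The plan is to transport the Markov category structure on $\C$ to $\Fun(\D,\C)$ componentwise, with all verifications reducing to pointwise instances of the corresponding structure in $\C$; the one nontrivial design point is that the choice to take objects as functors landing in $\C_\det$ (rather than all of $\C$) is dictated exactly by the need for $\cop_X$ to be a natural transformation. Concretely, I would proceed in three steps.

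First, I would check that the pointwise monoidal structure is well-defined. If $X, Y : \D \to \C_\det$, then $(X \otimes Y)(f) := X(f) \otimes Y(f)$ again lands in $\C_\det$ because $\C_\det$ is a symmetric monoidal subcategory of $\C$ (per \Cref{rem_det_cartesian}), so $X \otimes Y$ is again a functor into $\C_\det$. The structure isomorphisms (associator, unitors, symmetry) of $\Fun(\D,\C)$ are defined componentwise from those of $\C$; they assemble into natural transformations because the corresponding morphisms in $\C$ are natural and are deterministic by \Cref{structure_det}, which makes them compatible with the deterministic morphisms $X(f)$, $Y(f)$. The tensor product of two natural transformations $\alpha : X \to X'$, $\beta : Y \to Y'$ defined by $(\alpha\otimes\beta)_d := \alpha_d \otimes \beta_d$ is natural by the bifunctoriality of $\otimes$ in $\C$ together with the naturality of $\alpha$ and $\beta$ separately, and bifunctoriality in $\Fun(\D,\C)$ is then immediate componentwise.

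Second — and this is the pivotal step — I would verify that $\cop_X$ from \cref{diag_comult} and $\del_X := (\del_{X_d})_{d \in \D}$ are natural transformations between functors $\D \to \C_\det$. Naturality of $\cop_X$ at $f : d \to d'$ in $\D$ is the equation
$$(X(f) \otimes X(f)) \circ \cop_{X_d} \;=\; \cop_{X_{d'}} \circ X(f),$$
which is precisely the condition that $X(f)$ is deterministic; hence it holds because $X$ is a functor into $\C_\det$. Naturality of $\del_X$ is automatic since $\del$ is natural in $\C$, equivalently since $I$ is terminal.

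Finally, the commutative comonoid equations \cref{comonoid_ass,comonoid_other}, the monoidal compatibility \cref{delcopyAB}, and the naturality condition \cref{counit_nat} for $\Fun(\D,\C)$ all reduce componentwise to the corresponding equations of morphisms in $\C$, which hold by hypothesis on $\C$. For instance, \cref{counit_nat} applied to an arbitrary natural transformation $\alpha : X \to Y$ becomes $\del_{Y_d} \circ \alpha_d = \del_{X_d}$ at each $d \in \D$, which follows from terminality of $I$ in $\C$. The main obstacle, modest as it is, was the second step; once that deterministic-naturality observation is in place the rest is a mechanical unwinding.
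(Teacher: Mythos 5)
Your proposal is correct and matches the paper's argument exactly: the paper dismisses the verification as straightforward and singles out precisely your second step — that naturality of $\cop_X$ at $f : d \to d'$ is the determinism equation for $X(f)$, which is why the diagrams are required to land in $\C_\det$ — as the one point of substance. The remaining componentwise checks are as routine as you describe.
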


The proof is straightforward; the assumption that all morphisms in the diagrams $(X_d)$ are deterministic is required in order to make the comultiplications~\eqref{diag_comult} into natural transformations.

As an interesting and highly relevant example, we now illustrate how \Cref{functor_cat} lets us instantiate the theory of Markov categories in the context of stochastic processes. Here, we take $\D := (\Z,\geq)$, the category whose objects are integers, with exactly one morphism $n \to m$ if and only if $n \geq m$ and no morphism otherwise. For example with $\C = \BorelStoch$, the functors $\D \to \C_\det$ can be identified with infinite diagrams of measurable spaces
\[
	\begin{tikzcd}
		\ldots \ar{r} & X_{n+1} \ar{r} & X_n \ar{r} & X_{n-1} \ar{r} & \ldots
	\end{tikzcd}
\]
where all the maps involved are measurable functions (\Cref{borelstoch_det}). We think of each component $X_n$ as the space of joint values of a stochastic process over all times $t \le n$, where the map $X_{n+1} \to X_n$ amounts to forgetting the value of the process at time $n + 1$. A morphism $I \to X$ in $\Fun(\D,\C)$ then assigns a probability measure to each of the measurable spaces $X_n$ in such a way that the projections $X_{n+1} \to X_n$ are measure-preserving; thus such a morphism exactly turns $X$ into a stochastic process. More generally, a morphism $f : A \to X$ may be interpreted as a stochastic process with control: $A_n$ is the space of values of control parameters over all times $t \le n$, and the component $f_n : A_n \to X_n$ assigns a joint distribution over all values of the stochastic process as a function of these control parameters. The required commutativity of the diagram
\[
	\begin{tikzcd}
		A_{n+1} \ar{r} \ar["f_{n+1}"]{d} & A_n \ar["f_n"]{d} \\
		X_{n+1} \ar{r} & X_n
	\end{tikzcd}
\]
then ensures that these joint distributions are compatible in exactly the way that one would expect: given the control parameters up to time $n+1$, the marginal of the associated joint distribution up to time $n$ coincides with the joint distribution associated to the control parameters up to time $n$.

It therefore follows that all of our definitions and results can be applied at the level of stochastic processes\footnote{Golubtsov's mention of ``dynamical nondeterministic information transformers [\ldots] and information interactions evolving in time''~\cite[p.~64]{golubtsov2} suggests that he may already have been aware of this possibility.}. For example, the definition of statistic from \Cref{def_statistic} amounts to the following: a statistic consists of another sequence of spaces $(T_n)_{n \in \Z}$ together with a sequence of deterministic maps $s_n : X_n \to T_n$ such that the diagrams
\[
	\begin{tikzcd}
		X_{n+1} \ar{r} \ar["s_{n+1}"]{d} & X_n \ar["s_n"]{d} \\
		T_{n+1} \ar{r} & T_n
	\end{tikzcd}
\]
commute. This states that for every given value of $X_n$, the values of the statistic $T_n$ can be computed from the value of the process up to time $n+1$ by first computing the values of the extended statistic $T_{n+1}$ and then projecting, or by marginalizing the process first to time up to $n$ and then applying the statistic there. Lauritzen's notion of \emph{transitivity} of a sequence of statistics~\cite[Definition~2.1]{lauritzen_book2} is closely related.

Thus we can apply the upcoming results of \Cref{suff,sec_basu,sec_bahadur} to the relevant functor category and thereby obtain results about this type of statistic living in time.

\begin{rem}
	There are many conceivable variations on this basic idea. The extension from discrete to continuous time is fairly obvious. We can also consider \emph{stationary} stochastic processes as follows. Let now $\D$ be the category consisting of one single object, such that its monoid of endomorphisms is $(\Z,+)$. Then the objects of $\Fun(\D,\BorelStoch)$ are measurable spaces equipped with an action of $\Z$, which makes them into dynamical systems. The morphisms $I \to X$ are then easily seen to be in canonical bijection with stationary measures, turning $X$ into a measure-preserving dynamical system.
\end{rem}

\begin{rem}
	The developments of this section are not sufficiently expressive to be turned into a formal theory of stochastic processes, because the latter would clearly require more structure: the theory of Markov categories itself does not provide any way to talk about the temporal aspects. Nevertheless, we believe that it may be possible to develop a formal theory of stochastic processes by equipping Markov categories with extra structure, such as an action of a functor $\C \to \C$ which implements time translation, or more generally a group action on $\C$.
\end{rem}

\section{Example: hypergraph categories}
\label{sec_hypergraph}

Let $\D$ be a hypergraph category~\cite{carboni,kissinger,fong,FS}. This means in particular that each object $X \in \D$ comes equipped with a distinguished comultiplication $\cop_X : X \to X \otimes X$ and counit morphism $\del_X : X \to I$ satisfying~\cref{delcopyAB}. Now let $\C$ be the subcategory consisting of all morphisms which preserve the counit. In other words, $f : X \to Y$ is in $\C$ if and only if $\del_Y \circ f = \del_X$. Then it is not hard to see that $\C$ carries the structure of a Markov category.

\begin{ex}
A good example is $\FinRel$, the category of finite sets and relations, with its usual monoidal structure given by the cartesian product and the canonical hypergraph structure where the distinguished relation $X^n \to X^m$ relates a tuple $(x_1,\ldots,x_n)$ to a tuple $(x'_1,\ldots,x'_m)$ if and only if $x_1 = \ldots = x_n = x'_1 = \ldots = x'_m$, implementing both ``copying'' as a morphism $X \to X \otimes X$ and ``testing for equality'' as a morphism $X \otimes X \to I$.

Restricting $\FinRel$ to the counit-preserving morphisms recovers exactly $\FinSetMulti$ (\Cref{setmulti}).
\end{ex}

\begin{ex}
	A closely related example is the hypergraph category $\mathsf{Mat}(R)$ for a semiring $(R,+,\cdot)$. Its objects are finite sets, and morphisms $X \to Y$ are matrices $(f_{xy})_{x\in X, y\in Y}$ with $f_{xy} \in R$, which compose by matrix multiplication. The monoidal structure is given by cartesian product of sets and tensor product of matrices. If $R$ is the Boolean semiring $R = \{0,1\}$ with $1 + 1 := 1$, then $\mathsf{Mat}(R) \cong \FinRel$, so that this generalizes the previous example.

	Restricting to the counit-preserving morphisms results in a Markov category, namely the category of ``stochastic matrices'' with entries in $R$. In particular for $R = \R_+$, we obtain the category of stochastic matrices, which is equivalent to $\FinStoch$ (\Cref{finstoch}). Taking $R := ([0,1],\max,\min)$ or $R := ([0,1],\max,\cdot)$ recovers Golubtsov's two categories of \emph{fuzzy information transformers}~\cite[Section~8.5]{golubtsov2}, modulo the minor difference that our objects are merely finite instead of arbitrary sets.
\end{ex}

While the idea of imposing normalization in this way by restricting to the subcategory of counit-preserving morphisms is not new and can be done in somewhat greater generality (see e.g.~\cite[Section~7]{cho_jacobs}), we believe that applying this procedure in the particular case of hypergraph categories, such as the ones of~\cite{fong}, has the potential to produce further examples of Markov categories in which our results will have interesting instantiations.

\section{Example: categories of commutative comonoids}
\label{comons_gen}

Let $\D$ be a symmetric monoidal category, and let $\C$ be the category of commutative comonoids in $\D$ with morphisms those maps which preserve the counits. Hence an object in $\C$ is by definition a triple $(X,\mu_X,e_X)$ with $\mu_X : X \to X \otimes X$ and $e_X : X \to I$ in $\D$ satisfying the usual equations, while a $\C$-morphism $(X,\mu_X,e_X) \to (Y,\mu_Y,e_Y)$ is a $\D$-morphism $f : X \to Y$ such that $e_Y = e_X f$. The usual tensor product of commutative comonoids makes $\C$ into a symmetric monoidal category, where the symmetry is also the one inherited from $\D$. Every object $(X,\mu_X,e_X) \in \C$ now has a canonical commutative comonoid structure not only in $\D$, but even in $\C$, given by the structure maps $\cop_X := \mu_X$ and $\del_X := e_X$. We have therefore sketched the proof of:

\begin{prop}
	\label{comons}
	If $\D$ is a symmetric monoidal category, then the category of commutative comonoids in $\D$ with counit-preserving morphisms is a Markov category.
\end{prop}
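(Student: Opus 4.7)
The plan is to verify each piece of the Markov category structure in \Cref{markov_cat} directly, exploiting the fact that essentially all the required data is already packaged into the definitions of ``commutative comonoid in $\D$'' and ``counit-preserving morphism''.

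First I would recall or construct the symmetric monoidal structure on $\C$: the tensor product of two comonoids $(X,\mu_X,e_X)$ and $(Y,\mu_Y,e_Y)$ is $(X\otimes Y, \mu_{X\otimes Y}, e_{X\otimes Y})$, where the comultiplication is the composite
\[
	X\otimes Y \xrightarrow{\mu_X\otimes\mu_Y} (X\otimes X)\otimes (Y\otimes Y) \xrightarrow{\id\otimes\swap\otimes\id} (X\otimes Y)\otimes (X\otimes Y)
\]
and the counit is $e_X\otimes e_Y$ composed with the unitor $I\otimes I\to I$. The monoidal unit is $I$ equipped with the canonical trivial comonoid structure ($e_I=\id_I$, $\mu_I$ the inverse unitor). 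The associators, unitors, and symmetries are inherited from $\D$ and are morphisms of comonoids by standard coherence; this is a well-known fact about categories of commutative comonoids.

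Next I would take the distinguished comultiplication and counit on each object of $\C$ to be its own defining structure, $\cop_X:=\mu_X$ and $\del_X:=e_X$. The commutative comonoid equations~\cref{comonoid_ass} and \cref{comonoid_other} then hold by definition of commutative comonoid in $\D$. Compatibility with the monoidal structure~\cref{delcopyAB} is nothing but the way $\mu_{X\otimes Y}$ and $e_{X\otimes Y}$ were defined above on the tensor product comonoid. The only two points that require a separate line of argument are (i) that $\cop_X$ and $\del_X$ are themselves morphisms of $\C$, i.e.\ counit-preserving, and (ii) the naturality condition~\cref{counit_nat}. For (ii), the equation $\del_Y\circ f=\del_X$ is literally the condition defining which morphisms of $\D$ are allowed in $\C$, so naturality is automatic. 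For (i), we have $\del_I\circ\del_X=\id_I\circ e_X=e_X=\del_X$, and for $\cop_X$ one computes
\[
	\del_{X\otimes X}\circ \cop_X = (e_X\otimes e_X)\circ\mu_X = e_X,
\]
where the second equality follows by applying the counit axiom for $(X,\mu_X,e_X)$ once on each tensor factor, composed with the unitor $I\otimes I\xrightarrow{\cong}I$.

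I do not expect any real obstacle: the content of the proposition is essentially that the axioms of a Markov category coincide with the structure already present on the category of commutative comonoids once one restricts to counit-preserving morphisms. The only place where a reader might want to see detail is the verification that the tensor product structure on $\C$ satisfies~\cref{delcopyAB} in the form needed, which amounts to unwinding the definition of $\mu_{X\otimes Y}$ and $e_{X\otimes Y}$ and checking the coherence diagrams~\cref{deldiag} and~\cref{copydiag}; this is a routine naturality-plus-coherence computation in the symmetric monoidal category $\D$.
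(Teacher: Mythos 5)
Your proposal is correct and follows essentially the same route as the paper, which only sketches the argument: equip the category of commutative comonoids with the usual tensor product, set $\cop_X := \mu_X$ and $\del_X := e_X$, and observe that the counit-preservation condition on morphisms is exactly the naturality of $\del$. Your additional checks (that $\cop_X$ and $\del_X$ are themselves counit-preserving, and that \cref{delcopyAB} holds by construction of the tensor product comonoid) are precisely the routine verifications the paper leaves implicit.
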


One use of Markov categories of this type is that they can provide counterexamples to plausible-sounding conjectures, as for example in \Cref{isononiso,noncausal}.

\section{Deterministic morphisms and a strictification theorem}
\label{sec_det}

We now return to the development of the general theory, frequently illustrating it in some of the previous examples.

The reader may have noticed that \Cref{markov_cat} postulates the naturality of $\del$ but not of $\cop$, for which it would take the form~\cref{comult_natural}. Indeed, the reason is that this naturality does not hold in the examples of interest to us, such as $\FinStoch$ from \Cref{finstoch}, or more generally $\Stoch$, the example of measurable Markov kernels between measurable spaces from \Cref{sec_giry}. The simple intuitive reason is that applying a Markov kernel independently to two copies of a variable is different than applying the map first and then copying. In fact, those morphisms for which the naturality holds are of a very special kind:

\begin{defn}
	\label{defn_det}
	A morphism $f : X \to Y$ in a Markov category is \emph{deterministic} if it respects the comultiplication,
	\beq
		\label{comult_natural}
		\input{multiplication_natural.tikz}
	\eeq
\end{defn}

This definition goes back to the characterization of functions as comonoid-preserving relations given by Carboni and Walters~\cite[Lemma~2.5]{CW} and has also been used in the present form by Fong~\cite[Lemma~5.2]{causaltheories}. Deterministic morphisms have also been investigated as \emph{copyable} morphisms in the program semantics literature~\cite{thielecke,fuhrmann}. In the specific context of Markov kernels, our definition may first have been used in~\cite[Section~3.3]{CPP}, although there also are strong similarities with Golubtsov's earlier work~\cite[Eq.~(2)]{golubtsov2}.

In terms of our notion of conditional independence to be introduced in \Cref{ciprocdef}, this can also be understood as saying that the morphisms on the right-hand side displays the conditional independence $\condindproc{Y}{Y}{X}$. Intuitively, we can phrase this as saying that the random variable $f$ is independent of itself, which is a standard equivalent way to talk about deterministic events.

We denote the collection of deterministic morphisms by $\C_\det \subseteq \C$. As we will show in~\Cref{rem_det_cartesian}, $\C_\det$ is a symmetric monoidal subcategory of $\C$.

\begin{rem}
	\label{comon_det}
	The commutative comonoid axioms of \Cref{markov_cat} imply that the structure maps $\del_X : X \to I$ and $\cop_X : X \to X \otimes X$ are themselves deterministic.
\end{rem}

\begin{ex}
	\label{finstoch_det}
	It is easy to see that in $\FinStoch$, the deterministic maps are exactly the usual ones, i.e.~those stochastic matrices $f$ where each entry $f_{xy}$ is either $0$ or $1$. It follows that the deterministic morphisms form a symmetric monoidal subcategory which is precisely the image of the inclusion $\FinSet \to \FinStoch$.
\end{ex}

\begin{ex}
	\label{stoch_det}
	Let us consider deterministic morphisms in $\Stoch$, the category of measurable spaces and Markov kernels (\Cref{sec_giry}). For $f : (X,\Sigma_X) \to (Y,\Sigma_Y)$, the left-hand side of the determinism equation is represented by the map
	\[
		(\Sigma_Y \otimes \Sigma_Y) \times X \longrightarrow [0,1], \qquad (S \times T,x) \longmapsto f(S|x) f(T|x),
	\]
	where we have specified the map only on generating rectangles $S \times T \in (\Sigma_Y \otimes \Sigma_Y)$, which is enough. Similarly, the right-hand side of the determinism equation is
	\[
		(\Sigma_Y \otimes \Sigma_Y) \times X \longrightarrow [0,1], \qquad (S \times T,x) \longmapsto f(S \cap T|x).
	\]
	So if the left-hand side and right-hand side are equal, then we must have $f(S|x)^2 = f(S|x)$ for all $S \in \Sigma_Y$, which implies that every measure $f(-|x) : \Sigma_Y \to [0,1]$ is actually $\{0,1\}$-valued. Conversely, \Cref{stoch_equal} and additivity of $f$ in the first argument show that this is enough. Hence the deterministic morphisms in $\Stoch$ are precisely those Markov kernels which land in $\{0,1\}$-valued measures.
	
	In general, a $\{0,1\}$-valued measure does \emph{not} need to be a delta measure; a standard example is given by the countable-cocountable $\sigma$-algebra on an uncountable set $Y$, with a $\{0,1\}$-valued measure which assigns measure $1$ to precisely the cocountable sets. So although the deterministic morphisms are again precisely those Markov kernels which do not involve any randomness, the subcategory of deterministic morphisms does not coincide with $\Meas$, the category of measurable spaces: the induced functor from $\Meas$ to the deterministic morphisms in $\Stoch$ is not full. And it is not even faithful either, although for a more stupid reason: if $\Sigma_Y$ does not separate the points of $Y$, then there will be different measurable maps $1 \to Y$ whose induced delta measures $\Sigma_Y \to \{0,1\}$ are the same. As is well-known, both pathologies can be avoided by imposing suitable restrictions on the measurable spaces involved. Namely if $\Sigma_Y$ is countably generated and separates the points of $Y$, then the deterministic morphisms $(X,\Sigma_X) \to (Y,\Sigma_Y)$ are exactly the measurable maps, as can be seen as follows. The map $y \mapsto \delta_y$ identifies the elements of $Y$ with the $\{0,1\}$-valued probability measures $\Sigma_Y \to [0,1]$ by~\cite[p.~14]{raorao}, and the measurability condition on $f(S|-) : X \to [0,1]$ for $S \in \Sigma_Y$ then becomes precisely the requirement that
	\[
		\{ x\in X \mid f(S|x) = 1 \}\, \in\, \Sigma_X,
	\]
	which amounts to the condition that the representing map $X \to Y$ must be measurable. Hence the functor from $\Meas$ to deterministic morphisms in $\Stoch$ is fully faithful on morphisms into $(Y,\Sigma_Y)$ as soon as $\Sigma_Y$ is countably generated and separating, but in general neither full nor faithful. We take this as evidence for the hypothesis that $\Stoch$ is not actually the ``best'' Markov category for measure-theoretic probability. (See also \Cref{sacksteder_stoch,dawid_stoch}.)
\end{ex}

\begin{ex}
	\label{borelstoch_det}
	The situation is much better in the full subcategory $\BorelStoch \subseteq \Stoch$. Thinking of a standard Borel space as a Polish space, noting that every probability measure on a Polish space has a support implies that a $\{0,1\}$-valued measure is indeed a delta measure. Furthermore using the fact that the relevant Borel $\sigma$-algebras separate points, it follows that the subcategory of deterministic morphisms is isomorphic to the category of standard Borel spaces and measurable maps.
\end{ex}

\begin{rem}
	Consider more generally the setting of \Cref{affine_monoidal_monad}. There, it is easy to see that the image of every deterministic morphism in $\D$ is also deterministic in $\Kl{T}$. But as we have seen in \Cref{stoch_det}, the functor $\D \to \Kl{T}$ need not be faithful, and it may also not be full on deterministic morphisms. In other words, deterministic morphisms and morphisms of $\D$ must be carefully distinguished.
\end{rem}

\begin{ex}
	\label{gauss_det}
	In $\Gauss$, a morphism $(M,C,s) : m \to n$ is deterministic if and only if the morphism
	\[
		\left( \left( \begin{matrix} 1_n \\ 1_n \end{matrix} \right), 0, 0 \right) \circ (M,C,s) = \left( \left( \begin{matrix} M \\ M \end{matrix} \right), \left( \begin{matrix} C & C \\ C & C \end{matrix} \right), \left( \begin{matrix} s \\ s \end{matrix} \right) \right)
	\]
	is equal to the morphism
	\[
		\left( (M,C,s) \otimes (M,C,s) \right) \circ \left( \left( \begin{matrix} 1_m \\ 1_m \end{matrix} \right), 0, 0 \right) =  \left( \left( \begin{matrix} M \\ M \end{matrix} \right), \left( \begin{matrix} C & 0 \\ 0 & C \end{matrix} \right), \left( \begin{matrix} s \\ s \end{matrix} \right) \right).
	\]
	Therefore $(M,C,s)$ is deterministic if and only if $C = 0$, which indeed means if and only if there is no randomness involved.
\end{ex}

\begin{ex}
	In a category of commutative comonoids and counit-preserving morphisms (\Cref{comons}), the deterministic morphisms are precisely the comonoid homomorphisms.
\end{ex}

Let us return to the general theory. Deterministic morphisms in a Markov category have the following in common with epimorphisms in any category:

\begin{lem}
	\label{deterministic_composition}
	Let $f : X \to Y$ and $g : Y \to Z$. Then:
	\begin{enumerate}
		\item\label{det_comp} If $f$ and $g$ are deterministic, then so is $gf$.
		\item\label{2of3} If $gf$ is deterministic and $f$ is a deterministic epimorphism, then $g$ is deterministic as well.
		\item\label{inverseisos} If $f$ and $g$ are inverse isomorphisms and if one of them is deterministic, then so is the other.
	\end{enumerate}
\end{lem}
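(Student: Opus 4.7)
For part \ref{det_comp}, the plan is a direct string-diagram substitution: write $\cop_Z \circ g \circ f$, rewrite using determinism of $g$ to get $(g \otimes g) \circ \cop_Y \circ f$, and then use determinism of $f$ to turn this into $(g \otimes g) \circ (f \otimes f) \circ \cop_X = ((gf) \otimes (gf)) \circ \cop_X$, which is the determinism equation for $gf$. The bifunctoriality of $\otimes$ is what makes the last step legitimate.

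For part \ref{2of3}, I would again start from the determinism equation for $gf$ but read it in the other direction. By assumption,
\[
    \cop_Z \circ g \circ f \;=\; ((gf) \otimes (gf)) \circ \cop_X \;=\; (g \otimes g) \circ (f \otimes f) \circ \cop_X.
\]
Now I rewrite the right-hand side using the determinism of $f$ to obtain
\[
    \cop_Z \circ g \circ f \;=\; (g \otimes g) \circ \cop_Y \circ f.
\]
Viewing this as $(\cop_Z \circ g) \circ f = ((g \otimes g) \circ \cop_Y) \circ f$ and invoking the epimorphism hypothesis on $f$ lets us cancel $f$ on the right, leaving precisely the determinism equation for $g$. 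The only subtle point here is that the hypothesis that $f$ is an epimorphism is used in an essential way; without it, the cancellation fails, which is consistent with the analogy to the corresponding two-out-of-three property for epimorphisms.

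For part \ref{inverseisos}, I would deduce it from part \ref{2of3} rather than redoing a string-diagram computation. Identity morphisms are deterministic, since $\cop_X \circ \id_X = \cop_X = (\id_X \otimes \id_X) \circ \cop_X$. Isomorphisms are epimorphisms. So if $f$ is deterministic with inverse $g$, then $gf = \id_X$ is deterministic and $f$ is a deterministic epimorphism, whence part \ref{2of3} applied to this composite forces $g$ to be deterministic. Symmetrically, if $g$ is the one assumed deterministic, applying part \ref{2of3} to the composite $fg = \id_Y$ yields that $f$ is deterministic.

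The only part that requires real thought is \ref{2of3}, and specifically the realization that one must separate the factor whose determinism is assumed from the factor to be tested, and then rely on an epimorphism assumption to cancel. Parts \ref{det_comp} and \ref{inverseisos} are routine once \ref{2of3} is in hand.
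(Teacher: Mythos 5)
Your proof is correct and, for the essential part \ref{2of3}, follows exactly the same computation as the paper: rewrite $\cop_Z\, g f$ via determinism of $gf$ and then of $f$ into $(g\otimes g)\,\cop_Y\, f$, and cancel the epimorphism $f$. The paper leaves \ref{det_comp} and \ref{inverseisos} to the reader as "even simpler"; your direct substitution for \ref{det_comp} and your derivation of \ref{inverseisos} from \ref{2of3} (using that identities are deterministic and isomorphisms are epimorphisms) are both valid.
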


\begin{proof}
	We show~\ref{2of3}, leaving the even simpler proofs of~\ref{det_comp} and~\ref{inverseisos} to the reader. We compute
	\[
		\input{2of3.tikz}
	\]
	and now use the epimorphism property to cancel $f$.
\end{proof}

We dedicate the remainder of this section to formal categorical considerations. These address some important technical issues which turn out to be surprisingly subtle. Readers who are primarily interested in our treatment of conditional independence and statistics instead can safely move on to \Cref{furtheraxioms}.

The following observation is the main reason for why categorical subtleties arise.

\begin{rem}
	\label{isononiso}
	There are Markov categories $\C$ which contain isomorphisms that are not deterministic.

	We can build concrete examples of this phenomenon based on the construction of Markov categories $\C$ as categories of comonoids in symmetric monoidal categories $\D$ via \Cref{comons}. The idea is the same object $X \in \D$ may support two comonoid structures with the same counit $e_X : X \to I$ but different comultiplications 
	\[
		\mu_X,\,\mu'_X \: : \: X \to X \otimes X,
	\]
	resulting in a non-deterministic isomorphism
	\[
		\id_X : (X,\mu_X,e_X) \to (X,\mu'_X,e_X).
	\]
	Here, it may or may not be the case that the two comonoids $(X,\mu_X,e_X)$ and $(X,\mu'_X,e_X)$ are isomorphic in some other way.

	More concretely, $\D$ may be the category of finite-dimensional vector spaces over a field $k$ equipped with the usual tensor product of vector spaces. Then since this $\D$ is monoidally equivalent to its own opposite, we can identify $\C$ with the opposite of the category of finite-dimensional commutative $k$-algebras together with $k$-linear unital maps. If we restrict to the $k$-algebras of the form $k^n$ (with componentwise operations), then our $\D$ becomes equivalent to the category of ``stochastic matrices'' with entries in $k$, where a stochastic matrix is $(f_{xy})_{x\in X,y\in Y} \in k^{X\times Y}$ with $\sum_y f_{xy} = 1$ as in \Cref{finstoch}. Equivalently, this $\D$ is the full subcategory of the Kleisli category of the $k$-affine space monad on $\Set$ with finite sets as objects. But without the restriction to $k$-algebras of this form, the category is significantly larger, and contains for example algebras like $k[X]/\langle X^2 \rangle$. This algebra is isomorphic to $k^2$ in $\C$, even in a number of different ways, but none of these isomorphisms is deterministic because $k[X]/\langle X^2\rangle$ and $k^2$ are not isomorphic as $k$-algebras.
\end{rem}

\begin{rem}
	We expect that the remainder of this section will also apply, after some straightforward modifications, in the context of \emph{hypergraph categories}~\cite{kissinger,fong,FS}, and perhaps more generally to symmetric monoidal categories \emph{supplied} with extra structure~\cite{FS_new}.
\end{rem}

\begin{lem}
	\label{structure_det}
	The subcategory of deterministic morphisms $\C_\det \subseteq \C$ is closed under tensor product and contain all structure maps, meaning the associators, unitors, $\del$, $\cop$, and $\swap$.
\end{lem}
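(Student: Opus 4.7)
The plan is to handle the two claims separately, treating closure under tensor first since it uses only the compatibility equation \cref{copydiag} together with naturality of the symmetry.

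For closure under tensor, suppose $f : X \to Y$ and $g : A \to B$ are deterministic. By \cref{copydiag}, the comultiplication $\cop_{X \otimes A}$ decomposes as $\cop_X \otimes \cop_A$ followed by the ``middle swap'' $\id \otimes \swap \otimes \id$ (with suitable associators), and similarly for $\cop_{Y \otimes B}$. Starting from $\cop_{Y \otimes B} \circ (f \otimes g)$, I would expand via \cref{copydiag}, slip $(f \otimes g)$ past $\cop_Y \otimes \cop_B$ using determinism of $f$ and $g$ separately, and then commute the four-fold tensor $f \otimes f \otimes g \otimes g$ with the middle swap by naturality of $\swap$. Reassembling via \cref{copydiag} in the reverse direction produces $((f \otimes g) \otimes (f \otimes g)) \circ \cop_{X \otimes A}$, which is exactly the determinism condition for $f \otimes g$.

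That $\cop_X$ and $\del_X$ are themselves deterministic is flagged in \Cref{comon_det}. Explicitly, for $\cop_X$ one expands $\cop_{X \otimes X}$ via \cref{copydiag}, reducing its determinism to the statement that the four-fold copy $X \to X \otimes X \otimes X \otimes X$ is invariant under the middle swap; this is immediate from coassociativity together with cocommutativity (the second equation of \cref{comonoid_other}). For $\del_X$, both $(\del_X \otimes \del_X) \circ \cop_X$ and $\cop_I \circ \del_X$ reduce to $\del_X$ composed with the coherence isomorphism $I \to I \otimes I$: the former by two applications of the counit law from \cref{comonoid_other}, the latter because $\cop_I$ equals that coherence isomorphism, as observed in the remark following \cref{delcopyAB}.

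For the remaining structure maps --- associators, unitors, and symmetries $\swap_{X,Y}$ --- the same recipe applies: expand $\cop$ on both the source and the target via \cref{copydiag} so that each side of the determinism equation becomes a composite of copies, tensors, and structure isomorphisms, after which the equality reduces to a coherence identity in the underlying symmetric monoidal category $\C$. For the symmetry in particular, both $\cop_{Y \otimes X} \circ \swap_{X,Y}$ and $(\swap_{X,Y} \otimes \swap_{X,Y}) \circ \cop_{X \otimes Y}$ unfold to the same reordering of $\cop_X \otimes \cop_Y$. The main obstacle I anticipate is not any one computation but the bookkeeping of associators and unitors in the non-strict setting; the arguments are most transparent as string-diagram manipulations in the strict case, while the general case follows by the same manipulations together with coherence.
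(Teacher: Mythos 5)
Your proposal is correct and follows essentially the same route as the paper: closure under tensor and determinism of $\swap$ come from \cref{delcopyAB} together with naturality of the symmetry, $\del$ and $\cop$ from the comonoid axioms, the unitors from \cref{copydiag} with one tensor factor equal to $I$, and the associator by expanding both comultiplications via \cref{copydiag}, pushing the copies past the structure maps by naturality of the associator, and closing with symmetric monoidal coherence. The only difference is one of explicitness: the paper writes out the associator case (which it flags as the trickiest) as a large commutative diagram whose final step is exactly the coherence appeal you defer to, so your outline omits no idea.
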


See the proof of~\cite[Theorem~3.12]{FS_new} for an independently obtained similar result in a more general setup for symmetric monoidal categories with extra ``bells and whistles''.

\begin{proof}
	The first claim is a direct consequence of~\cref{delcopyAB}. That the structure maps are deterministic is easiest to see for every $\del_X$ and $\cop_X$ (\Cref{comon_det}). For $\swap_{X,Y} : X \otimes Y \to Y \otimes X$, we only need~\cref{delcopyAB} and the naturality of the symmetry isomorphisms,
	\[
		\input{swap_deterministic.tikz}
	\]
	The unitors are deterministic thanks to~\cref{copydiag} applied with one of the two tensor factors being $I$ together with $\cop_I$ being equal to the unitor $I \cong I \otimes I$. Finally, the problem is trickiest for an associator $(X\otimes Y) \otimes Z \stackrel{\cong}{\longrightarrow} X \otimes (Y \otimes Z)$. For this case, we denote the composite vertical isomorphism $(X \otimes Y) \otimes (X \otimes Y) \longrightarrow (X \otimes X) \otimes (Y \otimes Y)$ of~\cref{copydiag} by $\mathsf{gr}_{X,Y}$. Writing ``$;$'' instead of ``$\otimes$'' for brevity, we consider the diagram
	\[
		\begin{tikzcd}[column sep=1.2cm,row sep=1.2cm]
			(X ; Y) ; Z \ar[equal]{d} \ar["\cop_{(X; Y) ; Z}"]{rrr} & & & ( (X ; Y) ; Z) ; ( (X; Y); Z ) \ar["\mathsf{gr}_{(X;Y),Z}"]{d} \\
			(X ; Y) ; Z \ar[equal]{d} \ar["\cop_{X;Y} ; \id"]{rr} & & ( (X; Y) ; (X ; Y) ) ; Z  \ar["\id ; \cop_Z"]{r} \ar["\mathsf{gr}_{X,Y} ; \id"]{d} & ( (X; Y) ; (X; Y) ) ; (Z ; Z) \ar["\mathsf{gr}_{X,Y} ; \id"]{d} \\
			(X ; Y) ; Z \ar["(\id ; \cop_Y) ; \id"]{r} \ar["\cong"]{d} & ( X ; (Y ; Y)) ; Z \ar["(\cop_X ; \id) ; \id"]{r} \ar["\cong"]{d} & ( (X ; X) ; (Y ; Y) ) ; Z \ar["\cong"]{d} \ar["\id ; \cop_Z"]{r} & ((X ; X) ; (Y ; Y)) ; (Z ; Z) \ar["\cong"]{d} \\
			X ; (Y ; Z) \ar[equal]{d} \ar["\id ; (\cop_Y ; \id)"]{r} & X ; ( (Y ; Y) ; Z) \ar[equal]{d} \ar["\cop_X ; \id"]{r} & (X ; X) ; ( (Y ; Y) ; Z) \ar["\id;(\id;\cop_Z)"]{r} & (X ; X) ; ( (Y; Y) ; (Z ; Z)) \ar[equal]{d} \\
			X ; (Y ; Z) \ar["(\id ; \cop_Y) ; \id"]{r} \ar[equal]{d} & X ; ( (Y; Y) ; Z) \ar["\id ; (\id; \cop_Z)"]{r} & X ; ( (Y ; Y) ; (Z ; Z)) \ar["\id;\mathsf{gr}^{-1}_{Y,Z}"]{d} \ar["\cop_X;\id"]{r} & (X ; X) ; ( (Y; Y) ; (Z ; Z)) \ar["\id;\mathsf{gr}^{-1}_{Y,Z}"]{d} \\
			X ; (Y ; Z) \ar[equal]{d} \ar["\id ; \cop_{Y;Z}"]{rr} & & X ; ( (Y ; Z) ; (Y ; Z) ) \ar["\cop_X ; \id"]{r} & (X ; X) ; ( (Y; Z) ; (Y ; Z)) \ar["\mathsf{gr}_{X;(Y;Z)}"]{d} \\
			X ; (Y ; Z) \ar["\cop_{X;(Y;Z)}"]{rrr} & & & (X ; (Y ; Z)) ; (X ; (Y ; Z))
		\end{tikzcd}
	\]
	Each square commutes either by standard properties of symmetric monoidal categories, including naturality of the associator, or as an instance of \cref{copydiag}. Note that the bottom three rows of the diagram are somewhat similar to the top three. Commutativity of this diagram establishes the claim: the left vertical composite is obviously just the associator, while the right vertical composite is the tensor product of the associator with itself by the coherence theorem for symmetric monoidal categories, where applying this requires one to distinguish the two factors of $X$ on the right, and likewise for $Y$ and $Z$, noting that their order is the same at the bottom as it is on top.
\end{proof}

\begin{rem}
	\label{rem_det_cartesian}
	Since we already know that the deterministic morphisms are closed under composition and include all structure maps, \Cref{structure_det} therefore concludes that they form a symmetric monoidal subcategory $\C_\det \subseteq \C$. Using the well-known fact that having a natural comonoid structure on every object of a symmetric monoidal category implies that the monoidal structure is cartesian (e.g.~\cite{HV}), we see that $\C_\det$ is cartesian monoidal. We will make judicious use of this observation below. It provides the basis for relating our \Cref{markov_cat} to Golubtsov's formulation in terms of cartesian monoidal structure~\cite{golubtsov2}.
	
	Note also that every monoidal product $X\otimes Y$ in $\C$, when equipped with the two projections
	\[
		\id_X \otimes \del_Y : X\otimes Y \longrightarrow X, \qquad \del_X \otimes \id_Y : X\otimes Y \longrightarrow Y,
	\]
	always satisfies the \emph{existence} part of the universal property of a product by~\cref{cond_prod_proc}. The uniqueness part hinges on the naturality of $\cop$.
\end{rem}

\begin{defn}
	\label{cacom}
	The collection of Markov categories makes up a 2-category $\Markov$, where:
	\begin{itemize}
		\item objects are Markov categories;
		\item $1$-morphisms $F : \C \to \D$ are \emph{Markov functors}, by which we mean strong symmetric monoidal functors which preserve the comonoids in the sense that the diagram
			\beq
				\label{comonoidfun}
				\begin{tikzcd}
					& FX \ar[swap,"\cop_{FX}"]{dl} \ar["F(\cop_X)"]{dr} \\
					FX \otimes FX \ar["\cong"]{rr} & & F(X \otimes X)
				\end{tikzcd}
			\eeq
			commutes for every $X \in \C$, where the horizontal arrow is the relevant structure morphism of $F$;
		\item $2$-morphisms $t : F \Rightarrow G$ for $F,G : \C \to \D$ are monoidal natural transformations with deterministic components.
	\end{itemize}
	A Markov functor which is faithful is also called a \emph{Markov embedding}.
\end{defn}

For both $1$-morphisms and $2$-morphisms, one might also expect the obvious compatibility conditions with the counits to be required. These hold automatically due to the terminality of the monoidal unit, and we therefore do not spell them out. Another relevant observation is that the monoidal structure isomorphisms $FX \otimes FY \stackrel{\cong}{\longrightarrow} F(X \otimes Y)$ are automatically deterministic~\cite[Theorem~4.6]{FS_new}.

\begin{lem}
	Let $F : \C \to \D$ be a comonoid functor. Then $F$ takes deterministic morphisms to deterministic morphisms, giving a functor $F_\det : \C_\det \to \D_\det$. Morever, $F_\det$ preserves finite products.
\end{lem}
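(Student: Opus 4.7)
The plan is to verify the three assertions in order: (i) $F$ sends deterministic morphisms to deterministic morphisms, (ii) the induced assignment $F_\det$ is functorial, and (iii) $F_\det$ preserves finite products. The core observation throughout is that the comonoid-functor condition~\cref{comonoidfun} together with naturality of the monoidal coherence isomorphism $\phi_{X,Y} : FX \otimes FY \to F(X \otimes Y)$ allows one to transport the determinism equation~\cref{comult_natural} from $\C$ to $\D$. This is the only nontrivial input; the remaining work is bookkeeping.

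For (i), let $f : X \to Y$ be deterministic. I would compute $\cop_{FY} \circ F(f)$ by first using~\cref{comonoidfun} to rewrite $\cop_{FY} = \phi_{Y,Y}^{-1} \circ F(\cop_Y)$, then applying functoriality of $F$ and the hypothesis $\cop_Y \circ f = (f \otimes f) \circ \cop_X$ to obtain $\phi_{Y,Y}^{-1} \circ F(f \otimes f) \circ F(\cop_X)$, and finally using naturality of $\phi$ in the form $F(f \otimes f) \circ \phi_{X,X} = \phi_{Y,Y} \circ (F(f) \otimes F(f))$ to rewrite this as $(F(f) \otimes F(f)) \circ \phi_{X,X}^{-1} \circ F(\cop_X) = (F(f) \otimes F(f)) \circ \cop_{FX}$, again by~\cref{comonoidfun}. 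This is precisely the determinism equation for $F(f)$. Claim (ii) is then essentially automatic: $F$ preserves composition and identities, identity morphisms are trivially deterministic by~\cref{comult_natural}, and (i) ensures the restriction lands in $\D_\det$, so $F_\det : \C_\det \to \D_\det$ is a well-defined functor.

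For (iii), recall from \Cref{rem_det_cartesian} that $\C_\det$ and $\D_\det$ are cartesian monoidal, with terminal object $I$, binary products $X \otimes Y$, and projections $\id_X \otimes \del_Y$ and $\del_X \otimes \id_Y$. Preservation of the terminal object follows because $F$ is strong monoidal, so $F(I_\C) \cong I_\D$, and any object isomorphic to a terminal object is itself terminal. For binary products, strong monoidality supplies an isomorphism $\phi_{X,Y} : FX \otimes FY \to F(X \otimes Y)$, so it suffices to check that $F$ sends the projection $\id_X \otimes \del_Y$ to $\id_{FX} \otimes \del_{FY}$ after transport along $\phi$. By naturality of $\phi$ we have $F(\id_X \otimes \del_Y) \circ \phi_{X,Y} = \phi_{X,I_\C} \circ (\id_{FX} \otimes F(\del_Y))$, and composing further with the structure isomorphism $F(I_\C) \cong I_\D$ identifies $F(\del_Y)$ with $\del_{FY}$ by terminality of $I_\D$ in $\D$ (and hence in $\D_\det$). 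The main obstacle, if any, is simply tracking these coherence isomorphisms carefully; there is no deeper conceptual difficulty once the terminality argument is in place.
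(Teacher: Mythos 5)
Your proof is correct and supplies exactly the routine verification that the paper compresses into the single word ``Straightforward'': transport of the determinism equation along the comonoid-functor condition and naturality of the monoidal structure maps, followed by the observation that $F$ carries the terminal object and the projections $\id \otimes \del$ to their counterparts up to coherence. The only step you leave tacit is that the structure isomorphisms $FX \otimes FY \to F(X \otimes Y)$ are themselves deterministic, so that transporting the product along them really stays inside $\D_\det$; the paper records that this is automatic in the remark following \Cref{cacom}.
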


\begin{proof}
	Straightforward.		
\end{proof}

We say that $F : \C \to \D$ is a \emph{comonoid equivalence} if it is an equivalence in the 2-category $\Markov$. The following result provides a convenient way of recognizing comonoid equivalences, analogous to the standard fact that a functor is an equivalence of categories if and only if it is fully faithful and essentially surjective.

\begin{prop}
	\label{equiv_crit}
	A symmetric monoidal functor $F : \C \to \D$ is a comonoid equivalence if and only if it is an equivalence of categories, and for every object $Y \in \D$ there is $X \in \C$ and an isomorphism $i : Y \stackrel{\cong}{\to} FX$ which is deterministic, meaning that the diagram
	\[
		\begin{tikzcd}
			Y \ar{r}{i}[swap]{\cong} \ar["\cop_Y",swap]{d} & FX \ar["\cop_{FX}"]{d} \\
			Y \otimes Y \ar{r}{i\otimes i}[swap]{\cong} & FX \otimes FX
		\end{tikzcd}
	\]
	commutes.
\end{prop}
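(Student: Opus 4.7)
\textbf{Plan for the proof of Proposition \ref{equiv_crit}.}

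For the forward direction, suppose $F$ is a comonoid equivalence, with pseudo-inverse $G$ and 2-isomorphisms $\eta : \id_\C \Rightarrow GF$ and $\epsilon : FG \Rightarrow \id_\D$. By the very definition of $\Markov$, each component of $\eta$ and $\epsilon$ is deterministic. Forgetting the 2-categorical structure immediately shows that $F$ is an equivalence of ordinary categories. For any $Y \in \D$, take $X := GY$ and $i := \epsilon_Y^{-1} : Y \to FX$; this is an isomorphism, and it is deterministic by \Cref{deterministic_composition}\ref{inverseisos}, since $\epsilon_Y$ is.

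For the reverse direction, assume $F$ is an equivalence of categories (so in particular fully faithful) and use the hypothesis to choose, for each $Y \in \D$, an object $X_Y \in \C$ together with a deterministic isomorphism $i_Y : Y \stackrel{\cong}{\to} FX_Y$. Set $GY := X_Y$. Since $F$ is fully faithful, for every morphism $f : Y \to Y'$ in $\D$ there is a unique $Gf : GY \to GY'$ with $F(Gf) = i_{Y'} \circ f \circ i_Y^{-1}$; this makes $G$ a functor, and the family $\epsilon_Y := i_Y^{-1}$ is a natural iso $FG \Rightarrow \id_\D$ with deterministic components. Similarly, define the unit $\eta_X : X \to GFX$ as the unique morphism with $F(\eta_X) = i_{FX}$. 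Since $i_{FX}$ is an iso, so is $\eta_X$; to see that $\eta_X$ is deterministic, it suffices to note that a fully faithful Markov functor reflects determinism (one unfolds the determinism equation, uses the comonoid-preservation diagram~\cref{comonoidfun} and strong monoidality of $F$ to cancel the structure isomorphisms, and then invokes fully faithfulness). Since $i_{FX}$ is deterministic by assumption, $\eta_X$ is as well.

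To equip $G$ with strong symmetric monoidal structure, define the coherence isomorphism $\psi_{Y,Y'} : GY \otimes GY' \to G(Y \otimes Y')$ as the unique morphism with
\[
	F(\psi_{Y,Y'}) \;=\; i_{Y \otimes Y'} \circ (i_Y^{-1} \otimes i_{Y'}^{-1}) \circ \varphi_{GY,GY'}^{-1},
\]
where $\varphi$ denotes the coherence isomorphisms of $F$; the right-hand side is an iso, so $\psi_{Y,Y'}$ is too. The unit isomorphism $I \to GI$ is defined analogously. Naturality of $\psi$ and all monoidal coherence diagrams (associativity, unit, symmetry) reduce, upon applying $F$, to the naturality/coherence of $\varphi$ combined with naturality of the $i_Y$'s and of $\otimes$, and hold by fully faithfulness. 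For the comonoid-preservation condition~\cref{comonoidfun}, we must show $\psi_{Y,Y} \circ \cop_{GY} = G(\cop_Y)$; applying $F$ and expanding both sides using the definition of $\psi$, of $G$ on morphisms, and the fact that $F(\cop_{GY}) = \varphi_{GY,GY} \circ \cop_{FGY}$, the required identity collapses to $(i_Y^{-1} \otimes i_Y^{-1}) \circ \cop_{FGY} = \cop_Y \circ i_Y^{-1}$, which is precisely the assertion that $i_Y^{-1}$ is deterministic. This is the one place where the hypothesis of determinism of the $i_Y$'s is genuinely used, and it is the main (albeit modest) obstacle.

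Finally, the triangle identities and monoidality of $\eta$ and $\epsilon$ are checked by applying $F$ and invoking fully faithfulness together with the definitions of $\psi$, $\eta$, and $\epsilon$; there are no further subtleties beyond careful bookkeeping. This upgrades the pair $(\eta,\epsilon)$ to 2-morphisms in $\Markov$, exhibiting $F$ as a comonoid equivalence.
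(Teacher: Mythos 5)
Your proposal is correct and follows essentially the same route as the paper: build a quasi-inverse $G$ from the chosen deterministic isomorphisms $i_Y$, transfer the monoidal structure, observe that the components of $\eps$ are deterministic by construction and those of $\eta$ by faithfulness (since $F\eta_X = \eps_{FX}^{-1} = i_{FX}$ is deterministic), and reduce the comonoid-preservation of $G$ to the determinism of the $i_Y$. The only difference is that you carry out the standard constructions of $G$ and its monoidal structure explicitly, where the paper cites Mac Lane's adjoint-equivalence theorem and a strictification reference for those steps.
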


\begin{proof}
	If $F$ is an equivalence in $\Markov$ with inverse equivalence $G : \D \to \C$, then we can take $X := GY$, and the square commutes because the given natural isomorphism $FG \cong \id_\D$ is required to have deterministic components.

	Conversely, choose for every object $Y \in \D$ an object $X \in \C$ together with $i : Y \stackrel{\cong}{\to} FX$ which makes the square commute. Then it is a standard fact that these choices determine an equivalence $g : \D \to \C$ sending every $Y \in \D$ to the corresponding $X \in \C$, and being right adjoint to $F$ via $\eps : FG \cong \id_\D$ and $\eta : \id_\C \cong GF$~\cite[Theorem~IV.4.1]{maclane}. Moreover, $G$ has a unique monoidal structure which makes it into a monoidal equivalence weakly inverse to $F$~\cite[4.4.2.1]{SR}. Now the natural isomorphism $\eps : FG \cong \id_\D$ has deterministic components by construction. We show that same holds true for $\eta : \id_\C \cong GF$. Since $F$ is faithful, it is enough to prove commutativity of the relevant diagrams after application of $F$. In other words, we only need to show that
	\[
		\begin{tikzcd}[column sep=1.5cm]
			FX \ar[swap,"\cop_{FX}"]{d} \ar["F\eta_X"]{r} & FGFX \ar["\cop_{FGFX}"]{d} \\
			FX \otimes FX \ar["\cong",swap]{d} \ar["F\eta_X \otimes F\eta_X"]{r} & FGFX \otimes FGFX \ar["\cong"]{d} \\
			F(X \otimes X) \ar["F(\eta_X \otimes \eta_X)"]{r} & F(GFX \otimes GFX) 
		\end{tikzcd}
	\]
	commutes for every $X \in \C$. Since the lower square commutes by naturality, it remains to be shown that $F\eta_X$ is deterministic. But this is because $F\eta_X$ is inverse to $\eps_{FX}$ by the triangle identities for adjoints, and we already know that $\eps_{FX}$ is deterministic.
	
	We still need to prove that $G$ preserves the comonoids. We first show this for objects of the form $FX \in \D$, where we have
	\[
		\begin{tikzcd}[column sep=1.5cm]
			GFX \ar[swap,"G(\cop_{FX})"]{d} \ar[equal]{r} & GFX \ar[swap,"GF(\cop_X)"]{d} \ar["\eta^{-1}_X"]{r} & X \ar["\cop_X"]{d} \ar["\eta_X"]{r} & GFX \ar["\cop_{GFX}"]{d} \\
			G(FX \otimes FX) \ar["G(\cong)"]{r} & GF(X \otimes X) \ar["\eta^{-1}_{X\otimes X}"]{r} & X \otimes X \ar["\eta_X \otimes \eta_X"]{r} & GFX \otimes GFX 
		\end{tikzcd}
	\]
	where the top composite is $\id_{GFX}$ as desired, and the bottom composite is the desired structure isomorphism $G(FX \otimes FX) \cong GFX \otimes GFX$ thanks to the monoidality of $\eta$. The left square commutes since $F$ is a comonoid functor, the middle square by naturality of $\eta^{-1}$, and the right one since $\eta_X$ is deterministic. The general case reduces to this one via $i : Y \stackrel{\cong}{\to} FX$ and the diagram
	\[
		\begin{tikzcd}[column sep=1.7cm]
			GY \ar[swap,"G(\cop_Y)"]{d} \ar["Gi"]{r} & GFX \ar["G(\cop_{FX})"]{d} \ar[equal]{r} & GFX \ar["\cop_{GFX}"]{d} \ar["G(i^{-1})"]{r} & GY \ar["\cop_{GY}"]{d} \\
			G(Y \otimes Y) \ar["G(i\otimes i)"]{r} & G(FX \otimes FX) \ar["\cong"]{r} & GFX \otimes GFX \ar{r} \ar["G(i^{-1}) \otimes G(i^{-1})"]{r} & GY \otimes GY
		\end{tikzcd}
	\]
	where the top composite is simply the desired $\id_{GY}$, and the bottom composite is the desired structure isomorphism $G(Y \otimes Y) \cong GY \otimes GY$. The left square is the $G$-image of a commuting squares by the assumption that $i$ is deterministic, the middle square commutes as per the above, and the right square because $G(i^{-1})$ is deterministic by virtue of being equal to $G\eps_Y$, which is the inverse of the deterministic morphism $\eta_{GY}$.
\end{proof}

The following theorem is analogous to an existing strictification theorem for hypergraph categories due to Fong and Spivak~\cite[Theorem~1.1]{FS}, whereas our proof has the additional feature that it uses Mac Lane's strictification theorem as a mere black box. It has inspired Fong and Spivak to prove a more general strictification theorem for symmetric monoidal categories with extra structure~\cite[Proposition~3.24 and Corollary~4.9]{FS_new}.

Let us say that a Markov category is \emph{strict} if its underlying monoidal category is strict.

\begin{thm}
	\label{cacom_strict}
	Every Markov category is comonoid equivalent to a strict one.
\end{thm}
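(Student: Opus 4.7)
The plan is to invoke Mac Lane's strictification theorem as a black box and then transport the comonoid structure along the resulting equivalence. First, I would apply strictification to the underlying symmetric monoidal category of $\C$ to obtain a strict symmetric monoidal category $\C'$ together with a symmetric monoidal equivalence $F : \C \to \C'$, with quasi-inverse $G : \C' \to \C$, natural isomorphisms $\eta : \id_\C \stackrel{\cong}{\to} GF$ and $\epsilon : FG \stackrel{\cong}{\to} \id_{\C'}$, and monoidal structure isomorphisms $\phi_{X,Y} : FX \otimes FY \stackrel{\cong}{\to} F(X \otimes Y)$ and $\phi_0 : I \stackrel{\cong}{\to} FI$.

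Next, for each $Y \in \C'$ I would define the comultiplication and counit by transport along $\epsilon_Y$, setting
\[
    \cop_Y := (\epsilon_Y \otimes \epsilon_Y) \circ \phi_{GY,GY}^{-1} \circ F(\cop_{GY}) \circ \epsilon_Y^{-1},
    \qquad
    \del_Y := \phi_0^{-1} \circ F(\del_{GY}) \circ \epsilon_Y^{-1}.
\]
That these turn $\C'$ into a Markov category is a routine verification: each of the commutative comonoid equations \cref{comonoid_ass}, \cref{comonoid_other}, the monoidal compatibility \cref{delcopyAB}, and the naturality \cref{counit_nat} becomes, after conjugating by the structure isomorphisms $\phi$ and the components of $\epsilon$, the $F$-image of the corresponding equation in $\C$. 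Since $F$ is a faithful symmetric monoidal functor, each such equation in $\C'$ follows from its counterpart in $\C$ together with the coherence theorem for symmetric monoidal categories, applied to the $\phi$-conjugates that appear.

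Finally, I would check comonoid equivalence by invoking \Cref{equiv_crit}. By construction, the isomorphism $\epsilon_Y : FGY \to Y$ is deterministic in the new Markov category $\C'$, since pulling the defining formula for $\cop_Y$ back through $\epsilon_Y^{-1}$ exactly exhibits $F(\cop_{GY})$, conjugated by $\phi_{GY,GY}^{-1}$, as the image of $\cop_{FGY}$ (which is itself defined by transport from $\cop_{GFGY}$; the two agree up to the canonical $\eta$-isomorphism, whose $F$-image is inverse to a $\epsilon$-component by the triangle identities). Together with the fact that $F$ is an equivalence of underlying categories, \Cref{equiv_crit} then yields that $F$ is a comonoid equivalence, and hence $\C'$ is a strict Markov category comonoid equivalent to $\C$. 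The main obstacle is purely bookkeeping: one must be careful that the chosen strictification supplies an invertible unit constraint $\phi_0$ compatible with the strict unit of $\C'$, and that the diagram chases verifying \cref{delcopyAB} after transport are organised so that they reduce, by coherence, to the same equation in $\C$.
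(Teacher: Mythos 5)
Your route is genuinely different from the paper's, and it has a gap at the step you describe as "routine verification." When you transport $\cop$ and $\del$ to $\C'$ along $\epsilon$ and then try to verify \cref{delcopyAB}, the determinism of $\epsilon_Y$, and the comonoid-preservation condition \cref{comonoidfun} for $F$, the isomorphisms you end up conjugating by are not only the constraints $\phi$ of $F$ (which coherence handles) but also the components of $\eta$ and the monoidal constraints $\gamma_{X,Y} : GX \otimes GY \to G(X \otimes Y)$ of the quasi-inverse. For example, unwinding your definition shows that $\epsilon_Y$ is deterministic for the transported structure precisely when $\eta_{GY}$ commutes with the comultiplications of $\C$, and that \cref{delcopyAB} for the transported $\cop$ is equivalent to the $\gamma_{X,Y}$ being deterministic in $\C$. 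Neither of these follows from monoidality of $\eta$, from coherence, or from Mac Lane's theorem used as a black box: a Markov category can contain isomorphisms that are not deterministic (\Cref{isononiso}), so "the two agree up to the canonical $\eta$-isomorphism" does not give agreement on the nose. This is exactly the subtlety the paper is organized around.

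The gap is repairable, but only by opening the black box: in the explicit list-based strictification, $\eta$, $\epsilon$, and the constraints of $G$ are all built from associators and unitors of $\C$, which are deterministic by \Cref{structure_det}, and then your transport argument goes through (this is essentially the Fong--Spivak strictification for structured monoidal categories that the paper cites). The paper instead strictifies only the cartesian monoidal subcategory $\C_\det$ and then re-attaches the non-deterministic morphisms via $\C'(X,Y) := \C(F_\det X, F_\det Y)$; every isomorphism appearing in that equivalence lives in $\C_\det$ and is deterministic by fiat, and the comonoids on $\C'$ come for free from cartesianness rather than by transport. That is what allows Mac Lane's theorem to be used as a genuine black box, which your version, as written, does not achieve.
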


\begin{proof}
	Consider the subcategory of deterministic morphisms $\C_\det \subseteq \C$, which is cartesian monoidal, and choose a strict symmetric monoidal category $\C'_\det$ with a symmetric monoidal equivalence $F_\det : \C'_\det \stackrel{\cong}{\to} \C_\det$. Now let $\C'$ be the monoidal category having the same objects as $\C'_\det$, but hom-sets $\C'(X,Y) := \C(F_\det X,F_\det Y)$. If we consider $\C'_\det$ to be a subcategory of $\C'$ via $F_\det$, and write $F$ for the obvious functor $\C' \to \C$, then we have a diagram of categories
	\[
		\begin{tikzcd}[row sep=0.5cm]
			\C'_\det \ar[draw=none]{d}[sloped,auto=false]{\subseteq} \ar["F_\det"]{r} & \C_\det \ar[draw=none]{d}[sloped,auto=false]{\subseteq} \\
			\C' \ar["F"]{r} & \C
		\end{tikzcd}
	\]
	which commutes on the nose. Since $\C'_\det$ is cartesian monoidal, $\C'$ inherits canonical comonoids satisfying the compatibility with the monoidal structure of~\cref{copydiag}, and the monoidal unit $I \in \C'$ is terminal by construction. Hence $\C'$ is also a Markov category.
	
	Since $F_\det$ is symmetric monoidal and $F$ merely extends it to the non-deterministic morphisms, also $F$ comes equipped with a symmetric monoidal structure, and it is a symmetric monoidal equivalence by virtue of being an equivalence and symmetric monoidal. The essential surjectivity criterion of \Cref{equiv_crit} now holds because $F_\det$ is an equivalence.
\end{proof}

This result guarantees us that applying the graphical language for symmetric monoidal categories~\cite{JS}, as we do throughout the paper, indeed gives results that apply to all Markov categories (as the graphical language depicts only the strict case).

\begin{rem}[{cf.~\cite[Proposition~4.3]{HP}}]
	\label{eckmannhilton1}
	If $\C$ is a Markov category and $X \in \C$, then a version of the Eckmann--Hilton argument shows that $X$ carries a unique deterministic comonoid structure. For if we have any other deterministic comultiplication on $X$, drawn as a white dot, then we get
	\[
		\input{eckmann_hilton.tikz}
	\]
	where the second equation is by the determinism assumption as well as~\cref{delcopyAB}, and we have used the assumption that $I$ is terminal, which means that the two counits coincide.	
	
	This property is not particularly surprising, since the subcategory $\C_\det \subseteq \C$ is cartesian monoidal (\Cref{rem_det_cartesian}). So we mention this mainly as preparation for the stronger observation made in the upcoming~\Cref{eh_positive}.
\end{rem}

\begin{rem}
	\label{freyd}
	The previous arguments indicate that it may be useful to think of the subcategory $\C_\det \subseteq \C$ as the primary object which specifies a Markov category, together with some way of equipping this category with additional morphisms. This leads us to suspect that there could be a simple alternative definition of Markov categories as categories with finite products, modelling $\C_\det$, equipped with a suitable kind of \emph{arrow}~\cite{JHH}, modelling $\C$. Similarly and essentially equivalently~\cite[Theorem~6.1]{JHH}, the inclusion functor $\C_\det \to \C$ can also be seen as a \emph{commutative Freyd category}~\cite{LPT}\footnote{This has been pointed out to us by Sam Staton.}. In fact, it may be possible to generalize many of our methods and results to all semicartesian commutative Freyd categories. In our language, this amounts to starting with a pair of categories $(\C_\det,\C)$ where $\C_\det$ is cartesian monoidal, $\C$ is symmetric semicartesian monoidal, and we are given a strong monoidal functor $\C_\det \to \C$, which in particular equips $\C$ with affine comonoids, where the comultiplications are given as the $\C$-images of the diagonals $X \to X \otimes X$ defined in $\C_\det$. In this setting, deterministic morphisms and stochastic morphisms live in two separate categories, a setup proposed earlier by Golubtsov~\cite[Sections~2 and~3]{golubtsov2}\footnote{See also Golubtsov's~\cite[Definition~7]{golubtsov_fuzzy} for where the same definition has already been put forward earlier in the context of a specific category.} with minor variations. For cartesian monoidal $\D$, the situation $\D \to \Kl{T}$ described in \Cref{affine_monoidal_monad} gives a paradigmatic example.

	There are advantages and disadvantages to such an approach. The main advantage is that one can work with the inclusion $\Meas \to \Stoch$, and consider the deterministic morphisms to be the morphisms of $\Meas$, thereby getting around the problem described in \Cref{stoch_det}. A seemingly greater disadvantage is that one can no longer state and prove results such as \Cref{retract_asdet}, stating that retracts are almost surely deterministic.
\end{rem}

\section{Further candidate axioms for Markov categories}
\label{furtheraxioms}

Traditionally, probability and statistics take place in some souped-up version of $\FinStoch$, namely some suitable category of measurable spaces and Markov kernels (\Cref{sec_giry,sec_radon}). Such a category has many additional properties which sometimes play an important role. In this section, we describe some of these properties, thinking of them as potential additional axioms that can make a Markov category into a category that ``looks more like'' a category of Markov kernels. However, our present investigations seem too preliminary to settle on any particular set of axioms, so that we will not make a definite choice. None of these axioms will play a central role in the rest of this paper, although we will have occasional use for them in the upcoming sections (where we make explicit mention when one of them is assumed).

\subsection*{Conditionals and conditioning}

The first special property of probability theory is the possibility to do conditioning. The standard properties of conditional distributions are captured in the following definition, which is originally due to Golubtsov~\cite[Section~7.1]{golubtsov2} and has been rediscovered more recently by Cho and Jacobs~\cite[Section~3]{cho_jacobs}, who called it \emph{admitting disintegration}.

\begin{defn}
	\label{defn_cond_dists}
	Let $\C$ be a Markov category. We say that $\C$ \emph{has conditional distributions} if for every morphism $\psi : I \to X \otimes Y$, there is $\psi_{|X} : X \to Y$ such that
	\beq
		\label{cond_dist}
		\input{cond_dist.tikz}
	\eeq
\end{defn}

In terms of \Cref{formal_notation}, this equation looks exactly like the familiar defining equation of conditional distributions in the discrete case, which is the \emph{chain rule}
\beq
	\label{formal_cond_dist}
	\psi(x,y) = \psi_{|X}(y|x) \, \psi(x).
\eeq
The difference is that \Cref{formal_notation} lets us interpret this equation not only in the discrete case, but in \emph{any} Markov category, including in categories like $\Stoch$ or $\BorelStoch$ concerned with measure-theoretic probability. Note that in this case, this equation is \emph{not} an equation about densities, and $x$ and $y$ are \emph{not} elements of the measurable spaces involved, but rather mere formal variables which encode the wiring structure of \eqref{cond_dist} in symbolic form. The correct interpretation of this equation in measure-theoretic probability is the upcoming~\eqref{prcp}.

\begin{ex}
	\label{finstoch_cond_dist}
	In $\FinStoch$, we can interpret the formal variables $x$ and $y$ as actual elements. It follows that $\FinStoch$ has conditional distributions given by
	\[
		\psi_{|X}(y|x) = \frac{\psi(x,y)}{\psi(x)},	
	\]
	in case that $x \in X$ is such that $\psi(x) > 0$, which is the usual defining equation for conditional probabilities. In case that $\psi(x) = 0$, we choose any set of values $\psi_{|X}(y|x) \in [0,1]$ satisfying the required normalization condition $\sum_y \psi_{|X}(y|x) = 1$.
	Now indeed the relevant equation~\eqref{formal_cond_dist} holds by definition, and it can be interpreted either formally in the sense of \Cref{formal_notation} as a different way of writing~\cref{cond_eq}, or now equivalently as an actual equation involving real numbers and multiplication.
\end{ex}

\begin{ex}
	\label{stoch_cond_dist}
	What are conditional distributions in $\Stoch$? For a given probability measure $\psi$ on a product $\sigma$-algebra $\Sigma_X \otimes \Sigma_Y$, a conditional distribution is a Markov kernel $\psi_{|X} : (X,\Sigma_X) \to (Y,\Sigma_Y)$ satisfying the equation
	\beq
		\label{prcp}
		\psi(S\times T) = \int_{x\in S} \psi_{|X}(T|x) \, \psi(dx)
	\eeq
	for all $S \in \Sigma_X$ and $T \in \Sigma_Y$, since this is enough to prove the equation~\eqref{cond_dist} by \Cref{stoch_equal}. Such a $\psi_{|X}$ is sometimes called a \emph{product regular conditional probability}, and is known to exist whenever $(Y,\Sigma_Y)$ is standard Borel, but not in general~\cite{faden}. Thus $\BorelStoch$ has conditional distributions, but $\Stoch$ does not.
	
	A more commonly studied concept in probability and statistics are \emph{regular conditional distributions}, defined as follows~\cite[Definition~7.28]{klenke}. Let $(W,\Sigma_W,\psi)$ be a probability space and $f : (W,\Sigma_W) \to (Z,\Sigma_Z)$ a measurable function (or ``random variable'') taking values in some other measurable space $(Z,\Sigma_Z)$. Suppose that $\mathcal{F} \subseteq \Sigma_W$ is a sub-$\sigma$-algebra. Then the identity map $i : (W,\Sigma_W) \to (W,\mathcal{F})$ is measurable, and therefore also a (deterministic) morphism in $\Stoch$. Then we can consider the joint probability measure on $(W,\mathcal{F})$ and $(Z,\Sigma_Z)$ given by
	\[
		\input{rcp.tikz}
	\]
	A \emph{regular conditional distribution} in the traditional sense is then the same as a conditional distribution in our sense, instantiated in $\Stoch$ on this particular joint distribution. Again, it is a standard fact that such regular conditional distributions exist whenever $(Z,\Sigma_Z)$ is standard Borel~\cite[Theorem~8.37]{klenke},~\cite[452N]{fremlin4}, but not in general~\cite[Exercise~33.13]{billingsley}.
	
	Conversely, a generic conditional distribution in our sense can be constructed as a regular conditional probability by putting
	\[
		W := X \times Y, \qquad Z := Y,
	\]
	with the obvious $\sigma$-algebras, as well as taking $f$ to be the projection $X \times Y \to Y$ and $\mathcal{F}$ to be given by the sub-$\sigma$-algebra $\Sigma_X \subseteq \Sigma_X \otimes \Sigma_Y$. Thus conditional distributions in our sense and regular conditional distributions in the usual sense are mutual special cases of each other.

	In conclusion, $\Stoch$ does not have conditional distributions in general, but $\BorelStoch$ does. Furthermore, the existence of conditional distributions in our sense is equivalent to the existence of regular conditional distributions in the usual sense.
\end{ex}

\begin{ex}
	The Kleisli category of the Radon monad $\Kl{R}$ (\Cref{sec_radon}) does not have conditional distributions, for the simple reason that requiring the conditional $X \to Y$ to be represented by a continuous function $X \to RY$ is too much to ask for: conditionals are rarely continuous.
\end{ex}

However, \Cref{defn_cond_dists} is not yet the definition that seems most natural to us, and we will next present a more general definition which turns out to be more useful for our purposes. For motivation, consider a morphism $\psi : I \to X \otimes Y \otimes Z$, which we may think of as a joint distribution of three random variables. Then it ought to be possible to obtain a conditional $\psi_{|X \otimes Y} : X \otimes Y \to Z$ by first conditioning on $X$ and then conditioning on $Y$. However, the second step is \emph{not} a form of conditioning in the sense of \Cref{defn_cond_dists}, since only conditioning on morphisms with domain $I$ is allowed, while we would like to condition $\psi_{|X} : X \to Y \otimes Z$ on $Y$. Using the following improved definition, we will prove this iterated conditioning property in \Cref{conditional_iterate}.

\begin{defn}
	\label{defn_has_conds}
	Let $\C$ be a Markov category. We say that $\C$ \emph{has conditionals} if for every morphism $f : A \to X \otimes Y$, there is $f_{|X} : X \otimes A \to Y$ such that
	\beq
		\label{cond_eq}
		\input{conditioning_recover.tikz}
	\eeq
\end{defn}

This property specializes to the property of \Cref{defn_cond_dists} by setting $A = I$.

In terms of \Cref{formal_notation}, equation \eqref{cond_eq} becomes a conditional version of the chain rule,
\[
	f(x,y|a) = f_{|X}(y|x,a) f(x|a),
\]
which again is familiar from discrete probability, but now becomes valid in any Markov category. If $\C$ has conditionals, then we similarly get a conditional $f_{|Y}(x|y,a)$ by suitable permutations of the wires. This results in \emph{Bayes' theorem} in the generalized form
\[
	f_{|X}(y|x,a) f(x|a) = f_{|Y}(x|y,a) f(y|a),
\]
where the special case $A = I$ has been treated by Cho and Jacobs~\cite{cho_jacobs}, and appears also in similar form and somewhat implicitly in earlier work of Golubtsov~\cite[Theorem~6]{golubtsov2}\footnote{See also~\cite[Theorem~4]{golubtsov_fuzzy} for an even earlier reference where Golubtsov's considerations already appear in the context of one particular category.}.

In typical examples, permitting $A$ to be general in the definition of conditionals amounts to a certain regularity condition on the formation of conditionals: if the joint distribution of $X$ and $Y$ depends suitably nicely on $A$ so as to form a morphism in $\C$, then there also is a conditional $f_{|X}$ which depends on $A$ suitably nicely so as to form a morphism in $\C$.

\begin{ex}
	\label{finstoch_cond}
	$\FinStoch$ has conditionals. Similar to \Cref{finstoch_cond_dist}, for a given morphism $f(x,y|a)$ we form
	\beq
		\label{fcondeq}
		f_{|X}(y|x,a) := \frac{f(x,y|a)}{f(x|a)},
	\eeq
	in case that $x \in X$ and $a\in A$ are such that $f(x|a) > 0$, which again is essentially the usual defining equation for conditional probabilities, and again arbitrary values satisfying the normalization $\sum_y f_{|X}(y|x,a) = 1$ otherwise. 
\end{ex}

\begin{ex}
	\label{borelstoch_cond}
	Continuing from \Cref{stoch_cond_dist}, it was recently proven by Bogachev and Malofeev that $\BorelStoch$ has conditionals~\cite[Theorem~3.5]{BM}\footnote{We thank Kenta Cho for pointing this out to us.}. In other words, a joint distribution which depends measurably on another parameter $a \in A$ has a conditional whose dependence on $a \in A$ is still measurable. This is obvious when the $\sigma$-algebra $\Sigma_A$ is discrete, since then this extra measurability requirement is trivial.
\end{ex}

\begin{ex}
	\label{gauss_cond}
	We now investigate whether $\Gauss$ (\Cref{gaussian}) has conditionals. A morphism $A \to X \otimes Y$ can conveniently be represented in the block form
\[
	\left( \left( \begin{matrix} M \\ N \end{matrix} \right) , \left( \begin{matrix} C_{\xi\xi} & C_{\xi\eta} \\ C_{\eta\xi} & C_{\eta\eta} \end{matrix} \right) , \left( \begin{matrix} s \\ t \end{matrix} \right) \right),
\]
meaning that we have
\[
	X = M A + \xi, \qquad Y = N A + \eta,
\]
where now $\xi$ and $\eta$ are potentially \emph{correlated} random variables, with the specified expectations and covariance matrices. Using well-known results on conditionals of multivariate Gaussians~\cite[Proposition~3.13]{eaton}, we find that the conditional distribution of $Y$ given $X$ can itself be written in the form
\[
	Y = C_{\eta\xi} C^-_{\xi\xi} X + (N - C_{\eta\xi} C^-_{\xi\xi} M) A + \tau, 
\]
where $\tau$ is normal with expectation $\E{\tau} := t - C_{\eta\xi} C^-_{\xi\xi} s$ and variance
\[
	\Var{\tau} := C_{\eta\eta} - C_{\eta\xi} C^-_{\xi\xi} C_{\xi\eta},
\]
and $C^-_{\xi\xi}$ is the Moore--Penrose pseudoinverse of $C_{\xi\xi}$. This matrix satisfies the relevant equations
\[
	C^-_{\xi\xi} C_{\xi\xi} C^-_{\xi\xi} = C^-_{\xi\xi}, \qquad C_{\eta\xi} C^-_{\xi\xi} C_{\xi\xi} = C_{\eta\xi}, \qquad C_{\xi\xi} C^-_{\xi\xi} C_{\xi\eta} = C_{\xi\eta},
\]
where the proof of the latter two employs standard properties of positive semidefinite matrices\footnote{They are simple to prove in a basis where $C_{\xi\xi}$ is diagonal: if there are zeros on the diagonal of $C_{\xi\xi}$, then positive semidefiniteness of $C$ implies that the relevant rows and columns of $C_{\eta\xi}$ and $C_{\xi\eta}$ must vanish as well and hence can be ignored, reducing the problem to the case where $C_{\xi\xi}$ is invertible.}. The positive semidefiniteness of the above expression for $\Var{\tau}$ follows from the fact that it is a matrix of the form $v^T C v$, where $C$ is the original covariance matrix in the above block form and
\[
	v = \left( \begin{matrix} C^-_{\xi\xi} C_{\xi\eta} \\ -1 \end{matrix} \right).
\]
The dependence of $Y$ on $X$ and $A$ therefore corresponds to the morphism $X \otimes A \to Y$ represented by the triple
\[
	\left( \left( \begin{matrix} C_{\eta\xi} C^-_{\xi\xi} & N - C_{\eta\xi} C^-_{\xi\xi} M\end{matrix} \right) ,\, C_{\eta\eta} - C_{\eta\xi} C^-_{\xi\xi} C_{\xi\eta},\, t - C_{\eta\xi} C^-_{\xi\xi} s \right),
\]
reproducing formulas due to Lauritzen and Jensen~\cite[Section~4.5]{LJ}, which generalize Golubtsov's earlier~\cite[Theorem~4]{golubtsov_gauss}. A straightforward but tedious computation using the Markov category structure introduced above shows that this morphism is indeed the desired conditional in the sense of \Cref{defn_has_conds}. Therefore $\Gauss$ has conditionals.
\end{ex}

We now present some open problems before developing the theory of conditionals further.

\begin{prob}
	Suppose that $\C$ is a Markov category which has conditionals, and $\D$ a small category. Then does the functor category $\Fun(\D,\C)$ from \Cref{functor_cat} have conditionals as well?
\end{prob}

This is not obvious, because if we try to form the conditional of a morphism of diagrams by doing so separately at each object of $\D$, then there is no guarantee that we again obtain a morphism of diagrams. A possible way around might be to relax the notion of morphism of diagrams in some way with respect to almost sure equality (\Cref{sec_as}).

In measure-theoretic probability, conditioning is also often formalized in terms of \emph{conditional expectations}. We suspect that conditional expectations indeed arise from \Cref{defn_has_conds} as well as follows.

\begin{conj}
	\label{cond_exp}
	There is a Markov category such that:
	\begin{itemize}
		\item Objects are measurable spaces $(X,\Sigma_X)$, or some variant thereof such as \emph{measure algebras}~\cite{fremlin3}.
		\item Morphisms $(X,\Sigma_X) \to (Y,\Sigma_Y)$ are some version of positive unital linear maps between spaces of bounded measurable functions in the other direction, $\mathcal{L}^\infty(Y,\Sigma_Y) \to \mathcal{L}^\infty(X,\Sigma_X)$, taking bounded directed suprema to bounded directed suprema.
		\item The monoidal structure is given by some variant of the usual product of measurable spaces.
		\item The comultiplications $X \to X \otimes X$ are implemented by restriction to the diagonal, $\mathcal{L}^\infty(X\times X,\Sigma_X \otimes \Sigma_X) \longrightarrow \mathcal{L}^\infty(X,\Sigma_X)$.
		\item Conditionals exist in the sense of \Cref{defn_has_conds}, and are given by conditional expectations.
	\end{itemize}
\end{conj}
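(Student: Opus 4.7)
The plan is to construct the desired category as the opposite of a subcategory of commutative von Neumann algebras, following the hint implicit in the notation $\Wcomm$ introduced in the preamble. Concretely, I would take as objects pairs $(X,\mu_X)$ where $\mu_X$ is a \emph{localizable} (for instance, $\sigma$-finite) measure on $(X,\Sigma_X)$, with $\mu_X$ used only to define the $\sigma$-ideal of null sets and hence $L^\infty$ up to almost-everywhere equality; equivalently, one could take the objects to be localizable measure algebras as in Fremlin, volume 3. Morphisms $(X,\mu_X)\to(Y,\mu_Y)$ are \emph{normal} positive unital $\R$-linear maps $F:L^\infty(Y,\mu_Y)\to L^\infty(X,\mu_X)$, where normality means preservation of bounded directed suprema. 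Localizability is imposed precisely to secure the identification $L^\infty(X)\,\overline{\otimes}\,L^\infty(Y)\cong L^\infty(X\times Y,\mu_X\otimes\mu_Y)$, i.e.\ that the $W^*$-tensor product of the algebras agrees with the $L^\infty$ of the product measure algebra.

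With this in hand, the monoidal structure on objects is the product of measure algebras, and the monoidal unit is $I=(\{*\},\delta_*)$ with $L^\infty(I)\cong\R$. The tensor product of morphisms $F:L^\infty(X')\to L^\infty(X)$ and $G:L^\infty(Y')\to L^\infty(Y)$ is the unique normal positive unital map $F\otimes G:L^\infty(X'\times Y')\to L^\infty(X\times Y)$ characterised on elementary tensors by $(F\otimes G)(f\otimes g)=F(f)\otimes G(g)$; existence and uniqueness are the universal property of the $W^*$-tensor product of commutative von Neumann algebras, which repairs exactly the defect of the Sacksteder/Dawid constructions of \Cref{sacksteder_stoch,dawid_stoch}. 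The comultiplication is the multiplication map $\cop_X:L^\infty(X\times X)\to L^\infty(X)$, $f\otimes g\mapsto fg$, and the counit is the unit inclusion $\R\to L^\infty(X)$. Normality of these maps is immediate, and the commutative comonoid axioms and compatibility with the monoidal structure follow by direct verification on elementary tensors, extended to the entire $L^\infty$ by normality. That $I$ is terminal is clear since any morphism into $I$ in the opposite direction is a unital $\R$-linear map $\R\to L^\infty(X)$.

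To construct conditionals in the sense of \Cref{defn_has_conds}, given $f:A\to X\otimes Y$ — i.e.\ a normal positive unital map $F:L^\infty(X\times Y)\to L^\infty(A)$ — I would define $f_{|X}:X\otimes A\to Y$ via the conditional expectation operator. More precisely, $F$ determines, for each state (probability measure) on $A$, a joint distribution on $X\times Y$, but one can work without reference to a particular state: the map $g\in L^\infty(Y)\mapsto F(1\otimes g)\in L^\infty(A)$ extends to a $L^\infty(X)$-module-like structure via $F(h\otimes g)$, and the conditional $f_{|X}$ is constructed by applying the Radon--Nikodym theorem on the localizable measure algebra $X\otimes A$ to produce, for each $g\in L^\infty(Y)$, an element $E[g\mid X,A]\in L^\infty(X\otimes A)$ satisfying $F(h\otimes g)=F\bigl((h\cdot E[g\mid X,A]\text{ viewed in }L^\infty(X\otimes A)\otimes 1)\bigr)$. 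The positivity, unitality and normality of $g\mapsto E[g\mid X,A]$ are standard properties of conditional expectation, and the defining identity~\eqref{cond_eq} unfolds in the opposite category to the pull-out property $E[h(X)\,g(Y)\mid A]=E[h(X)\,E[g\mid X,A]\mid A]$ of conditional expectation.

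The main obstacle lies in the construction of the tensor product of morphisms and of conditionals, both of which hinge on replacing pointwise reasoning with the purely algebraic $W^*$-tensor product apparatus; this is what forces the passage from measurable spaces with $\sigma$-ideals to localizable measure algebras. A closely related difficulty is verifying the coherences (such as \cref{delcopyAB}) from their content on elementary tensors — here normality serves as a replacement for the dominated-convergence/$\pi$-$\lambda$ techniques used in $\Stoch$, and it is only because morphisms are assumed normal that such extensions are unique. A secondary question, worth settling in the course of the proof, is how this category relates to $\BorelStoch$ via the obvious assignment that sends a Markov kernel to the pull-back map on bounded measurables: one expects a Markov functor, but not a full one, reflecting the fact that the $L^\infty$-category collapses the pathologies of \Cref{stoch_det} by construction.
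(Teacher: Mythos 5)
The statement you are proving is labelled as a \emph{conjecture} in the paper, and the paper offers no proof of it: immediately before it, the author writes ``we suspect that conditional expectations indeed arise from \Cref{defn_has_conds} as follows,'' and immediately after it points to \Cref{dawid_stoch} as a candidate. In that example the paper explicitly identifies the obstruction --- the tensor product of morphisms cannot be extended from elementary tensors $\sum_i f_i\otimes g_i$ to all of $L^\infty(X\times Y)$ using only sequential suprema --- and proposes exactly the remedy you adopt: pass to complete Boolean measure algebras and strengthen sequential continuity to Scott continuity (normality). So your plan is faithful to the paper's own intended route; the question is only whether your sketch actually closes the gap. It does not yet.

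Two steps in your argument carry essentially all of the mathematical content of the conjecture and are asserted rather than proven. First, the existence and uniqueness of $F\otimes G$ on $L^\infty(X'\times Y')$ is not a ``universal property of the $W^*$-tensor product'': the spatial $W^*$-tensor product has no universal property for positive maps, and the existence of $F\,\overline\otimes\,G$ for normal positive maps is a theorem (via complete positivity, which for maps out of commutative algebras coincides with positivity, and a Stinespring-type argument), which moreover requires the identification $L^\infty(X)\,\overline\otimes\,L^\infty(Y)\cong L^\infty$ of the localizable measure-algebra free product --- all of this is plausible and standard-adjacent, but it is precisely the verification the paper says it could not carry out, so it must be written down, not invoked. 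Second, and more seriously, your construction of conditionals appeals to ``standard properties of conditional expectation,'' but those presuppose a reference probability measure, which is absent here: the data is a normal positive unital map $F:L^\infty(X\times Y)\to L^\infty(A)$ whose codomain is not $\R$. What you actually need is an $L^\infty(A)$-module-valued (operator-valued) Radon--Nikodym theorem producing, for each $g$, an element $E[g\mid X,A]\in L^\infty(X\times A)$ that is simultaneously positive, unital and \emph{normal} in $g$, followed by a verification that the resulting map satisfies~\eqref{cond_eq} and not merely its restriction to elementary tensors. None of this is routine --- normal conditional expectations can fail to exist without localizability, and their existence with a varying ``base'' $L^\infty(A)$ is exactly the hard part --- so as written your text is a credible research plan for settling the conjecture, not a proof of it.
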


One candidate for such a Markov category may be the category described in \Cref{dawid_stoch}.

Returning to the general theory, sometimes we may want to condition on more than one output. The following guarantees that this can be done either all at once or step by step:

\begin{lem}
	\label{conditional_iterate}
	Suppose that $\C$ has conditionals. Then for any morphism $f : A \to X \otimes Y \otimes Z$, we have
	\[
		\input{double_conditional.tikz}
	\]
	in the sense that every double conditional $(f_{|X})_{|Y}$ as on the right is also a conditional $f_{|X\otimes Y}$ as on the left (with first two inputs swapped).
\end{lem}

Note that this result can also be applied iteratively in order to compute a conditional $f_{|X_1\otimes \ldots \otimes X_n}$ of a morphism $f : A \to X_1 \otimes \ldots \otimes X_n \otimes Y$. So far we have not found any formulation and proof of a converse, meaning a way to say that every conditional $f_{|X \otimes Y}$ is also a double conditional $(f_{|X})_{|Y}$ (with first two inputs swapped).

\begin{proof}
	We start with the chosen $(f_{|X})_{|Y}$ and compute	
	\[
		\input{double_conditional_proof1.tikz}
	\]
	\[
		\input{double_conditional_proof2.tikz}
	\]
	which shows indeed that the given $(f_{|X})_{|Y)}$ is also a conditional $f_{|X \otimes Y}$, modulo swapping the first two inputs.
\end{proof}

Conditioning also enjoys a certain compositionality property:

\begin{lem}
	\label{cond_postcompose}
	If $f : A \to X \otimes W$ and $g : W \otimes B \to Y$ are arbitrary morphisms and $f_{|X} : X \otimes A \to W$ is a corresponding conditional, then the composite
	\[
		\input{general_compose.tikz}
	\]
	has a conditional given by
	\[
		\input{conditional_compose.tikz}
	\]
\end{lem}

\begin{proof}
	Straightforward string diagram computation.
\end{proof}

\begin{rem}
	\label{cond_marginal}
	As a special case, we obtain that conditioning commutes with marginalization as follows. Using \Cref{formal_notation}, any $f : A \to X \otimes Y \otimes Z$ is such that the marginal $f_{|X}(y|xa)$ of the conditional $f_{|X}(yz|xa)$ is also a conditional $f(y|xa)_{|X}$ for the marginal $f(xy|a)$. This follows from \Cref{cond_postcompose} upon taking $B := I$ and $g : Y \otimes Z \to Y$ to be the projection.
\end{rem}

\begin{rem}
	Besides the previous two lemmas, we have not been able to identify any further potentially useful categorical properties of conditioning. Our original goal in this work had been to axiomatize conditioning as an operation on a category satisfying certain naturality conditions. We have abandoned this due to the lack of further candidate axioms for conditioning which would hold e.g.~in $\FinStoch$, and the well-known problem that conditionals are generally non-unique in cases where zero probabilities occur (\Cref{cond_notunique}), leading us to suspect that a reasonably coherent choice of conditionals is not possible. We have been all the more surprised to realize that conditioning is also not a very relevant concept for most of the developments of this paper.
\end{rem}

Within our framework, it is impossible to guarantee the uniqueness of conditionals except in uninteresting degenerate cases, because of the following observation.

\begin{prop}
	\label{cond_notunique}
	Suppose that $\C$ has conditionals. Then conditionals in $\C$ are unique if and only if any two parallel morphisms in $\C$ are equal.
\end{prop}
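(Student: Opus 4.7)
The $(\Leftarrow)$ direction is immediate: if every pair of parallel morphisms in $\C$ is equal, then any two conditionals of a given morphism automatically coincide.

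For the $(\Rightarrow)$ direction, the strategy is to use $\cop_B$ itself as a test morphism, applying \Cref{defn_has_conds} with $A = X = Y = B$ and $f := \cop_B : B \to B \otimes B$. The $X$-marginal of this $f$ is $\id_B$ by the counit axiom, and using coassociativity of $\cop_B$ together with the counit axiom once more shows that the right-hand side of \eqref{cond_eq} simplifies to
\[
(\id_B \otimes (h \circ \cop_B)) \circ \cop_B,
\]
for any candidate conditional $h : B \otimes B \to B$. Hence any section $h$ of $\cop_B$, in the sense that $h \circ \cop_B = \id_B$, is a valid conditional. The two projections $\pi_1 := \id_B \otimes \del_B$ and $\pi_2 := \del_B \otimes \id_B$ both satisfy this section property by the comonoid axioms, so both are conditionals of $\cop_B$. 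Assuming uniqueness of conditionals, we conclude $\pi_1 = \pi_2$ as morphisms $B \otimes B \to B$, for every object $B \in \C$.

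Given any parallel pair $f, g : A \to B$, I would then form $(f \otimes g) \circ \cop_A : A \to B \otimes B$ and post-compose with the projections. Post-composition with $\pi_1$ yields $(f \otimes (\del_B \circ g)) \circ \cop_A = (f \otimes \del_A) \circ \cop_A = f$, using terminality of $I$ (so $\del_B \circ g = \del_A$) together with the counit axiom for $A$; symmetrically, post-composition with $\pi_2$ yields $g$. The equality $\pi_1 = \pi_2$ established above thus forces $f = g$, completing the proof. The only step requiring any care is the verification that the projections satisfy the full defining equation \eqref{cond_eq}, but this reduces to the section property $\pi_i \circ \cop_B = \id_B$ via the comonoid axioms alone, so I do not anticipate a substantive obstacle.
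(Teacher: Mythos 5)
Your proof is correct and follows essentially the same route as the paper: both arguments deduce from uniqueness that the two marginalization projections $B \otimes B \to B$ coincide (by exhibiting each as a conditional of $\cop_B$), and then recover $f = g$ for a parallel pair by post-composing $(f \otimes g) \circ \cop_A$ with the two projections. The only cosmetic difference is that you spell out the reduction of \eqref{cond_eq} to the section property $h \circ \cop_B = \id_B$, which the paper leaves as "a straightforward computation using the comonoid axioms."
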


\begin{proof}
	The ``if'' direction is trivial, so that we focus on the ``only if'' part. Assuming the uniqueness of conditionals, we first prove that for every $Y \in \C$, we have an equality of morphisms $Y \otimes Y \to Y$ given by
	\[
		\input{xdegenerate.tikz}
	\]
	Indeed, a straightforward computation using the comonoid axioms \cref{comonoid_ass,comonoid_other} shows that both sides are conditionals for the comultiplication
	\[
		\input{comultiplication.tikz}
	\]
	Now for any two parallel morphisms $f, g : X \to Y$, we get
	\[
		\input{degenerate_comp.tikz}
	\]
	as was to be shown.
\end{proof}

\begin{rem}
The equality of parallel morphisms, $f = g$ for $f,g : X \to Y$, means that $\C$ is equivalent to a preordered set: the category structure is uniquely determined by writing $X \leq Y$ if and only if there is a (necessarily unique) morphism $X \to Y$. Then the comonoid structures of \Cref{markov_cat} show that the tensor product of objects must be a meet operation in the sense of lattice theory, making $\C$ into a meet-semilatice with neutral element $I \in \C$. Conversely, upon considering every meet-semilattice as a monoidal category, it also is a Markov category which has unique conditionals.
\end{rem}

The existence of conditionals implies a stronger disintegration property, which we will relate to the standard notion of disintegration in measure-theoretic probability in \Cref{stoch_disint}.

\begin{prop}[Disintegration]
	\label{disint}
	Suppose that $\C$ has conditionals. Then for every $p : A \to X$ and $f : X \to Y$, there is $s : A \otimes Y \to X$ such that
	\beq
		\label{disint_eq1}
		\input{disintegration.tikz}
	\eeq
\end{prop}

For $A = I$ and deterministic $f$, this equation implies the almost sure equality $fs =_{fp\as} \id_Y$ (\Cref{sec_as}) by composing with $f$ on the right output.

\begin{proof}
	Define $s$ to be a conditional of the morphism
	\[
		\input{disintegration_proof1.tikz}
	\]
	on $Y$. This means that $s$ satisfies the equation
	\[
		\input{disintegration_proof2.tikz}
	\]
	which immediately implies the claim.
\end{proof}

\begin{ex}
	\label{stoch_disint}
	In $\Stoch$, setting $A = I$ in \Cref{disint} and taking $f$ to be given by a measure-preserving function of measurable spaces $f : X \to Y$ specializes the statement to the usual notion of disintegration of a probability measure $p$ on $X$ with respect to a measure-preserving function $f : X \to Y$.\footnote{For example, we may consider a sub-$\sigma$-algebra $\Sigma'_X$ and consider the measurable map $f := \id_X : (X,\Sigma_X) \to (X,\Sigma'_X)$, thereby recovering the notion of disintegration with respect to a sub-$\sigma$-algebra.} Indeed under these assumptions,~\cref{disint_eq1} states that 
	\[
		p(f^{-1}(T) \cap S) \stackrel{!}{=} \int_{y\in T} s(S|y) \, (fp)(dy) = \int_{y\in T} s(S|y) \, p(f^{-1}(dy))
	\]
	for all $S \in \Sigma_X$ and $T \in \Sigma_Y$. This is exactly the usual notion of \emph{regular conditional probability}. It is well-known that regular conditional probabilities do not exist in general~\cite[p.~624]{doob}. We can therefore conclude, by the contrapositive of \Cref{disint}, that $\Stoch$ does not have conditionals, as we had already noted in \Cref{stoch_cond_dist}.
\end{ex}

\subsection*{Randomness pushback} We move on to our second candidate axiom. It is inspired by the treatment of random mappings in~\cite[Example~2.6(3)]{jhht}, and a similar definition has been given as~\cite[Definition~8]{CH}. For $\FinStoch$, it is well-known that randomness can be ``pushed back'', in the sense that every Markov kernel can be replaced by a deterministic map with one extra input into which one feeds a random value with a fixed distribution.

\begin{defn}[Randomness pushback]
	\label{random_pushback}
	We say that $\C$ has \emph{randomness pushback} if for every morphism $f : X \to Y$ in $\C$, there are $A \in \C$, deterministic $g : A \otimes X \to Y$ and $\psi : I \to A$ such that
	\beq
		\label{rp}
		\input{randomness_pushback.tikz}
	\eeq
\end{defn}

While this is an interesting property, we will have no use for it in this paper. In certain other contexts, it may be useful to apply it repeatedly. This happens for example in the theory of Bayesian networks, where one can use it to prove the equivalence between Bayesian networks with stochastic dependencies, and networks with purely functional (i.e.~deterministic) dependencies plus extra ``exogenous'' root variables which supply the randomness~\cite{DS_others}. Our new formal perspective should have the potential to shed new light on technical problems with measure-theoretic aspects in these contexts, such as~\cite[Conjecture~4.5]{fritz}.

As the following examples show more concretely, for $Y = X$ randomness pushback essentially specializes to the problem of representing a stochastic dynamical system in terms of a random choice of deterministic dynamics~\cite[Theorem~1.1]{kifer}.

\begin{ex}
	\label{finstoch_rp}
	$\FinStoch$ has randomness pushback. For given $f$, take $A := Y^X$ to be the finite set of functions from $X$ to $Y$, and define $\psi : I \to A$ by declaring the probability weight of any function $\mathfrak{y} \in Y^X$ to be given by
	\[
		\psi(\mathfrak{y}) := \prod_{x\in X} f(\mathfrak{y}(x)|x),
	\]
	so that all the individual values $\mathfrak{y}(x)$ are stochastically independent and have the same distribution as $f(y|x)$.\footnote{Any other joint distribution with the same marginals on the $Y$-factors of $Y^X$ will work just as well.} Using the deterministic function $g : Y^X \times X \to Y$ given by evaluation,
	\[
		g(\mathfrak{y}, x) := \mathfrak{y}(x),
	\]
	then clearly results in~\cref{rp}.
\end{ex}

We do not know whether $\Stoch$ has randomness pushback in general. One obstacle in generalizing the above argument from $\FinStoch$ to $\Stoch$ is the fact that $\Meas$ is not cartesian closed~\cite{aumann}.

\begin{ex}
	\label{gauss_rp}
	We now show that $\Gauss$ also has randomness pushback. Any morphism $(M,C,s) : n \to m$, representing $Y = MX + \xi$, can be obtained by plugging in the morphism $\psi : 0 \to m$ with parameters $(0,C,s)$ into the addition map $+ : m + m \to m$, which is the deterministic morphism represented by $\left( \left( \begin{matrix} 1 & 1 \end{matrix} \right), 0, 0 \right)$.
\end{ex}

\subsection*{Positivity}

Our third candidate axiom is a bit more mysterious. We will show that it holds in our running examples, and then make crucial use of it to prove our version of the Fisher--Neyman factorization theorem (\Cref{thm_FN}).

\begin{defn}
	\label{positive_defn}
	We say that $\C$ is \emph{positive} if whenever $f : X \to Y$ and $g : Y \to Z$ are such that $gf$ is deterministic, then also
	\beq
		\label{positive}
		\input{positive.tikz}
	\eeq
\end{defn}

In terms of the upcoming \Cref{ciprocdef}, this equation can be understood as saying that the morphism shown on the left-hand side displays the conditional independence $\condindproc{Y}{Z}{X}$. Intuitively, this equation means the following: if a deterministic process (first output) has a random intermediate result (second output), then the same distribution over both results can also be obtained by computing the intermediate result independently of the deterministic process.

\begin{rem}
	\label{id_positive}
	A relevant special case which comes up frequently is with $gf = \id$, in which case we get
	\[
		\input{id_positive.tikz}
	\]
\end{rem}

The following result implies that many of our example Markov categories are indeed positive, in particular $\FinStoch$ and $\Gauss$.

\begin{lem}
	If $\C$ has conditionals, then $\C$ is positive.
	\label{cond_positive}
\end{lem}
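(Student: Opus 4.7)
The plan is to use the existence of conditionals to show that both sides of the positivity equation \eqref{positive} admit the same factorization through a common auxiliary morphism. Write $h := (g \otimes \id_Y) \circ \cop_Y \circ f : X \to Z \otimes Y$ for the left-hand side of \eqref{positive}, and note directly from this definition that its $Z$-marginal $(\del_Y \otimes \id_Z) \circ h$ equals $gf$ and its $Y$-marginal equals $f$.

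Since $\C$ has conditionals, I pick a conditional $h_{|Z} : Z \otimes X \to Y$ satisfying the factorization equation \eqref{cond_eq} applied to $h$; concretely, $h$ equals the composite that first copies the input $x$, sends one copy through $gf$ to obtain $z$ and then copies $z$, and finally feeds one copy of $z$ together with the other copy of $x$ into $h_{|Z}$ to produce $y$. I then invoke the determinism of $gf$ in the form \eqref{comult_natural} to push the inner copy $\cop_Z$ across $gf$, turning it into two parallel copies of $gf$ preceded by a new $\cop_X$. This produces a rewritten diagram in which $x$ is copied into three branches: one branch goes through $gf$ and is the first output, a second branch goes through $gf$ and feeds the first input of $h_{|Z}$, and the third branch feeds the second input of $h_{|Z}$.

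The key next step is to identify the sub-diagram consisting of the last two branches together with $h_{|Z}$ as the morphism $f$ itself. To see this, I simply marginalize the original conditional factorization of $h$ over $Z$: on the one hand, this marginal is $f$ by the computation in the first paragraph, and on the other hand, applying the determinism of $gf$ again inside the marginalized factorization collapses $\cop_Z \circ gf$ composed with $\del_Z \otimes \id$ to exactly the sub-diagram described above. Substituting this identity back into the rewritten expression for $h$ leaves precisely the right-hand side of \eqref{positive}, a morphism $(gf) \otimes f$ preceded by $\cop_X$.

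The main obstacle I anticipate is purely bookkeeping: translating the informal ``chain rule plus determinism'' calculation $h(z,y\mid x) = h_{|Z}(y\mid z,x)(gf)(z\mid x) = h_{|Z}(y\mid gf(x), x)\,[z = gf(x)]$ and $f(y\mid x) = h_{|Z}(y\mid gf(x),x)$ into a clean string-diagrammatic manipulation that applies \eqref{comult_natural} to $gf$ and \eqref{cond_eq} to $h$ in the right order, and that correctly uses coassociativity of $\cop_X$ when handling the three copies of the input. No further axiom on $\C$ is needed beyond the existence of conditionals and the definitions in \Cref{markov_cat,defn_det,defn_has_conds}.
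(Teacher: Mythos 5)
Your proposal is correct and follows essentially the same route as the paper: the paper's proof applies its Disintegration result (\Cref{disint}), whose proof consists precisely of taking a conditional of the morphism you call $h$, so your $h_{|Z}$ is the paper's $s$ and the subsequent steps (pushing $\cop$ across the deterministic $gf$ via \eqref{comult_natural}, then recognizing the marginalized factorization as $f$) coincide. The only cosmetic difference is that you inline the disintegration step, and your final identification of the sub-diagram with $f$ needs only the counit law rather than a second use of determinism.
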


\begin{proof}
	By \Cref{disint}, we have $s$ such that
	\beq
		\label{positive_cond_eq}
		\input{positive_cond.tikz}
	\eeq
	The determinism assumption evaluates this further to
	\[
		\input{positive_cond2.tikz}
	\]
	where the second step is another use of~\eqref{positive_cond_eq}. This implies the claim.
\end{proof}

The term ``positive'' in \Cref{positive_defn} is motivated by the crucial role of the nonnegativity of probabilities in the following arguments.

\begin{ex}
	\label{stoch_positive}
	$\Stoch$ is positive, as we now show. By the characterization of deterministic morphisms in \Cref{stoch_det}, the assumption is that
	\beq
		\label{stoch_positive_ass}
		\int_{y\in Y} g(S|y) \, f(dy|x)  \: \in \{0,1\}
	\eeq
	for all $S \in \Sigma_X$ and $x \in X$. By \Cref{stoch_equal}, what we want to prove is the equation
	\beq
		\label{stoch_positive_claim}
		\int_{y \in T} g(S|y) \, f(dy|x) = f(T|y) \, \int_{y \in Y} g(S|y) \, f(dy|x)
	\eeq
	for all $S \in \Sigma_Z$ and $T \in \Sigma_Y$ and $x \in X$. 
	
	It is easy to see that if the claimed equation holds for $S$, then it also holds for the complementary event $Z \setminus S$. Thus it is enough to consider the case where \eqref{stoch_positive_ass} is zero. Since an integral of a nonnegative function vanishes if and only if the integrand vanishes almost surely, we can conclude that the integrand $y \mapsto g(S|y)$ vanishes $f(-|x)$-almost surely on all of $Y$. Therefore both sides of the claimed equation~\eqref{stoch_positive_claim} vanish as well.
\end{ex}

\begin{rem}
	It is easy to see that if $\C \to \C'$ is a Markov embedding and $\C'$ is positive, then so is $\C$. For example since $\Stoch$ is positive, the positivity of $\Gauss$ follows alternatively from the Markov embedding $\Gauss \to \Stoch$ of \Cref{gauss_stoch}.
\end{rem}

We now present a negative example.

\begin{ex}
	\label{finstoch_pm}
	Let $\FinStoch_\pm$ be defined just as $\FinStoch$, but dropping the nonnegativity of the matrix entries, so that a morphism $f : X \to Y$ is a matrix $(f(y|x))_{x\in X, y \in Y}$ of arbitrary real numbers with $\sum_y f(y|x) = 1$ for all $x \in X$. In other words, negative probabilities are allowed. $\FinStoch_\pm$ is easily seen to be a Markov category in the same way as $\FinStoch$ is. We can also obtain $\FinStoch_\pm$ as a Markov category from the hypergraph category $\mathsf{Mat}(\R)$ via~\Cref{sec_hypergraph}.
	
	$\FinStoch_\pm$ is not positive: a straightforward but slightly tedious computation shows that
	\[
		f = \left( \begin{matrix} 2 & 2 \\ -1 & 0 \\ 0 & -1 \end{matrix} \right), \qquad g = \left( \begin{matrix} \frac{1}{2} & 0 & 1 \\[2pt] \frac{1}{2} & 1 & 0 \end{matrix} \right),
	\]
	satisfy $gf = \id$, but the equation of \Cref{id_positive} does not hold.
	
	In particular, the contrapositive of \Cref{cond_positive} implies that $\FinStoch_\pm$ does not have conditionals. This can also be seen directly by noting that the desired equation for conditional distributions, $\psi(x,y) = \psi_{|X}(y|x) \, \psi(x)$, has vanishing right-hand side whenever $\sum_y \psi(x,y) = 0$, although the left-hand side does not need to vanish in this case.
\end{ex}

Let us return to the general theory.

\begin{rem}
	If $\C$ is positive, then it is easy to see that every isomorphism is deterministic: just take $g$ to be the inverse of the isomorphism $f$, and compose the equation of \Cref{id_positive} with $f \otimes \id$. This is a property which fails in general Markov categories (\Cref{comons_gen}).
\end{rem}

\begin{rem}
	\label{eh_positive}
	If $\C$ is positive, then we also get a stronger version of \Cref{eckmannhilton1}: \emph{every} comonoid structure on every object $X \in \C$ is equal to the distinguished one. Namely, we have $gf = \id$ if we take $f$ to be the other comultiplication and $g$ to be the marginalization over the second tensor factor,
	\[
		\input{counitality2.tikz}
	\]
	Now applying the positivity condition of \Cref{id_positive} results in the equation
	\[
		\input{eckmann_hilton2.tikz}
	\]
	Marginalizing over the third output now shows that the two comultiplications must be equal.

	While we have not worked out the details, we therefore hope that it may be possible to define positive Markov categories equivalently as semicartesian symmetric monoidal categories together with a symmetric monoidal equivalence to their category of internal comonoids. This comes close to describing them entirely in 2-monadic terms.

	These considerations make it seem plausible to us that some version of the positivity condition could also be of interest in the study of hypergraph categories~\cite{kissinger,fong,FS}.
\end{rem}

Using positivity, we can strengthen \Cref{deterministic_composition}\ref{2of3}.

\begin{lem}
	Suppose that $\C$ is positive. Let $f : X \to Y$ and $g : Y \to Z$ in $\C$. If $gf$ is deterministic and $f$ is an epimorphism, then $g$ is deterministic as well.
\end{lem}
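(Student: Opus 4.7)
The goal is to upgrade Lemma~\ref{deterministic_composition}\ref{2of3} by trading the determinism hypothesis on $f$ for positivity of $\C$. The key observation is that positivity exactly compensates for the lack of naturality of $\cop$ along $f$: where in the original proof one commuted $\cop$ past $f$ using determinism of $f$, here one commutes $\cop$ past $f$ using positivity together with determinism of $gf$.

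Concretely, I would show that $\cop_Z \circ g = (g \otimes g) \circ \cop_Y$, which is precisely the determinism condition~\eqref{comult_natural} for $g$. Since $f$ is an epimorphism, it suffices to verify this equation after precomposing with $f$, i.e.\ to show
\[
	\cop_Z \circ g \circ f \;=\; (g \otimes g) \circ \cop_Y \circ f.
\]
For the left-hand side, I would rewrite $\cop_Z \circ g \circ f = \cop_Z \circ (gf)$ and then apply determinism of $gf$ to obtain $(gf \otimes gf) \circ \cop_X$.

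For the right-hand side, I would factor $(g \otimes g) = (\id_Z \otimes g) \circ (g \otimes \id_Y)$ and then invoke the positivity equation~\eqref{positive} applied to the pair $(f,g)$ (whose composite $gf$ is deterministic by hypothesis). This rewrites $(g \otimes \id_Y) \circ \cop_Y \circ f$ as $(gf \otimes f) \circ \cop_X$, and then postcomposing with $\id_Z \otimes g$ yields $(gf \otimes gf) \circ \cop_X$, matching the left-hand side.

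The whole argument is a short string-diagram calculation; there is no real obstacle, the only thing to verify is that the orientation of the positivity equation~\eqref{positive} matches what is needed (the branch carrying $g$ sits on the ``copy-and-then-$g$'' side, which is exactly the configuration produced by factoring $g \otimes g$ through $g \otimes \id$). Cancelling $f$ on the right using the epimorphism hypothesis then gives the required equation, completing the proof.
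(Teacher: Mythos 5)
Your proposal is correct and matches the paper's proof, which is exactly the computation from Lemma~\ref{deterministic_composition}\ref{2of3} with the positivity equation~\eqref{positive} (applied to the pair $(f,g)$, using determinism of $gf$) substituted for the determinism of $f$ in the second step, followed by cancelling the epimorphism $f$. Your orientation check of~\eqref{positive} is also right: the $g$-branch sits on the factor matching $g\otimes\id_Y$ in the factorization of $g\otimes g$.
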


\begin{proof}
	This is by the same computation as in the proof of \Cref{deterministic_composition}, now using the positivity axiom for the second step.
\end{proof}

We finally consider another property which will be of use in some of our later proofs. As we show in \Cref{eq_strengthen}, this axiom is closely related to the notion of \emph{equality strengthening} introduced by Cho and Jacobs~\cite[Definition~5.7]{cho_jacobs} (\Cref{eq_strengthen}).

\begin{defn}
	\label{causal_defn}
	We say that $\C$ is \emph{causal} if the following condition holds: whenever
	\[
		\input{causal1.tikz}
	\]
	holds for any morphisms as indicated, then also
	\beq
		\label{causal_conc}
		\input{causal2.tikz}
	\eeq
\end{defn}

The name of this condition indicates that we think of it as a causality property: with time going up, we assume that the choice of using $h_1$ or $h_2$ at a late time has no influence on how its outcome is correlated with what happened at an earlier time; and then if causality holds, we can conclude that there is no correlation with what happened at an even earlier time either.

\begin{rem}
	One may wonder how our axiom relates to the naturality condition on the counit~\eqref{counit_nat}, which is sometimes also called \emph{causality}~\cite{CL,CDP}. So which one of the two actually expresses causality, and correspondingly deserves to carry the name \emph{causality axiom}? Our perspective is that they \emph{both} capture disjoint aspects of the causality of information flow. It is an open question, suggested to us by Thomas C.~Fraser, whether there is an interesting causality axiom or axiom schema which comprises both as special cases, and which could be argued to capture all aspects of causality. 

	It may also be the case that our \Cref{positive_defn} and \Cref{causal_defn} are instances of a more comprehensive axiom comprising both as special cases. We have not yet found any such condition either.
\end{rem}

The implication postulated by the causality axiom holds automatically in many cases: for example if $f$ is deterministic; if $g$ is deterministic; if $f$ is an epimorphism; if $gf = g$; or if $\C$ has conditionals (\Cref{cond_causal}). It is correspondingly nontrivial to find examples where our causality condition fails. Here is one.

\begin{ex}
	\label{noncausal}
	Applying \Cref{comons} with $\D := \Set^\op$ shows that the opposite of the category of commutative monoids with unit-preserving functions is a Markov category. Using additive notation for the monoid operations, \Cref{causal_defn} now states the following: whenever $f$, $g$ and $h_1$, $h_2$ are unit-preserving maps between commutative monoids, and if the equation
	\beq
		\label{pfass}
		f(g(h_1(x) + y)) = f(g(h_2(x) + y))
	\eeq
	holds for all $x$ and $y$, then also the stronger equation
	\beq
		\label{pfconc}
		f(g(h_1(x) + y) + z) = f(g(h_2(x) + y) + z)
	\eeq
	holds for all $x$, $y$ and $z$. To see that this is not true in general, let us specialize to the monoid $(\N,+)$ and all maps of type $(\N,+) \to (\N,+)$, and consider more concretely the maps given by
	\[
		f(x) := \max(x-1,0), \qquad g(x) := \min(x,1), \qquad h_1(x) := x, \qquad h_2(x) := 0,
	\]
	which clearly all preserve the unit $0$. Then $fg = 0$, so that the assumption~\cref{pfass} holds trivially. However, the conclusion~\cref{pfconc} fails with $x = z = 1$ and $y = 0$. We therefore have found a Markov category which is not causal.
\end{ex}

Many uses of the causality condition use it only in a weaker form, where the middle output in~\cref{causal_conc} is marginalized over; see e.g.~the proof of \Cref{probstoch}.

We now show that causality is intimately related to the existence of conditionals.

\begin{prop}
	\label{cond_causal}
	If $\C$ has conditionals, then $\C$ is causal.
\end{prop}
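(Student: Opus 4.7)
The plan is to use the existence of conditionals to factor out the extra $X$-wire in \cref{causal_conc} as a conditional given the $Y$-wire, thereby reducing the conclusion to the hypothesis. Let me denote the intermediate objects so that $f : W \to X$, $g : X \to Y$, $h_i : Y \to Z$, and introduce the morphism
\[
F \;:=\; (g \otimes \id_X) \circ \cop_X \circ f \;:\; W \to Y \otimes X
\]
appearing in the lower half of the conclusion diagram (everything strictly below the $\cop_Y$). Its $Y$-marginal, obtained by composing with $\id_Y \otimes \del_X$, is just $gf$.

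Since $\C$ has conditionals, I would choose a conditional $F_{|Y} : Y \otimes W \to X$ of $F$ given $Y$, so that the chain rule \cref{cond_eq} yields
\[
F \;=\; (\id_Y \otimes F_{|Y}) \circ (\cop_Y \otimes \id_W) \circ (gf \otimes \id_W) \circ \cop_W.
\]
Substituting this identity into the LHS of \cref{causal_conc}, which is $(h_i \otimes \id_Y \otimes \id_X) \circ (\cop_Y \otimes \id_X) \circ F$, and then merging the two consecutive $\cop_Y$'s via coassociativity \cref{comonoid_ass}, the LHS can be redrawn as follows: copy $W$ via $\cop_W$; send one copy through the morphism
\[
K_i \;:=\; (h_i \otimes \id_Y) \circ \cop_Y \circ gf \;:\; W \to Z \otimes Y,
\]
which is exactly the morphism appearing on each side of the hypothesis of \Cref{causal_defn}; then copy the $Y$-output of $K_i$, keep one copy as the final $Y$-output, and feed the other copy together with the second copy of $W$ into $F_{|Y}$ to produce the $X$-output. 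The $Z$-output of $K_i$ becomes the final $Z$-output.

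Having expressed the LHS of \cref{causal_conc} in a form in which the morphism $K_i$ appears as a self-contained sub-diagram with the remainder of the circuit identical for $i=1$ and $i=2$, the hypothesis $K_1 = K_2$ of the causality axiom propagates immediately, yielding \cref{causal_conc}. The only technical obstacle is the string-diagram bookkeeping in the rewrite — one must keep careful track of which copy of $W$ feeds $gf$ versus $F_{|Y}$, and how the two copies of $Y$ are produced by the merged $\cop_Y$'s — but this is routine from the comonoid axioms \cref{comonoid_ass,comonoid_other,delcopyAB}. Worth noting: the argument never uses the $Z$-wire of $K_i$ for anything beyond passing it through to the output, so essentially the same reasoning shows more generally that if $\C$ has conditionals then any two morphisms agreeing on a marginal continue to agree after any further processing of the unmarginalized coordinates.
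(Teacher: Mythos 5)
Your proof is correct and is essentially the paper's own argument: the conditional $F_{|Y}$ you construct is precisely the disintegration morphism $s$ of \Cref{disint} (which the paper obtains by conditioning the very same morphism $F$ on its $Y$-output), and your subsequent substitution into the left-hand side of \cref{causal_conc}, merging of the two copies of $\cop_Y$ by coassociativity, and identification of the hypothesis morphism $K_i$ as a self-contained sub-diagram reproduce the paper's computation step for step. (Your closing aside about morphisms ``agreeing on a marginal'' is stated too loosely to evaluate, but it plays no role in the proof.)
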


\begin{proof}
	In terms of $s$ as in the proof of \Cref{cond_positive},
	\[
		\input{causal_proof.tikz}
	\]
	Since $h_2$ can be replaced by $h_2$ on the right-hand side by assumption, the same holds true on the left-hand side.
\end{proof}

It follows in particular that $\FinStoch$ and $\Gauss$ are causal. The next example shows in particular that causality does not imply the existence of conditionals, and is therefore strictly weaker.

\begin{ex}
	\label{stoch_causal}
	$\Stoch$ is causal despite not having conditionals (\Cref{stoch_disint}). Indeed, with $f : A \to X$ and $g : X \to Y$ and $h_1, h_2 : Y \to Z$, the assumption is that
	\beq
		\label{intass}
		\int_{x \in X} \int_{y \in T} h_1(U|y) \, g(dy|x) \, f(dx|a) = \int_{x \in X} \int_{y \in T} h_2(U|y) \, g(dy|x) \, f(dx|a)
	\eeq
	for all $T \in \Sigma_Y$ and $U \in \Sigma_Z$ and $a \in A$. What we need to prove is that this equation implies a generalization where the domain of integration of $x$ is a general $S \in \Sigma_X$,
	\beq
		\label{intconc}
		\int_{x \in S} \int_{y \in T} h_1(U|y) \, g(dy|x) \, f(dx|a) = \int_{x \in S} \int_{y \in T} h_2(U|y) \, g(dy|x) \, f(dx|a).
	\eeq
	Writing the assumption~\cref{intass} in the form
	\[
		\int_{y \in T} h_1(U|y) \int_{x \in X} g(dy|x) \, f(dx|a) = \int_{y \in T} h_2(U|y) \int_{x \in X} g(dy|x) \, f(dx|a)
	\]
	results in two functions on $Y$ whose integrals over every $T \in \Sigma_Y$ are equal when integrated against the measure $\int_{x \in X} g(dy|x) \, f(dx|a)$. Hence the functions $y \mapsto h_1(U|y)$ and $y \mapsto h_2(U|y)$ are equal almost surely with respect to this measure. Therefore they are also equal almost everywhere with respect to any submeasure $\int_{x \in S} g(dy|x) \, f(dx|a)$ for arbitrary $S \in \Sigma_X$, which is~\cref{intconc}. Hence $\Stoch$ is causal.
\end{ex}

\begin{rem}
	\label{eq_strengthen}
	As pointed out to us by Kenta Cho, our causality condition is closely related to the notion of \emph{equality strengthening} considered by Cho and Jacobs~\cite[Definition~5.7]{cho_jacobs}. Indeed, equality strengthening means that every equation of the form
	\[
		\input{eq_strengthen1.tikz}
	\]
	implies the stronger equation
	\[
		\input{eq_strengthen2.tikz}
	\]
	Now if $\C$ is causal in the sense of \Cref{causal_defn}, then it also admits equality strengthening, as one can see readily by taking $f$ to be $\psi$ and $g$ to be the marginalization map which discards the right output. Furthermore, if one generalizes equality strengthening by replacing the $\psi$ above by an arbitrary morphism with nontrivial input, then it is straightforward to prove that causality and generalized equality strengthening are equivalent.

	There is also an existing proof showing that the existence of conditionals implies equality strengthening in the original sense~\cite[Proposition~5.8]{cho_jacobs}, which now also follows from \Cref{cond_causal}. Similarly, the proof that $\Stoch$ has equality strengthening given by Cho and Jacobs~\cite[Proposition~5.9]{cho_jacobs} now follows from \Cref{stoch_causal}. On the other hand, Kenta Cho has informed us that equality strengthening is enough for the proof of our \Cref{probstoch}\ref{probstoch_welldef}.
\end{rem}

\section{Conditional independence and the semigraphoid properties}
\label{cind}

Conditional independence is a central concept in probability, for example due to its importance for statistics~\cite{dawid} and its basic role in the definition of graphical model~\cite{lauritzen_book}. Generalizing conditional independence to arbitrary Markov categories is therefore a natural part of developing synthetic probability theory. Among other things, a formal treatment of conditional independence at our level of generality enables the interpretation of graphical models in \emph{any} Markov category (although we will not elaborate on this further).

Traditionally, conditional independence is closely tied to conditioning. For example in $\FinStoch$, a distribution $\psi : I \to X \otimes W \otimes Y$ is considered to display the conditional independence of $X$ and $Y$ given $W$ if the conditional $\psi_{|W}(xy|w)$ satisfies the factorization
\[
	\psi_{|W}(xy|w) = \psi_{|W}(x|w) \, \psi_{|W}(y|w),
\]
or a number of equivalent reformulations of this equation, such as $\psi(xyw) \psi(w) = \psi(xw) \psi(yw)$. In the context of Markov categories with conditional distributions, conditional independence has been defined by Cho and Jacobs~\cite{cho_jacobs}, who proved that some of the usual equivalences hold at this level of generality, where they can be derived in terms of string diagrams. In~\cite[Proposition~6.10]{cho_jacobs}, Cho and Jacobs also showed that their generalized notion of conditional independence satisfies the well-known semigraphoid properties~\cite{PP},~\cite[Section~2.2.2]{studeny_book}. In a somewhat different technical setup, similar definitions had also been made earlier by Coecke and Spekkens~\cite[Section~5.1]{CS}, who also proved the equivalence of various definitions~\cite[Proposition~5.3]{CS} under strong assumptions on the existence of their version of conditionals.

Here, we will show that it is still meaningful to introduce and work with conditional independence in the absence of conditionals. However, now the single notion of conditional independence splits into several distinct ones. For example, one may want to say that for a morphism $X \to Y \otimes Z$, the output $Y$ is independent of the input $X$ given the output $Z$ (as in the definition of sufficient statistic, \Cref{suff_defn}). Here, the standard notion of conditional independence does not apply directly, since $X$ does not come with any distribution; it therefore seems pertinent to distinguish this notion of conditional independence from the corresponding one for distributions $I \to X \otimes Y \otimes Z$. We believe that such distinctions may be able to shed further light on Dawid's analysis of the role of conditional independence in statistics~\cite{dawid}, where distinct notions of conditional independence seem to be coming up.

In the following, we introduce the relevant definitions of conditional independence and study some of their properties, proving versions of the semigraphoid properties as well as certain compositionality properties, including conditional products. Our various definitions of conditional independence can be unified by formulating them in terms of disconnecting the relevant string diagrams (\Cref{disconnect}). An interesting application, which we mostly leave to future work, should be to instantiate these definitions in $\Stoch$, the category of measurable spaces and Markov kernels of \Cref{sec_giry}, and to compare it in more detail with existing notions of conditional independence in measure-theoretic probability (\Cref{stoch_cistate}). One difficulty here is that $\Stoch$ does not have conditionals (\Cref{stoch_cond_dist}). Instantiating our definitions and results in this case should give a theory of conditional independence for Markov kernels between products of arbitrary measurable spaces, which may be new (since conditional independence is usually primarily considered for joint distributions of three random variables, or equivalently for a triple of $\sigma$-algebras on a probability space).

Concretely, we start by introducing two simple notions of conditional independence, one denoted $\condindstate{X}{Y}{W}$ for ``distributions'' $I \to X\otimes W \otimes Y$, and one denoted $\condindproc{X}{Y}{A}$ for ``processes'' $A \to X \otimes Y$, and study some of their properties. Subsequently, we will show how to combine both notions into one more general notion of conditional independence for morphisms $A \to X \otimes W \otimes Y$.

\begin{defn}
	\label{cistatedef}
	Let $\C$ be a Markov category. A morphism $\psi : I \to X \otimes W \otimes Y$ in $\C$ \emph{displays the conditional independence} $\condindstate{X}{Y}{W}$ if there are $f : W \to X$, $g : W \to Y$ and $\phi : I \to W$ such that
	\beq
		\label{cistateeq}
		\input{cond_ind_state.tikz}
	\eeq
\end{defn}

\begin{rem}
	\label{cistate_cond}
	In terms of \Cref{formal_notation}, marginalizing over the first and third output shows that $\phi$ must be equal to the marginal $\psi(w)$. Marginalizing over the third output only then shows that $f$ must be a conditional $\psi(xw)_{|W}$ in the sense of \Cref{defn_has_conds}. Similarly, $g$ must be a conditional $\psi(yw)_{|W}$.
\end{rem}

Our definition essentially coincides with the characterization given by Cho and Jacobs in~\cite[Proposition~6.9(2)]{cho_jacobs}, who have shown that it is equivalent to the factorization $\psi_{|W}(xy|w) = \psi_{|W}(x|w) \psi_{|W}(y|w)$, in terms of \Cref{formal_notation}, if conditionals exist. In the case of $\FinStoch$, this implies in particular that \Cref{cistatedef} is equivalent to the usual definition of conditional independence for probability distributions on products of finite sets.

Note that our definition applies regardless of whether $\C$ has conditionals or not. And of course, it also applies when $X$, $W$ or $Y$ are themselves tensor products of other objects. The latter recovers the fact that conditional independence as usually conceived applies to \emph{sets} of random variables~\cite{studeny_book}.

\begin{rem}
	\label{independence}
	As a simple special case going back to Manes~\cite[Section~5]{manes} and Golubtsov~\cite[Section~7.1]{golubtsov2}, for $W = I$ we get the obvious notion of plain \emph{independence}: a morphism $\psi : I \to X \otimes Y$ displays the independence $X \perp Y$ if and only if
	\[
		\input{independence.tikz}	
	\]
	which, in terms of \Cref{formal_notation}, becomes exactly the usual factorization condition $\psi(x,y) = \psi(x) \psi(y)$.
\end{rem}

\begin{rem}
	\label{stoch_cistate}
	It is an interesting problem to work out what \Cref{cistatedef} instantiates to in $\Stoch$. The following is based on discussion with Alex Simpson.

	In $\Stoch$, our definition directly coincides with the one given by Dawid and Studen\'y via~\cite[2.2.2 \& Eq.~(3)]{DS}; while their definition uses regular conditional probabilities in place of $f$ and $g$, our $f$ and $g$ must also automatically be conditionals as noted in \Cref{cistate_cond}.

	A more commonly used definition of conditional independence in measure-theoretic probability is arguably the definition of conditional independence for a triple of $\sigma$-algebras on a probability space in terms of conditional expectation~\cite[Section~7.3]{CT}. Applying this definition in the case of a measure $\psi$ on a product $\sigma$-algebra $\Sigma_X \otimes \Sigma_W \otimes \Sigma_Z$, using the three constituting factors, also gives a notion of conditional independence for $\psi : I \to X \otimes W \otimes Y$ in $\Stoch$, in the form
	\beq
		\label{ceci}
		\mathbb{E}[1_{S\cap T}|\Sigma_W] = \mathbb{E}[1_S|\Sigma_W]\, \mathbb{E}[1_T|\Sigma_W]
	\eeq
	for all $S \in \Sigma_X$ and $T \in \Sigma_Y$. (The general definition involving a triple of $\sigma$-algebras is not really more general, since one can always work with the third power of the original space, equipped with the product of the three original $\sigma$-algebras, and equipping this product measurable space with the pushforward probability measure along the diagonal inclusion.) While we currently do not see how to arrive at the more commonly used definition~\cref{ceci} within our framework, we suspect that a Markov category as in \Cref{cond_exp} would do exactly that.
\end{rem}

Returning to the general theory, we can now derive versions of the four semigraphoid properties, which we state in terms of \Cref{formal_notation}.

\begin{lem}
	\label{cistateprops}
	The following hold:
	\begin{enumerate}
		\item Symmetry: if $\psi(x,w,y)$ displays $\condindstate{X}{Y}{W}$, then $\psi(y,w,x)$ displays $\condindstate{Y}{X}{W}$.
		\item Decomposition: if $\psi(x_1,x_2,w,y)$ displays $\condindstate{(X_1\otimes X_2)}{Y}{W}$, then the marginal $\psi(x_1,w,y)$ displays $\condindstate{X_1}{Y}{W}$.
		\item\label{cstate} Contraction: if $\psi(x,w,z,y)$ displays $\condindstate{X}{Y}{(W\otimes Z)}$ and the marginal $\psi(x,w,z)$ displays $\condindstate{X}{Z}{W}$, then $\psi$ also displays $\condindstate{X}{(Z\otimes Y)}{W}$.
	\end{enumerate}
	Moreover if $\C$ has conditionals, then also:
	\begin{enumerate}[resume]
		\item\label{wustate} Weak union: if $\psi(x_1,x_2,w,y)$ displays $\condindstate{(X_1\otimes X_2)}{Y}{W}$, then it also displays $\condindstate{X_1}{Y}{(W\otimes X_2)}$.
	\end{enumerate}
\end{lem}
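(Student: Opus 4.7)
\medskip

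\noindent\textbf{Proof plan.} Symmetry is immediate from the definition, since the factorization \eqref{cistateeq} is manifestly symmetric in $X$ and $Y$: if $\phi, f, g$ witness $\condindstate{X}{Y}{W}$, then the same data (after swapping the $X$ and $Y$ outputs) witness $\condindstate{Y}{X}{W}$. Decomposition is likewise routine: if $\phi$, $f : W \to X_1 \otimes X_2$, and $g$ witness $\condindstate{(X_1\otimes X_2)}{Y}{W}$, then marginalizing the first factor over $X_2$ amounts to composing $f$ with $\id_{X_1} \otimes \del_{X_2}$, and by naturality of $\del$ this gives a new $f' : W \to X_1$ that together with the same $\phi$ and $g$ witnesses $\condindstate{X_1}{Y}{W}$.

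For contraction, fix factorizations
$\psi = (f_1 \otimes \id_{W\otimes Z} \otimes g_1) \circ \cop_{W\otimes Z} \circ \phi_1$
with $f_1 : W \otimes Z \to X$ and $g_1 : W \otimes Z \to Y$, as well as
$\psi_{XWZ} = (f_2 \otimes \id_W \otimes h) \circ \cop_{(3),W} \circ \phi_2$
with $f_2 : W \to X$ and $h : W \to Z$, where $\cop_{(3),W}$ denotes the threefold copy. The first step is the diagrammatic identity
\[
\psi = (\id_X \otimes \id_{W\otimes Z} \otimes g_1) \circ (\id_X \otimes \cop_{W\otimes Z}) \circ \psi_{XWZ},
\]
which follows from the coassociativity of $\cop_{W \otimes Z}$ applied to the first factorization. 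Substituting the second factorization of $\psi_{XWZ}$ on the right-hand side and using the compatibility between copying and tensor products \eqref{copydiag} to split $\cop_{W\otimes Z}$ into $\cop_W$ and $\cop_Z$, one obtains exactly the desired factorization
\[
\psi = (f_2 \otimes \id_W \otimes k) \circ \cop_{(3),W} \circ \phi_2,
\]
where $k : W \to Z \otimes Y$ is the composite that copies $W$, applies $h$ to one copy to produce $Z$, copies this $Z$, and feeds the remaining $W$ and one copy of $Z$ into $g_1$. Concretely, $k(z,y\mid w) = h(z\mid w)\,g_1(y\mid w,z)$ in the formal notation of \Cref{formal_notation}. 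The main bookkeeping obstacle is shuffling $\cop_{W\otimes Z}$ into copies of $W$ and $Z$ separately and redistributing them among $f_2$, the middle identity, $h$, and $g_1$; this is pure string-diagram rewriting using \cref{comonoid_ass,comonoid_other,delcopyAB}.

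For weak union, fix a factorization $\psi = (f \otimes \id_W \otimes g) \circ \cop_{(3),W} \circ \phi$ with $f : W \to X_1 \otimes X_2$ and $g : W \to Y$. Using the hypothesis that $\C$ has conditionals, pick $f_{|X_2} : X_2 \otimes W \to X_1$ as in \Cref{defn_has_conds}, so that $f = (f_{|X_2} \otimes \id_{X_2}) \circ (\swap_{X_2,W} \otimes \id_{X_2}) \circ (\id_W \otimes \cop_{X_2}) \circ (\id_W \otimes f_{X_2})$ where $f_{X_2} : W \to X_2$ is the $X_2$-marginal of $f$. Substituting this expression for $f$ into the given factorization of $\psi$, then regrouping, shows that $\psi$ factors as
\[
(f_{|X_2} \otimes \id_{W\otimes X_2} \otimes g') \circ \cop_{W\otimes X_2} \circ \phi',
\]
where $g' := g \circ (\id_W \otimes \del_{X_2}) : W \otimes X_2 \to Y$ and $\phi' : I \to W \otimes X_2$ is the $(W\otimes X_2)$-marginal of $\psi$, namely $(\id_W \otimes f_{X_2}) \circ \cop_W \circ \phi$. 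This exhibits $\psi$ as displaying $\condindstate{X_1}{Y}{(W\otimes X_2)}$.

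The main obstacle throughout is notational rather than conceptual: formalizing the maneuvers above as actual string-diagram rewrites requires careful use of the compatibility of $\cop$ with the monoidal structure \eqref{copydiag} and the naturality of $\del$ \eqref{counit_nat}. The use of conditionals appears only in weak union, which is consistent with the fact that the other three properties are proven in full generality.
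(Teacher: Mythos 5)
Your proposal is correct and follows essentially the same route as the paper: symmetry and decomposition are immediate, contraction is proved by substituting the $\condindstate{X}{Z}{W}$ factorization of the marginal into the identity $\psi(x,w,z,y)=\psi_{XWZ}(x,w,z)\,g_1(y|w,z)$ and absorbing $h(z|w)\,g_1(y|w,z)$ into a single morphism $W\to Z\otimes Y$, and weak union is proved by expanding $f$ via a conditional $f_{|X_2}$ and regrouping, exactly as in the paper. The only blemish is a typo in your displayed composite for $f$ in the weak-union step (it is missing the initial $\cop_W$ and has the swap indices transposed), but your surrounding description makes clear the intended and correct construction.
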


\begin{rem}
On first look, these statements may sound more complicated than the usual semigraphoid properties, since they state explicitly \emph{which morphism} displays the conditional independence, where this morphism is taken to be the given joint distribution of all variables under consideration. But this is implicitly contained also in the standard way of phrasing the semigraphoid properties, and our formulation merely makes this more explicit.
\end{rem}

\begin{proof}
	The first two statements are trivial. Concerning \ref{cstate}, we have the assumptions
	\[
		\input{contraction_proof1.tikz}
	\]
	\[
		\input{contraction_proof2.tikz}
	\]
	for suitable additional morphisms as indicated, where the final equation follows by marginalizing the first one. Substituting this third equation into the first one gives
	\[
		\input{contraction_proof3.tikz}
	\]
	which proves the claim.
	
	We now show \ref{wustate}. By the assumption that $\psi(x_1,x_2,w,y)$ displays $\condindstate{(X_1\otimes X_2)}{Y}{W}$, we have $\phi : I \to W$ and $f : W \to X_1 \otimes X_2$ and $g : W \to Y$ such that
	\[
		\input{weak_union_proof1.tikz}
	\]
	By conditioning $f$ on the second output, we can also write this in the form
	\[
		\input{weak_union_proof2.tikz}
	\]
	where now the dashed boxes indicate the desired conditional independence factorization.
\end{proof}

\begin{prob}
	Beyond the semigraphoid properties of \Cref{cistateprops}, does our general notion of conditional independence also satisfy the additional implication between conditional independences in probability found by Studen\'y~\cite{studeny}?
\end{prob}

An interesting compositionality property of joint distributions is the possibility of constructing \emph{conditional products}, as first defined by Dawid and Studen{\'y}~\cite{DS} and generalized in a sheaf-theory-like categorical framework in~\cite{FF}. We now show how to construct conditional products in any Markov category which has conditionals, and we prove that they satisfy the axioms of Dawid and Studen{\'y}. To this end, we work with the conceptually clearer but equivalent framework of \emph{gleaves on a lattice}~\cite[Section~4.1]{FF}.

\begin{defn}
	\label{condprod_defn}
	Suppose that $\C$ has conditionals. Then for any $\psi : I \to X \otimes W$ and $\phi : I \to W \otimes Y$ with equal marginals on $W$,
	\[
		\input{equal_marginals.tikz}
	\]
	the \emph{conditional product} of $\psi$ and $\phi$ is defined by
	\beq
		\label{condprod_def}
		\input{conditional_product2.tikz}
	\eeq
\end{defn}

The calculation
\beq
	\label{condprob_alt}
	\input{condprod_well_defined.tikz}
\eeq
shows that the conditional product does not depend on the particular choice of conditional $\psi_{|W}$, and similarly for $\phi_{|W}$ (compare~\Cref{cond_unique}). Hence $\psi \otimes_W \phi$ is well-defined.

The conditional product forms part of our story on conditional independence as follows.

\begin{prop}
	If $\C$ has conditionals, then the conditional product \cref{condprod_def} is the unique morphism $I \to X \otimes W \otimes Y$ which marginalizes to both $\psi$ and $\phi$ and displays $\condindstate{X}{Y}{W}$.
\end{prop}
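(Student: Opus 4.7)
The plan is to verify existence and then uniqueness, both by straightforward string-diagram manipulations using the defining property of conditionals together with the naturality of $\del$.

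For existence, note that the conditional independence $\condindstate{X}{Y}{W}$ is essentially built into the definition \cref{condprod_def}: taking $\eta := \pi$ for the common $W$-marginal together with $f := \psi_{|W}$ and $g := \phi_{|W}$ (and interpreting the triple-wire copy of $\pi$ via the notation introduced just before \Cref{sec_mon_mond}), the morphism $\psi\otimes_W \phi$ is displayed in exactly the form required by~\cref{cistateeq}. For the two marginalization properties, I would discard the $Y$-output of $\psi\otimes_W \phi$. By naturality of $\del$ this turns the $\phi_{|W}$ box into a $\del_W$, collapsing the triple copy of $\pi$ into a double copy; what remains is precisely the right-hand side of the defining equation~\cref{cond_eq} for $\psi_{|W}$ (with $A = I$), which by definition equals $\psi$. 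The $X$-marginalization is symmetric.

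For uniqueness, suppose $\rho : I \to X \otimes W \otimes Y$ marginalizes to $\psi$ and $\phi$ and displays $\condindstate{X}{Y}{W}$. Then by \Cref{cistatedef} there exist morphisms $\eta : I \to W$, $f : W \to X$, $g : W \to Y$ with
\[
\rho \;=\; (f \otimes \id_W \otimes g) \circ (\text{triple copy}) \circ \eta.
\]
Marginalizing $\rho$ over both $X$ and $Y$ (and using naturality of $\del$ to kill $f$ and $g$) yields $\eta = \pi$, the common $W$-marginal. Marginalizing $\rho$ over $Y$ alone gives an equation stating exactly that $f$ is a conditional $\psi_{|W}$ in the sense of~\cref{cond_eq}, and symmetrically $g$ is a conditional $\phi_{|W}$. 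Substituting these identifications into the expression for $\rho$ produces precisely the diagram~\cref{condprod_def} defining $\psi \otimes_W \phi$.

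The one subtlety (which is also the only place genuine care is required) is that conditionals are typically non-unique (\Cref{cond_notunique}), so $f$ and $g$ need not coincide on the nose with the specific conditionals $\psi_{|W}$, $\phi_{|W}$ chosen in \Cref{condprod_defn}. However, this is exactly handled by the calculation~\cref{condprob_alt}: the value of the composite does not depend on the particular choice of conditional, since substituting the defining equation of either conditional absorbs the remaining box into the state. Applying this observation twice (once for $f$ versus $\psi_{|W}$ and once for $g$ versus $\phi_{|W}$) shows $\rho = \psi \otimes_W \phi$, completing the uniqueness argument. I expect the verification of the marginalization equation to be the only step requiring nontrivial diagrammatic reasoning; the rest is bookkeeping with the defining property of conditionals.
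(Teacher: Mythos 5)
Your proof is correct and follows essentially the same route as the paper: the paper likewise reads the conditional independence and the marginalization properties directly off the definitions, and for uniqueness invokes the observation (its \Cref{cistate_cond}) that any factorization as in~\cref{cistateeq} forces $\eta$ to be the common $W$-marginal and $f$, $g$ to be conditionals of $\psi$ and $\phi$, with the independence of the choice of conditional handled by the calculation~\cref{condprob_alt}. Your treatment is if anything slightly more explicit about that last well-definedness step, which the paper leaves implicit.
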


\begin{proof}
	The fact that $\psi \otimes_W \phi$ displays $\condindstate{X}{Y}{W}$ is immediate from the definition. The fact that it marginalizes to $\psi$ and $\phi$ is clear by the definition of conditionals (\Cref{defn_has_conds}). Conversely, if $\eta : I \to X \otimes W \otimes Y$ has these properties, then consider a factorization~\cref{cistateeq}. By \Cref{cistate_cond}, the $f$ and $g$ making up that factorization must be the corresponding conditionals of the marginals of $\eta$, i.e.~of $\psi$ and $\phi$. Hence $\eta$ is exactly the conditional product.
\end{proof}

\newcommand{\Pfin}[1]{\mathcal{P}_{\mathrm{fin}}(#1)}

Still assuming that $\C$ has conditionals, we now explain how marginalization and conditional products define a gleaf~\cite{FF} on $\Pfin{O}$, the lattice of finite subsets of any set of objects $O \subseteq \mathrm{Obj}(\C)$. To every $\mathcal{A} \in \Pfin{O}$ we assign the set of morphisms\footnote{Strictly speaking, we need to make a choice of order for this finite tensor product, but the particular choice is irrelevant. One way to do this is to fix any total order on $O$ and order the objects in $\mathcal{A}$ accordingly.} $I \to \bigotimes_{X \in \mathcal{A}} X$. For any inclusion $\mathcal{A} \subseteq \mathcal{B}$, marginalization defines a restriction map
\[
	\C \! \left(I,\,\bigotimes_{X \in \mathcal{B}} X\right) \longrightarrow \C \! \left( I,\, \bigotimes_{X \in \mathcal{A}} X \right),
\]
and it is easy to see that this results in a presheaf on $\Pfin{O}$. The above conditional products equip this presheaf with a gluing operation: in \cite[Definition~4.1.1]{FF}, property (a) holds because a conditional product marginalizes to the two given original morphisms. Condition (c) holds because of the commutativity of conditioning with marginalization described in~\Cref{cond_marginal}. Condition (b) amounts to the claim that for $\psi : I \to X \otimes W$ and $\phi : I \to W \otimes Y \otimes Z$, the conditional product can be computed in two steps,
\beq
	\label{gleaf_ass}
	\psi \otimes_W \phi = (\psi \otimes_W \phi(wy)) \otimes_{W \otimes Y} \phi,
\eeq
where we have again used \Cref{formal_notation}. The proof of this property is more involved: using the alternative representation for the conditional product given on the right of~\Cref{condprob_alt}, we compute
\[
	\input{gleaf_proof.tikz}
\]
where the second step consists of reordering wires and applying\footnote{This step shows that we really need the existence of conditionals in the sense of \Cref{defn_has_conds}, whereas conditional distributions alone in the sense of \Cref{defn_cond_dists} are not enough for our argument to go through.} \Cref{conditional_iterate}. Since this is exactly the desired~\cref{gleaf_ass}, we conclude that we indeed have a gleaf, or equivalently conditional products satisfying the axioms of Dawid and Studen{\'y}. We can therefore apply the theory of gleaves as developed in~\cite{FF}. One consequence is that conditional products enjoy an associativity property~\cite[Proposition~4.3.4]{FF}.

\begin{rem}
	\label{coupling_cat}
	This associativity property of conditional products implies in particular that marginalizing a conditional product in the form~\cref{condprod_def} over $W$ defines the composition in a symmetric monoidal category which we call the \emph{category of couplings}, where:
	\begin{itemize}
		\item Objects are pairs $(X,\pi_X)$ where $\pi_X : I \to X$.
		\item The hom-set between $(X,\pi_X)$ and $(X,\pi_Y)$ is the set of all $\psi : I \to X \otimes Y$ with marginals $\psi(x) = \pi_X$ and $\psi(y) = \pi_Y$.
		\item Composition is given by the conditional product~\cref{condprod_def} marginalized over the middle output.
		\item The identity morphism of $(X,\pi_X)$ is given by $\cop_X\circ\pi_X$.
	\end{itemize}
	It is also straightforward to verify directly that this data defines a category, and that it has a symmetric monoidal structure defined in the obvious way. However, we believe that the compositional structure of conditional products is more accurately captured by the concept of gleaf as described above. The main reason is that this keeps track of the joint distribution of all variables involved, not requiring one to apply marginalization during composition; it also unifies the sequential product and the monoidal product that we will consider more explicitly in \Cref{probstoch} into one kind of gluing.
\end{rem}

This ends our discussion of conditional independence via \Cref{cistatedef}. However, just as with the notion of conditional in \Cref{defn_has_conds}, we expect the concept of conditional independence to be meaningful also for morphisms whose domain is not the monoidal unit. In order to lead up to this, we briefly consider a notion of conditional independence $\condindproc{X}{Y}{A}$ for morphisms $A \to X \otimes Y$ closely related to the original definition of Cho and Jacobs~\cite[Definition~6.6]{cho_jacobs}; and also to the earlier conditions $CI_{2L}$ and $CI_{2R}$ of Coecke and Spekkens~\cite[Section~5.1]{CS}, which takes place in a different categorical setup. 

\begin{lem}
	Given a morphism $f : A \to X \otimes Y$, the following are equivalent:
	\begin{enumerate}
		\item\label{factor} $f(x,y|a) = f(x|a) f(y|a)$ in terms of \Cref{formal_notation}, meaning that
		\[
			\input{cond_ind.tikz}
		\]
		\item\label{cic} There are $g : A \to X$ and $h : A \to Y$ such that 
		\[
			\input{cond_ind2.tikz}
		\]
	\end{enumerate}
\end{lem}

\begin{proof}
	While one direction is trivial, the other is by straightforward marginalization.
\end{proof}

\begin{defn}
	\label{ciprocdef}
	Let $\C$ be a Markov category. If a morphism $f : A \to X \otimes Y$ satisfies these equivalent conditions, then we say that $f$ \emph{displays the conditional independence} $\condindproc{X}{Y}{A}$.
\end{defn}

Here, we use the double bar ``$||$'' notation in order to indicate independence conditional on an input, to be distinguished from independence conditional on an output as in \Cref{cistatedef}.

We now have at least the simplest two of the semigraphoid properties, in terms of \Cref{formal_notation}.

\begin{lem}
	\label{ciprocprops}
	The following hold:
	\begin{enumerate}
		\item Symmetry: if $f(x,y|a)$ displays $\condindproc{X}{Y}{A}$, then $f(y,x|a)$ displays $\condindproc{Y}{X}{A}$.
		\item Decomposition: if $f(x_1,x_2,y|a)$ displays $\condindproc{(X_1\otimes X_2)}{Y}{A}$, then the marginal $f(x_1,y|a)$ displays $\condindproc{X_1}{Y}{A}$.
	\end{enumerate}
\end{lem}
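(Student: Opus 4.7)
The plan is to deduce both properties directly from the two equivalent characterizations of $\condindproc{X}{Y}{A}$ given just before Definition \ref{ciprocdef}, using only the comonoid axioms of Definition \ref{markov_cat}.

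For symmetry, I would work with characterization \ref{cic}. Suppose $f : A \to X \otimes Y$ displays $\condindproc{X}{Y}{A}$ via morphisms $g : A \to X$ and $h : A \to Y$, so that $f = (g \otimes h) \circ \cop_A$ in string-diagrammatic form. The morphism $f' : A \to Y \otimes X$ obtained by post-composing with the symmetry $\swap_{X,Y}$ then satisfies $f' = \swap_{X,Y} \circ (g \otimes h) \circ \cop_A = (h \otimes g) \circ \swap_{A,A} \circ \cop_A = (h \otimes g) \circ \cop_A$, where the last step uses the cocommutativity of $\cop_A$ from \cref{comonoid_other}. This exhibits the desired witnesses $h$ and $g$ for $\condindproc{Y}{X}{A}$.

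For decomposition, I would use the factorization characterization \ref{factor} together with naturality of $\del$ \cref{counit_nat} and the compatibility \cref{delcopyAB} of $\del$ with the monoidal structure. Assume $f : A \to X_1 \otimes X_2 \otimes Y$ satisfies the factorization displayed on the left of the square equation for $\condindproc{(X_1\otimes X_2)}{Y}{A}$. Compose both sides with $\id_{X_1} \otimes \del_{X_2} \otimes \id_Y$. The left-hand side becomes the marginal $f(x_1,y|a)$ by definition of marginalization. On the right-hand side, $\del_{X_2}$ meets one of the two $f$-copies, and by the naturality of $\del$ together with \cref{delcopyAB}, the second output $X_2$ of that $f$ is discarded, yielding the marginal $f(x_1|a)$ on that branch; the $f(y|a)$ branch is unaffected. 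The resulting string diagram is precisely the factored form witnessing $\condindproc{X_1}{Y}{A}$.

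Neither step presents a genuine obstacle: both reduce to routine manipulations of the comonoid axioms, with the only subtlety being to carefully track the bookkeeping of wires in the $n$-fold tensor products. Accordingly, I would present both arguments as short string-diagram calculations, perhaps even omitting the fully drawn diagrams in favor of a brief paragraph each.
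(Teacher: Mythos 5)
Your argument is correct, and the paper itself offers no more than the word ``Straightforward'' here, so your write-up is exactly the expected filling-in of details: symmetry via naturality of $\swap$ plus cocommutativity of $\cop_A$, and decomposition by postcomposing the factored form with $\id_{X_1}\otimes\del_{X_2}\otimes\id_Y$. No gaps.
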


\begin{proof}
	Straightforward.	
\end{proof}

This notion of conditional independence also interacts well with positivity. The following property generalizes the well-known fact that if a probability measure on a product $\sigma$-algebra has a deterministic marginal, then it is a product measure.

\begin{prop}
	\label{marginal_det}
	Let $\C$ be positive (\Cref{positive_defn}). Then if a morphism $f : A \to X \otimes Y$ is such that its marginal $A \to X$ is deterministic, then $f$ displays $\condindproc{X}{Y}{A}$.	
\end{prop}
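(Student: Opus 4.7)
The strategy is to apply the positivity axiom directly to $f : A \to X \otimes Y$ and the marginalization map $p := \id_X \otimes \del_Y : X \otimes Y \to X$. By assumption, the composite $pf : A \to X$ is the given deterministic marginal, so positivity (\Cref{positive_defn}) applies and yields the equation
\[
	(p \otimes \id_{X \otimes Y}) \circ \cop_{X \otimes Y} \circ f \;=\; (pf \otimes f) \circ \cop_A
\]
as morphisms $A \to X \otimes X \otimes Y$.

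Next, I would post-compose both sides with $\id_X \otimes \del_X \otimes \id_Y$, that is, discard the middle $X$-output. On the right-hand side this is straightforward: the discarding acts on the $X$-component of the second $f$, producing $(pf \otimes f') \circ \cop_A$, where $f' := (\del_X \otimes \id_Y) \circ f$ is the other marginal $f(y|a)$ of $f$. Writing $g := pf$ and $h := f'$, the right-hand side becomes $(g \otimes h) \circ \cop_A$, which is exactly the form appearing in condition (b) of the lemma preceding \Cref{ciprocdef}.

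The main (and only nontrivial) step is to verify that after this discarding the left-hand side reduces to $f$ itself. Using the decomposition of $\cop_{X \otimes Y}$ via \cref{copydiag} as $\cop_X \otimes \cop_Y$ composed with the appropriate braiding, the combined operation on the $X \otimes Y$-output of $f$ is $(\id_X \otimes \del_X) \otimes (\del_Y \otimes \id_Y)$ acting on $(\cop_X \otimes \cop_Y)$. By the counit laws of the commutative comonoid structure \cref{comonoid_other}, each factor collapses: $(\id_X \otimes \del_X) \circ \cop_X = \id_X$ and $(\del_Y \otimes \id_Y) \circ \cop_Y = \id_Y$. Hence the whole left-hand side is simply $f$.

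Combining these two computations gives $f = (g \otimes h) \circ \cop_A$ with $g = f(x|a)$ and $h = f(y|a)$, which is exactly condition (b) of \Cref{ciprocdef}. Thus $f$ displays $\condindproc{X}{Y}{A}$. I expect the only subtlety is the careful diagrammatic bookkeeping in the collapsing step, which is a routine application of the comonoid axioms together with \cref{delcopyAB}; everything else is a direct invocation of the positivity axiom.
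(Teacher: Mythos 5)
Your proposal is correct and follows exactly the paper's argument: instantiate the positivity axiom with $g := \id_X \otimes \del_Y$, whose composite with $f$ is the deterministic marginal, and then discard the middle $X$-output of the resulting equation, using the counit laws to collapse the left-hand side to $f$. The only difference is that you spell out the diagrammatic bookkeeping of the collapsing step, which the paper compresses into the phrase ``marginalizing over the second output now gives the desired factorization.''
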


\begin{proof}
	We apply the positivity property of \Cref{positive_defn} with $f$ in place of $f$ and the marginalization morphism 	
	\[
		\begin{tikzpicture}
	\begin{pgfonlayer}{nodelayer}
		\node [style=none] (0) at (-0.5, 1) {};
		\node [style=bn] (1) at (0.5, 0.25) {};
		\node [style=none] (2) at (0.5, -0.5) {};
		\node [style=none] (3) at (-0.5, -0.5) {};
	\end{pgfonlayer}
	\begin{pgfonlayer}{edgelayer}
		\draw (3.center) to (0.center);
		\draw (1) to (2.center);
	\end{pgfonlayer}
\end{tikzpicture}

	\]
	in place of $g$. Then the resulting~\eqref{positive} instantiates to
	\[
		\input{pos_marg_det.tikz}
	\]
	Marginalizing over the second output now gives the desired factorization.
\end{proof}

\begin{cor}
	Let $\C$ be positive. Then for a morphism $f : A \to X \otimes Y$, the following are equivalent:
	\begin{enumerate}
		\item\label{fdet} $f$ is deterministic.
		\item\label{fmargdet} Both marginals of $f$ are deterministic. 
	\end{enumerate}
\end{cor}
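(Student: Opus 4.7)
The implication \ref{fdet}$\Rightarrow$\ref{fmargdet} is immediate and does not require positivity: each marginal is obtained from $f$ by tensoring with an identity and post-composing with $\del$, and since the structure morphisms $\del$ and $\id$ are deterministic (\Cref{structure_det}, \Cref{comon_det}) and the tensor product of deterministic morphisms is deterministic (again \Cref{structure_det}), the marginals are composites of deterministic morphisms, hence deterministic by \Cref{deterministic_composition}\ref{det_comp}.

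For the converse \ref{fmargdet}$\Rightarrow$\ref{fdet}, the plan is to use \Cref{marginal_det} to rewrite $f$ as a product of its marginals, and then observe that this product is visibly a composite of deterministic morphisms. More precisely, denoting the marginals by $f_X : A \to X$ and $f_Y : A \to Y$, the assumption that (say) $f_X$ is deterministic already implies via \Cref{marginal_det} that $f$ displays $\condindproc{X}{Y}{A}$, so that
\[
	\begin{tikzpicture}
	\begin{pgfonlayer}{nodelayer}
		\node [style=medium box] (1) at (-1, 0.25) {$f$};
		\node [style=none] (2) at (-1.5, 2) {};
		\node [style=none] (3) at (-0.5, 2) {};
		\node [style=none] (4) at (-1, -1.5) {};
		\node [style=none] (5) at (1, 0) {$=$};
		\node [style=bn] (6) at (3, -0.5) {};
		\node [style=morphism] (7) at (2.25, 0.75) {$f_X$};
		\node [style=morphism] (8) at (3.75, 0.75) {$f_Y$};
		\node [style=none] (9) at (2.25, 2) {};
		\node [style=none] (10) at (3.75, 2) {};
		\node [style=none] (11) at (3, -1.5) {};
	\end{pgfonlayer}
	\begin{pgfonlayer}{edgelayer}
		\draw (4.center) to (1);
		\draw (1) to (2.center);
		\draw (1) to (3.center);
		\draw [bend left=45] (6) to (7);
		\draw [bend right=45] (6) to (8);
		\draw (6) to (11.center);
		\draw (7) to (9.center);
		\draw (8) to (10.center);
	\end{pgfonlayer}
\end{tikzpicture}
\]
The right-hand side is the composite $(f_X \otimes f_Y) \circ \cop_A$. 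Now $\cop_A$ is deterministic by \Cref{comon_det}, $f_X$ and $f_Y$ are deterministic by hypothesis, and their tensor product is deterministic by \Cref{structure_det}; applying \Cref{deterministic_composition}\ref{det_comp} once more yields that $f$ itself is deterministic.

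There is no real obstacle here: once \Cref{marginal_det} is invoked, everything reduces to the closure of $\C_\det$ under composition and tensor product, which is already established. Note in particular that only one of the two marginals needs to be deterministic for the argument to work, so that the corollary could in fact be stated in a slightly stronger form.
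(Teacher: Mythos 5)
Your proof of the converse direction is correct, and it is a genuine (if mild) streamlining of the paper's argument. The paper verifies the determinism equation for $f$ by a direct four-diagram string computation, invoking \Cref{marginal_det} twice: once to expand $\cop_{X\otimes Y}\circ f$ into copies of the marginals, and once more at the end to reassemble $(f\otimes f)\circ\cop_A$; the middle step uses the determinism of the marginals. You instead invoke \Cref{marginal_det} once to obtain the factorization $f = (f_X\otimes f_Y)\circ\cop_A$ and then simply quote the already-established closure of $\C_\det$ under composition and tensor product (\Cref{deterministic_composition}\ref{det_comp}, \Cref{structure_det}, \Cref{comon_det}). This buys a shorter proof at the cost of leaning on the structural results of \Cref{sec_det} rather than exhibiting the computation; both are perfectly valid, and the forward direction is handled identically.

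However, your closing remark is wrong: it is not true that only one deterministic marginal suffices, and the corollary cannot be strengthened in the way you suggest. Only the \emph{invocation of \Cref{marginal_det}} needs a single deterministic marginal; the subsequent step, where you conclude that $f_X\otimes f_Y$ is deterministic, requires \emph{both} marginals to be deterministic. A counterexample in $\FinStoch$: take $f = \delta_x \otimes \psi : I \to X\otimes Y$ with $\psi$ any non-Dirac distribution. The $X$-marginal is deterministic, $f$ indeed factors as a product of its marginals, but $f$ is not deterministic since $\psi$ is not. Delete that sentence and the proof stands.
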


\begin{proof}
	Assuming \ref{fdet}, the claim \ref{fmargdet} is clear since the marginalization map itself is deterministic as well. The implication from~\ref{fmargdet} to~\ref{fdet} is a straightforward string diagram computation,
	\[
		\input{product_det.tikz}
	\]
	where the first and third step are by \Cref{marginal_det} and the second one is by assumption.
\end{proof}

We now introduce our more involved combined notion of conditional independence, which has not been considered before.

\begin{defn}
	\label{cigendef}
	A morphism $f : A \to X \otimes W \otimes Y$ \emph{displays the conditional independence} $\condindgen{X}{Y}{W}{A}$ if there are $g : A \to W$, $h : A \otimes W \to X$ and $k : W\otimes A \to Y$ such that
	\[
		\input{cond_ind_gen.tikz}
	\]
\end{defn}

It is straightforward to see that this specializes to \Cref{ciprocdef} for $W = I$ and to \Cref{cistatedef} for $A = I$.

We now have some more complicated but general rules for reasoning about this kind of conditional independence. Again these are versions of the usual semigraphoid properties, phrased in terms of \Cref{formal_notation}.

\begin{prop}
	\label{cigenprops}
	The following hold:
	\begin{enumerate}
		\item Symmetry: if $f(x,w,y|a)$ displays $\condindgen{X}{Y}{W}{A}$, then $f(y,w,x|a)$ displays $\condindgen{Y}{X}{W}{A}$.
		\item Decomposition: if $f(x_1,x_2,w,y|a)$ displays $\condindgen{(X_1\otimes X_2)}{Y}{W}{A}$, then the marginal $f(x_1,w,y|a)$ displays $\condindgen{X_1}{Y}{W}{A}$.
		\item\label{cgen} Contraction: if $f(x,w,z,y|a)$ displays $\condindgen{X}{Y}{(W\otimes Z)}{A}$ and the marginal $f(x,w,z|a)$ displays $\condindgen{X}{Z}{W}{A}$, then $f$ also displays $\condindgen{X}{(Z\otimes Y)}{W}{A}$.
	\end{enumerate}
	Moreover if $\C$ has conditionals, then also:
	\begin{enumerate}[resume]
		\item\label{wugen} Weak union: if $f(x_1,x_2,w,y|a)$ displays $\condindgen{(X_1\otimes X_2)}{Y}{W}{A}$, then it also displays $\condindgen{X_1}{Y}{(W\otimes X_2)}{A}$.
	\end{enumerate}
\end{prop}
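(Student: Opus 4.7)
The strategy is to parameterize the proof of Lemma \ref{cistateprops} by the input object $A$, reusing exactly the same string-diagrammatic manipulations but with an extra input wire threaded through the diagrams. Parts (a) symmetry and (b) decomposition follow immediately from the definition: for symmetry one simply swaps the roles of $h$ and $k$ in the factorization from \Cref{cigendef}, and for decomposition one obtains the desired factorization of $f(x_1,w,y|a)$ by marginalizing the morphism $h : A \otimes W \to X_1 \otimes X_2$ over its $X_2$-output, keeping $g$ and $k$ unchanged.

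For part (c) contraction, the two hypotheses give factorizations
\[
    f(x,w,z,y|a) = h_1(x|a,w,z)\,g_1(w,z|a)\,k_1(y|w,z,a)
\]
and, from the marginal over $Y$,
\[
    f(x,w,z|a) = h_2(x|a,w)\,g_2(w|a)\,k_2(z|w,a).
\]
The plan is to first marginalize the first factorization over $Y$ and equate the result with the second, which gives a string-diagram identity between two representations of $f(x,w,z|a)$. Substituting this identity back into the first factorization (with the $Y$-producing branch $k_1$ put on top) yields
\[
    f(x,w,z,y|a) = h_2(x|a,w)\,g_2(w|a)\,k_2(z|w,a)\,k_1(y|w,z,a).
\]
Taking $k_3 : W \otimes A \to Z \otimes Y$ to be the composite of $k_2$ and $k_1$ wired so that $k_1$ consumes the $Z$-output of $k_2$ together with copies of $W$ and $A$, this exhibits $\condindgen{X}{(Z\otimes Y)}{W}{A}$. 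The string-diagram computation here will be essentially the one displayed in the proof of \Cref{cistateprops}\ref{cstate}, with an additional $A$-wire feeding into $h_i$, $g_i$, $k_i$.

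For part (d) weak union, conditionals are used to split the morphism $h : A \otimes W \to X_1 \otimes X_2$ appearing in the assumed factorization $f(x_1,x_2,w,y|a) = h(x_1,x_2|a,w)\,g(w|a)\,k(y|w,a)$ as
\[
    h(x_1,x_2|a,w) = h_{|X_2}(x_1|x_2,a,w)\,h(x_2|a,w),
\]
which exists by \Cref{defn_has_conds}. Then defining $g'(w,x_2|a) := h(x_2|a,w)\,g(w|a)$, $h'(x_1|a,w,x_2) := h_{|X_2}(x_1|x_2,a,w)$, and letting $k'(y|w,x_2,a) := k(y|w,a)$ (which simply discards the extra $X_2$-input) gives the required factorization witnessing $\condindgen{X_1}{Y}{(W\otimes X_2)}{A}$.

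The main obstacle I anticipate is purely notational: making sure that in part (c) the extra $A$-wire is routed correctly through the multi-step diagram rearrangement so that it is copied and delivered to both $h_2$ and the composite replacing $(k_2,k_1)$ in a way consistent with the original factorization. Once the bookkeeping is done, each intermediate equality reduces either to naturality of copy, to the comonoid axioms, or to the substitution permitted by one of the two hypotheses, exactly as in \Cref{cistateprops}.
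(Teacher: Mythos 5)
Your proposal is correct and follows essentially the same route as the paper, which itself disposes of (a) and (b) as trivial and describes (c) and (d) as straightforward generalizations of the corresponding proofs of Lemma \ref{cistateprops} obtained by threading the extra $A$-wire through the diagrams. Your treatment of weak union via conditioning $h$ on its $X_2$-output and regrouping, and of contraction via substituting the second factorization into the marginalized part of the first with the $Y$-branch rewired to the output copies, matches the paper's intended argument; in fact you supply more detail than the paper does.
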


\begin{proof}
	Again the first two implications are trivial. The proof of~\ref{cgen} and~\ref{wugen} are simple but somewhat tedious generalizations of the corresponding proofs for \Cref{cistateprops}, which we leave to the reader.
\end{proof}

Moreover, conditional independence enjoys certain compositionality properties, formulated as follows in terms of \Cref{formal_notation}.

\begin{prop}
	\label{condind_compprop}
	The following hold:
	\begin{enumerate}
		\item\label{condind_times} If a morphism $f(x_1,w_1,y_1|a_1)$ displays $\condindgen{X_1}{Y_1}{W_1}{A_1}$ and $g(x_2,w_2,y_2|a_2)$ displays $\condindgen{X_2}{Y_2}{W_2}{A_2}$, then $(f\otimes g)(x_1,x_2,w_1,w_2,y_1,y_2|a_1,a_2)$ displays $\condindgen{(X_1\otimes X_2)}{(Y_1\otimes Y_2)}{(W_1\otimes W_2)}{(A_1\otimes A_2)}$.
		\item\label{condind_compose} If a morphism $f(x_1,v,y_1|a)$ displays $\condindgen{X_1}{Y_1}{V}{A}$ and $g(x_2,w,y_2|v)$ displays $\condindgen{X_2}{Y_2}{W}{V}$, then also the composite morphism
			\[
				\input{cond_ind_compose.tikz}
			\]
			displays $\condindgen{(X_1\otimes X_2)}{(Y_2\otimes Y_1)}{(V\otimes W)}{A}$.
		\item\label{post_process} If a morphism $f(x_1,w,y_2|a)$ displays $\condindgen{X_1}{Y_1}{W}{A}$ and $g : A \otimes X_1 \to X_2$ and $h : Y_1 \otimes A \to Y_2$ are arbitrary, then also the composite morphism
			\[
				\input{cond_ind_compose2.tikz}
			\]
			displays $\condindgen{X_2}{Y_2}{W}{A}$.
		\item\label{condind_comp} If $\C$ has conditionals and a morphism $f(x,v,w,y|a)$ displays $\condindgen{X}{Y}{(V\otimes W)}{A}$, then there is a conditional $f_{|V}(x,w,y|v,a)$ which displays $\condindgen{X}{Y}{W}{(V\otimes A)}$.
	\end{enumerate}
\end{prop}

While property~\ref{condind_compose} is somewhat reminiscent of contraction, property~\ref{condind_comp} is similar to weak union.

\begin{proof}
	\ref{condind_times} and~\ref{post_process} are straightforward. For \ref{condind_compose}, note that the assumptions imply that the composite morphism has the form
	\[
		\input{condind_compose_proof.tikz}
	\]
	where we leave the relevant morphisms unnamed. The dashed boxes indicate the claimed conditional independence decomposition.

	Concerning \ref{condind_comp}, the assumption implies that $f$ has the form
	\[
		\input{condind_comp_ass.tikz}
	\]
	We construct a new morphism which involves a conditional $g_{|V}$, 
	\[
		\input{condind_comp.tikz}	
	\]
	which clearly displays $\condindgen{X}{Y}{W}{(V\otimes A)}$. Furthermore, we verify that this morphism is a conditional $f_{|V}$,
	\[
		\input{complicated_conditional1.tikz}
	\]
	\[
		\input{complicated_conditional2.tikz}
	\]
	which finishes the proof.
\end{proof}

There is one more notion of conditional independence, which will be of relevance to us in \Cref{suff}. We do not yet know how it is related to the one of \Cref{cigendef}.

\begin{defn}
	\label{def_cimarkov}
	Given a morphism $f : A \to X \otimes Y$, we say that $f$ \emph{displays the conditional independence} $\condindmarkov{A}{Y}{X}$ if it is of the form
	\[
		\input{condind_markov.tikz}
	\]
\end{defn}

Here, the diagram on the right represents a Markov process, from $A$ to $X$ to $Y$. This notion of conditional independence appears in the definition of sufficient statistic (\Cref{suff_defn}).

For $A = I$, one would expect that this conditional independence always holds. This is indeed the case as soon as $\C$ has conditional distributions, since the conditional independence equation for $\condindmarkov{I}{Y}{X}$ amounts to precisely the existence of a conditional for the given morphism $I \to X \otimes Y$.

Again we have versions of the semigraphoid properties in terms of \Cref{formal_notation}, except for symmetry which no longer can even be stated.

\begin{prop}
	\label{cimarkovprops}
	The following hold:
	\begin{enumerate}
		\item\label{dmarkov} Right decomposition: if $f(x,y_1,y_2|a)$ displays $\condindmarkov{A}{(Y_1\otimes Y_2)}{X}$, then the marginal $f(x,y_1|a)$ displays $\condindmarkov{A}{Y_1}{X}$.
		\item\label{cmarkov} Contraction: if $f(x,w,y|a)$ displays $\condindmarkov{A}{Y}{(W\otimes X)}$ and the marginal $f(x,w|a)$ displays $\condindmarkov{A}{W}{X}$, then $f$ also displays $\condindmarkov{A}{(W\otimes Y)}{X}$.
	\end{enumerate}
	Moreover if $\C$ has conditionals, then also:
	\begin{enumerate}[resume]
		\item\label{wumarkov} Weak union: if $f(x,w,y|a)$ displays $\condindmarkov{A}{(W\otimes Y)}{X}$, then it also displays $\condindmarkov{A}{Y}{(W\otimes X)}$.
	\end{enumerate}
\end{prop}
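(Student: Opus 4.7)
The plan for proving \Cref{cimarkovprops} is to unfold the Markov-chain factorization from \Cref{def_cimarkov} for each hypothesis, perform some routine copy-morphism manipulations, and repackage into the factorization the conclusion asks for. The first two parts should be essentially tautological diagrammatic rearrangements, while only weak union \ref{wumarkov} will use the existence of conditionals in any essential way.

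For right decomposition \ref{dmarkov}, I would start from witnesses $m : A \to X$ and $k : X \to Y_1 \otimes Y_2$ for $\condindmarkov{A}{(Y_1 \otimes Y_2)}{X}$, so that $f = (\id_X \otimes k) \circ \cop_X \circ m$. Marginalizing $f$ over $Y_2$ amounts to post-composing with $\id_X \otimes \id_{Y_1} \otimes \del_{Y_2}$; sliding $\del_{Y_2}$ past the identity and absorbing it into $k$ replaces $k$ by its $Y_1$-marginal $k' : X \to Y_1$, yielding the factorization of $f(x,y_1|a)$ required for $\condindmarkov{A}{Y_1}{X}$.

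For contraction \ref{cmarkov}, I would take witnesses $g : A \to X \otimes W$ and $k : W \otimes X \to Y$ from the first hypothesis and $m : A \to X$ and $h : X \to W$ from the second (swapping tensor factors as needed to match the orders). Substituting the second factorization into the first and regrouping using coassociativity of $\cop_X$ (\cref{comonoid_ass}) together with the compatibility of $\cop$ with $\otimes$ (\cref{delcopyAB}), the composite rearranges as $f = (\id_X \otimes k') \circ \cop_X \circ m$, where $k' : X \to W \otimes Y$ is built by copying $X$, applying $h$ on one copy to produce $W$, copying that $W$, and feeding the remaining $X$-copy together with one $W$-copy into $k$. This is exactly the witness for $\condindmarkov{A}{(W \otimes Y)}{X}$.

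For weak union \ref{wumarkov}, starting from witnesses $m : A \to X$ and $k : X \to W \otimes Y$ for $\condindmarkov{A}{(W\otimes Y)}{X}$, I would invoke the assumption that $\C$ has conditionals (\Cref{defn_has_conds}) to extract a conditional $k_{|W} : W \otimes X \to Y$ of $k$ on its $W$-output. Taking $g : A \to X \otimes W$ to be the $Y$-marginal of $f$ and applying the chain rule for $k_{|W}$ rewrites $k$ as the morphism that first copies $X$, applies the $W$-marginal of $k$ on one copy, copies the resulting $W$, and then applies $k_{|W}$ to the other $X$-copy paired with one $W$-copy. Substituting this expression for $k$ back into $f$ and regrouping exhibits $f$ as $(\id_{W \otimes X} \otimes k_{|W}) \circ \cop_{W \otimes X} \circ g$ (modulo the obvious swap), witnessing $\condindmarkov{A}{Y}{(W \otimes X)}$. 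The only real obstacle I anticipate is wiring bookkeeping: in weak union, the copy of $X$ that feeds the $W$-marginal of $k$ and the copy of $X$ that feeds $k_{|W}$ must originate from the same copy node, and it is precisely the defining equation of a conditional that ensures this reassembly is legitimate.
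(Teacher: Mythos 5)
Your proposal is correct and follows essentially the same route as the paper: part \ref{dmarkov} by absorbing $\del_{Y_2}$ into the second-stage kernel, part \ref{cmarkov} by substituting the factorization of the marginal into the factorization of $f$ and regrouping the copy nodes, and part \ref{wumarkov} by taking a conditional of the second-stage kernel $k : X \to W \otimes Y$ on its $W$-output and recognizing the remaining factor $k(w|x)\,m(x|a)$ as the $(X,W)$-marginal of $f$. Your closing remark about the two $X$-copies originating from the same copy node is exactly the point where the defining equation \eqref{cond_eq} of conditionals does the work, and it is handled identically in the paper's string-diagram computation.
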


\begin{proof}
	The proof of~\ref{dmarkov} is straightforward. For \ref{cmarkov}, we compute
	\[
		\input{condind_markov_contract.tikz}
	\]
	where the assumptions are used in the first two steps. For \ref{wumarkov}, we get
	\[
		\input{condind_markov_weakunion.tikz}
	\]
\end{proof}

There may be one more notion of conditional independence along the lines of~\cite[Properties $CI_{1L}$, $CI_{1R}$]{CS} and~\cite[Proposition~6.9(3)]{cho_jacobs}, where a morphism $f : A \otimes B \to X$ displays this conditional independence of $A$ and $X$ given $B$ if it can be written in a form which simply discards $A$,
\[
	\input{cic_constant.tikz}
\]
We will not study this notion of conditional independence further in this paper.

\begin{rem}
	\label{disconnect}
	As has been pointed out to us by Rob Spekkens, all of our definitions of conditional independence have a central feature in common,  namely that upon removing all wires from the diagram which refer to objects on which we condition, the diagram disconnects into parts which then become conditionally independent. We hope that this observation may lead to a more unified and more systematic perspective on conditional independence in our setup as a theory of ``conditional connectedness'', providing a general understanding of conditional independence as a form of separation similar to separation in undirected graphical models~\cite{lauritzen_book}, simpler and more intuitive than the complicated \emph{$d$-separation} criterion for conditional independence in Bayesian networks~\cite{pearl}.
\end{rem}

Let us end this section with some comments on the relation to other existing axiomatic systems for conditional independence relations. We have already done so throughout this section for the most well-known such system: the semigraphoid axioms. However, others with a more explicitly categorical flavour have been proposed.

\subsection*{Franz's notion of independent morphisms.} Franz~\cite{franz}, and later Gerhold, Lachs and Sch\"urmann~\cite{GLS}, have developed a categorical framework for independence which unifies and generalizes several notions of independence that had previously been considered, including ordinary stochastic independence in probability theory and free independence in free probability. However, the basic setup is quite different, and involves working with a category of random variables, meaning measure-preserving maps between probability spaces, instead of a category of Markov kernels. In our setup, involving a Markov category $\C$ whose morphisms we think of as Markov kernels between measurable spaces, we can \emph{define} a probability space as a pair $(X,\psi)$ consisting of an object $X \in \C$ and a morphism $\psi : I \to X$. A \emph{random variable} or \emph{measure-preserving map} from $(X,\psi)$ to $(Y,\phi)$ is then a deterministic morphism $f : X \to Y$ such that $f\circ \psi = \phi$. In other words, we consider the comma category $I/\C_\det$ on $\C$, where $\C_\det \subseteq \C$ is the subcategory of deterministic morphisms, as our category of probability spaces\footnote{See \Cref{prob_cat} for a more refined approach.}. This category is then again semicartesian, so that Franz's notion of independence applies. Instantiating his definition then gives the following, for $(X,\psi)$ a probability space in our sense: deterministic morphisms $f : X \to Y$ and $g : X \to Z$ are independent if and only if there is deterministic $h : X \to Y \otimes Z$ satisfying
\[
	\input{franz_independent.tikz}
\]
Due to the cartesianness of the monoidal subcategory of deterministic morphisms (\Cref{rem_det_cartesian}), we can conclude that $h$ is necessarily equal to the tupling $(f,g) : X \to Y \otimes Z$, the unique deterministic morphism which projects to $f$ and $g$. Therefore $f$ and $g$ are independent in Franz's sense if and only if $(f,g) \circ \psi$ displays the independence $Y \perp Z$ in the sense of \Cref{independence}. Note that $(f,g) \circ \psi : I \to Y \otimes Z$ can be interpreted as the joint distribution of the random variables $f$ and $g$.

In summary, Franz's definition keeps track explicitly of the sample space on which random variables are defined (the $X$ in our discussion above), while our notion of conditional independence of \Cref{cistatedef} does not. And there is no need to, as independence is a property only of the \emph{joint distribution} of two random variables, regardless of the underlying sample space. Apart from this difference, Franz's notion of independence, when instantiated in categories of probability spaces which are constructed from Markov categories, is essentially equivalent to the one of \Cref{independence}. 

\subsection*{Simpson's axioms for conditional independence.} Simpson~\cite{simpson} has proposed rather general axioms for independence and conditional independence in a categorical framework. When applied to probabilistic independence, one needs to work with categories of probability spaces, just as in Franz's approach. The key difference is that Simpson does not define what (conditional) independence means in such a category, but rather proposes \emph{axioms} which any reasonable notion of conditional independence should satisfy. We have not yet undertaken a more detailed comparison with Simpson's approach.

\section{Almost surely}
\label{sec_as}

A notion that appears in almost every theorem of probability is \emph{almost surely}. In the context of Markov categories, an elegant axiomatization of almost sure equality has been given by Cho and Jacobs~\cite[Definition~5.1]{cho_jacobs}. Here, we generalize their definition and prove further properties. We also put forward the idea that many other concepts can be relativized with respect to almost surely, and illustrate this by introducing an ``almost surely'' variant of deterministic morphisms (\Cref{defn_det}). This concept then will be shown to interact well with almost sure equality. As an application, we construct the category of probability spaces internal to any Markov category $\C$. 

The following definition specializes to one given by Cho and Jacobs~\cite[Definition~5.1]{cho_jacobs} in the case $\Theta = I$. Our choice of notation $p : \Theta \to X$ anticipates statistical models (\Cref{def_stat_model}), indicating that almost sure equality is particularly relevant in a context where $\Theta$ is a parameter space for a statistical model living on a sample space $X$.

\begin{defn}
	\label{defn_as}
	Given any morphism $p : \Theta \to X$, we say that $f,g : X \to Y$ are \emph{$p$-a.s.~equal} and write $f =_{p\as} g$ if
	\[
		\input{asequal.tikz}
	\]
\end{defn}

\begin{ex}
	\label{finstoch_as}
	In $\FinStoch$, we have $f =_{p\as} g$ if and only if $f(y|x) = g(y|x)$ for all $x \in X$ for which there is $a \in A$ with $p(x|a) > 0$.
\end{ex}

\begin{ex}
	\label{stoch_as}
	In $\Stoch$, we have $f =_{p\as} g$ if and only if
	\beq
		\label{stoch_as_eq}
		\int_S f(T|x) \, p(dx|a) = \int_S g(T|x) \, p(dx|a)
	\eeq
	for all $a\in A$ and $S \in \Sigma_X$ and $T \in \Sigma_Y$. This is equivalent to the functions $f(T|-), g(T|-) : X \to [0,1]$ being almost surely equal with respect to the probability measure $p(-|a)$ on $X$ for every parameter value $a$, generalizing~\cite[Proposition~5.3]{cho_jacobs}.
	
	If $f$ and $g$ are the images in $\Stoch$ of morphisms in $\Meas$, which by abuse of notation we also denote by $f, g : X \to Y$, then this condition is equivalent to
	\[
		p(S \cap f^{-1}(T)|a) = p(S \cap g^{-1}(T)|a)
	\]
	for all $a \in A$, $S \in \Sigma_X$ and $T \in \Sigma_Y$. Using the additivity of the probability measure $p(-|a)$, this is readily seen to be equivalent to the condition that
	\[
		p\!\left(f^{-1}(T) \setminus g^{-1}(T)\right|a) = 0 = p\!\left(g^{-1}(T) \setminus f^{-1}(T)\right|a)
	\]
	for all $a \in A$ and $T \in \Sigma_Y$. Expressed in terms of symmetric difference $\triangle$, we can also write this as
	\[
		p\!\left(f^{-1}(T) \,\triangle\, g^{-1}(T)\right|a) = 0,
	\]
	which is a known condition\footnote{It had been introduced to us by Robert Furber and seems to be due to Edgar~\cite[Section~1]{edgar_sections}.} equivalent to almost sure equality in the standard sense for countably generated $\sigma$-algebras~\cite[343F]{fremlin3}, but known to be different in general: Fremlin~\cite[343J]{fremlin3} presents an intuitive example of a measurable and measure-preserving map $f$ from a measure space $(X,\Sigma_X,p)$ to itself such that although $f(x) \neq x$ \emph{for all} $x \in X$ and $\Sigma_X$ separates the points of $X$, the map $f$ is $p$-almost surely equal to $\id_X$ in our sense.
	
	Generally speaking, our working hypothesis is that~\cref{stoch_as_eq} is the ``correct'' notion of almost sure equality in $\Stoch$. This hypothesis will be supported by our results of the following sections, where we find that using~\cref{stoch_as_eq} as almost sure equality recovers standard concepts, such as bounded completeness (\Cref{stoch_complete}).
\end{ex}

Returning to the general theory, here is a basic property of a.s.~equality which follows directly from the definition.

\begin{lem}
	\label{as_compose}
	Let $p : \Theta \to X$. If $f =_{p\as} g$ for $f, g : X \to Y$ and $h : Y \to Z$, then also $hf =_{p\as} hg$.
\end{lem}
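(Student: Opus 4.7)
The proof is essentially immediate from the definition of $p$-almost sure equality, and requires only postcomposition with $h$ on the relevant output wire. My plan is as follows.

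First, I would recall what the hypothesis $f =_{p\as} g$ unfolds to: the two string diagrams obtained by copying the output of $p$ and then applying $f$ (respectively $g$) on one of the two resulting wires agree. In compact notation, this is the equality $(f \otimes \id_X) \circ \cop_X \circ p = (g \otimes \id_X) \circ \cop_X \circ p$. The goal is the analogous equality with $hf$ and $hg$ in place of $f$ and $g$.

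Second, I would postcompose both sides of the hypothesis with the morphism $h \otimes \id_X : Y \otimes X \to Z \otimes X$. By the bifunctoriality of the tensor product, this transforms $f \otimes \id_X$ into $(h \otimes \id_X) \circ (f \otimes \id_X) = (hf) \otimes \id_X$, and similarly on the right-hand side. Diagrammatically, this is nothing more than stacking a copy of $h$ on top of the $f$-wire (and the $g$-wire) in the two diagrams witnessing the hypothesis; the rest of the string diagram, including the copy of $p$ and the identity wire carrying $X$, is unchanged.

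Third, reading off the resulting string diagrams gives exactly the definition of $hf =_{p\as} hg$, which completes the proof. There is no real obstacle here: the whole argument is just functoriality of the tensor product together with associativity of composition. If desired, I would present the argument purely in string-diagram form by drawing the two diagrams side by side and indicating that $h$ is placed on top of the left branch in both.
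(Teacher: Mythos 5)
Your proof is correct and is exactly the argument the paper intends: the paper states the lemma as following directly from the definition, and postcomposing the defining equality with $h \otimes \id_X$ is precisely that one-line verification.
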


Furthermore, an occasionally useful fact is that if $g =_{p\as} f$, then the defining equation still holds with more than one copy operation taking place before the application of $f$ or $g$. This is a special case of the following:

\begin{lem}
	\label{lem_asequal_general}
	Let $p : \Theta \to X$. If $f_1, f_2 : X \to Y$ and $g_1, g_2 : X \to Z$ are such that $f_1 =_{p\as} f_2$ and $g_1 =_{p\as} g_2$, then also
	\[
		\input{asequal_general.tikz}
	\]
\end{lem}

\begin{proof}
	Straightforward application of the assumptions and the associativity and commutativity of the comultiplication.	
\end{proof}

A similar (and easy to prove) observation is that if $f_1, f_2 : X \to Y$ and $g_1, g_2 : W \to Z$ are such that $f_1 =_{p\as} f_2$ for $p : \Theta \to X$ and $g_1 =_{q\as} g_2$ for $q : \Phi \to W$, then also $(f_1 \otimes g_1) =_{p\otimes q} (f_2 \otimes g_2)$.

\begin{rem}
	\label{aseq_compose}
	Almost sure equality can obviously always be weakened by precomposition: if $f =_{p\as} g$, and if the codomain of $q$ matches the domain of $p$, then also $f =_{pq\as} g$. The causality axiom (\Cref{causal_defn}) provides conversely a way to decompose a relativizing morphism: marginalizing its conclusion over the middle output shows that if $f =_{pq\as} g$, whenever this makes sense, then also $fp =_{q\as} gp$. But regardless of whether the causality axiom holds or not, $f =_{p\as} g$ clearly implies $fp = gp$.
\end{rem}

We now consider conditionals in the sense of \Cref{defn_has_conds}, and note that they are almost surely unique~\cite[p.~22]{cho_jacobs}.

\begin{prop}
	\label{cond_unique}
	Given a morphism $\psi : I \to X \otimes Y$, the conditional $\psi_{|X} : X \to Y$ is unique $\psi(x)$-a.s.~(if it exists).
\end{prop}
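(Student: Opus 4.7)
The plan is to observe that the statement is essentially immediate from unfolding the two definitions, once we specialise \Cref{defn_has_conds} to $A = I$ and \Cref{defn_as} to $p := \psi(x) : I \to X$.

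First I would recall that, for $A = I$, the defining equation \eqref{cond_eq} for a conditional $\psi_{|X}$ of $\psi : I \to X \otimes Y$ simplifies to
\[
\psi \;=\; (\id_X \otimes \psi_{|X}) \circ \cop_X \circ \psi(x),
\]
where $\psi(x) := (\id_X \otimes \del_Y) \circ \psi$ is the marginal of $\psi$ on $X$. Suppose now that $\psi_{|X}$ and $\psi'_{|X}$ are both conditionals of $\psi$. Then each individually satisfies the above equation, so
\[
(\id_X \otimes \psi_{|X}) \circ \cop_X \circ \psi(x) \;=\; \psi \;=\; (\id_X \otimes \psi'_{|X}) \circ \cop_X \circ \psi(x).
\]

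Next I would match this equation against \Cref{defn_as}. With $\Theta := I$ and $p := \psi(x)$, the $p$-a.s.\ equality $\psi_{|X} =_{\psi(x)\as} \psi'_{|X}$ is by definition precisely the string-diagrammatic equation displayed above: copy $X$ after $\psi(x)$, and then apply either $\psi_{|X}$ or $\psi'_{|X}$ to the second copy while leaving the first untouched. So the two are $\psi(x)$-a.s.\ equal, which is the claim.

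I do not anticipate any obstacle; the proposition is really a direct translation between the definition of conditional and the definition of almost sure equality, and takes essentially one line. If desired, the same argument goes through for a conditional $f_{|X} : X \otimes A \to Y$ of a general morphism $f : A \to X \otimes Y$ in the sense of \Cref{defn_has_conds}, yielding the generalisation that any two such conditionals are $f(x|a)$-a.s.\ equal, where now one takes $p$ to be the morphism $A \to X \otimes A$ obtained from the $X$-marginal of $f$ together with copying the input $A$.
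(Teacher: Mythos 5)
Your proof is correct and takes exactly the paper's approach: the paper likewise observes that the defining equation \eqref{cond_dist} of a conditional is literally the string diagram asserting $\psi(x)$-a.s.\ equality, so any two conditionals are $\psi(x)$-a.s.\ equal by definition. Your closing remark about the generalisation to $f : A \to X \otimes Y$ is a correct bonus not present in the paper.
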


Here, $\psi(x)$ is the marginal morphism $I \to X$ of the original $\psi(x,y)$ in terms of \Cref{formal_notation}. The uniqueness statement means that any two such conditionals are $\psi(x)$-a.s.~equal.

\begin{proof}
	This is an immediate consequence of the defining equation
	\[
		\input{cond_state.tikz}
	\]
\end{proof}

A related point is that almost sure equality lets us define categories of probability spaces in $\C$. A \emph{probability space} in a Markov category $\C$ is a pair $(X,\psi)$ where $X \in \C$ is an object and $\psi : I \to X$ is a morphism playing the role of a distribution on $X$. If $(X,\psi)$ and $(Y,\phi)$ are probability spaces, then a morphism $f : X \to Y$ is \emph{measure-preserving} if $\phi = f\psi$. In the following definition, we impose the causality condition as an apparently necessary hypothesis for our well-definedness argument below.

\newcommand{\Probstoch}[1]{\mathsf{ProbStoch}(#1)}
\newcommand{\Prob}[1]{\mathsf{Prob}(#1)}

\begin{defn}
	Suppose that $\C$ is causal (\Cref{causal_defn}). Then the \emph{category of probability spaces and Markov kernels} $\Probstoch{\C}$ has as objects the probability spaces $(X,\psi)$ in $\C$, and as morphisms $(X,\psi) \to (Y,\phi)$ those maps $f : X \to Y$ which satisfy $f \psi = \phi$, modulo $\psi$-a.s.\ equality, with composition inherited from $\C$.
\end{defn}

In categories like $\FinStoch$ and $\Stoch$, the condition $f \psi = \phi$ says exactly that $f$ should be measure-preserving in the usual sense.

\begin{prop}
	\label{probstoch}
	Suppose that $\C$ is causal.
	\begin{enumerate}
		\item\label{probstoch_welldef} Composition in $\Probstoch{\C}$ is well-defined, and $\Probstoch{\C}$ inherits the symmetric monoidal structure from $\C$.
		\item\label{couplings_iso} If $\C$ has conditionals, then $\Probstoch{\C}$ is isomorphic to the category of couplings described in \Cref{coupling_cat}.
	\end{enumerate}
\end{prop}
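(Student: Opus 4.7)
The plan is to verify parts (a) and (b) in turn. For part \ref{probstoch_welldef}, the main task is well-definedness of composition on equivalence classes. Suppose $f =_{\psi\as} f' : (X,\psi) \to (Y,\phi)$ and $g =_{\phi\as} g' : (Y,\phi) \to (Z,\sigma)$. Lemma \ref{as_compose} immediately gives $g'f =_{\psi\as} g'f'$. Since $\phi = f\psi$, the second hypothesis reads $g =_{f\psi\as} g'$, and the causality-based implication from \Cref{aseq_compose}---the only place where the causality axiom is invoked---yields $gf =_{\psi\as} g'f$. Chaining these produces $gf =_{\psi\as} g'f'$. Associativity and unitality of the quotient composition are inherited directly from $\C$. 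The symmetric monoidal structure is defined on objects by $(X,\psi) \otimes (Y,\phi) := (X\otimes Y,\, \psi\otimes\phi)$; that the tensor product of morphisms respects a.s.~equality follows by factoring $f\otimes g = (f \otimes \id)(\id \otimes g)$ and applying Lemma \ref{as_compose} twice, after observing that the marginals of $\psi\otimes\phi$ are $\psi$ and $\phi$. The structural isomorphisms of $\C$ descend to $\Probstoch{\C}$ because they are deterministic and measure-preserving by naturality.

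For part \ref{couplings_iso}, assuming now that $\C$ has conditionals, I construct a functor $F$ from $\Probstoch{\C}$ to the category of couplings of \Cref{coupling_cat} which is the identity on objects and sends a class $[f] : (X,\psi) \to (Y,\phi)$ to
\[
	F(f) \: := \: (\id_X \otimes f) \circ \cop_X \circ \psi \: : \: I \longrightarrow X \otimes Y.
\]
This has marginals $\psi$ on $X$ and $f\psi = \phi$ on $Y$, and the defining equation of $\psi$-a.s.~equality is exactly the statement that $F$ is well-defined on equivalence classes. The inverse sends a coupling $\zeta$ to the class $[\zeta_{|X}]$ of any of its conditionals on $X$, which is well-defined by the $\psi$-a.s.~uniqueness of conditionals (\Cref{cond_unique}); marginalizing the defining equation of $\zeta_{|X}$ over $X$ gives $\zeta_{|X}\psi = \phi$, so this lands in the correct hom-set. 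The two assignments are mutually inverse: $f$ itself is evidently a conditional $F(f)_{|X}$ by direct inspection, and the defining equation of $\zeta_{|X}$ is precisely the statement $\zeta = F(\zeta_{|X})$.

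Functoriality of $F$ reduces to preservation of composition, since $F(\id_X) = \cop_X\psi$ is the identity at $(X,\psi)$ in the coupling category. Given $[f] : (X,\psi) \to (Y,\phi)$ and $[g] : (Y,\phi) \to (Z,\sigma)$, I need to show that $F(f)\otimes_Y F(g)$ marginalized over $Y$ equals $F(gf)$. Since $g$ is a conditional $F(g)_{|Y}$ by inspection, the alternative representation of the conditional product from \cref{condprob_alt} gives
\[
	F(f)\otimes_Y F(g) \: = \: (\id_X \otimes \id_Y \otimes g)\circ(\id_X \otimes \cop_Y)\circ F(f);
\]
marginalizing out the middle $Y$ and collapsing $(\del_Y \otimes g)\circ\cop_Y$ to $g$ via the counit-copy axiom leaves $(\id_X \otimes gf)\circ\cop_X\circ\psi = F(gf)$, as required.

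The main obstacle is in part \ref{probstoch_welldef}: without causality there seems to be no way in general to transport the a.s.~equality $g =_{\phi\as} g'$ (which lives relative to $\phi = f\psi$) back to an a.s.~equality of $gf$ and $g'f$ relative to $\psi$ alone. Once this step is secured, the bulk of part \ref{couplings_iso} is bookkeeping that is streamlined by the flexibility in the choice of conditionals afforded by \cref{condprob_alt}.
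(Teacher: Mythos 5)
Your proof is correct and takes essentially the same route as the paper: causality enters exactly once, via the implication $g =_{f\psi\as} g' \Rightarrow gf =_{\psi\as} g'f$ (the content of \Cref{aseq_compose}, which is precisely what the paper's string-diagram computation establishes), and the isomorphism with the coupling category uses the same assignment $f \mapsto (\id_X \otimes f)\circ\cop_X\circ\psi$ together with the alternative formula \cref{condprob_alt} for the conditional product, with your write-up merely spelling out details the paper leaves as ``easy to see.'' One minor slip: well-definedness of the tensor of morphisms is not literally two applications of \Cref{as_compose} (which only covers postcomposition), but rather the easy observation recorded after \Cref{lem_asequal_general} that almost sure equalities can be tensored; this does not affect correctness.
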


For \ref{couplings_iso}, recall that the existence of conditionals implies causality (\Cref{cond_causal}), so that only the existence of conditionals needs to be verified when applying this to a given Markov category $\C$. We do not know whether the well-definedness of composition would still hold without causality.

\begin{proof}
	\begin{enumerate}
		\item Concerning the well-definedness, let $f,f' : X \to Y$ and $g : Y \to Z$ be representative morphisms between probability spaces $(X,\psi)$, $(Y,\phi)$ and $(Z,\pi)$, and suppose that $f =_{\psi\as} f'$. Then we get $gf =_{\psi\as} gf'$ by \Cref{as_compose}. Well-definedness in $g$ follows from the causality property of \Cref{causal_defn}: the assumption $g =_{\phi\as} g'$ means that
		\[
			\input{compose_welldefined1.tikz}
		\]
		and therefore applying causality and marginalizing over the middle output results in the desired equation
		\[
			\input{compose_welldefined2.tikz}
		\]
		Hence composition is well-defined.

		The monoidal structure on $\Probstoch{\C}$ is given by
		\[
			(X,\psi)\otimes (Y,\phi) := (X\otimes Y, I \stackrel{\cong}{\to} I \otimes I \stackrel{\psi \otimes \phi}{\longrightarrow} X \otimes Y),
		\]
		and similarly on morphisms. With the structure isomorphisms also inherited from $\C$ in the obvious way, it is straightforward to check that we indeed get a symmetric monoidal category.

	\item We first establish the bijection between the hom-sets: to a given morphism $f : (X,\psi) \to (Y,\phi)$ represented by $f \in \C(X,Y)$, we assign the joint $I \to X \otimes Y$ given by~\cite[Section~7.1]{golubtsov2}
		\[
			\input{coupling.tikz}
		\]
	Clearly $f$ itself is a conditional of this distribution with respect to the first output, and this assignment establishes a bijection between $\psi$-a.s.\ equivalence classes of morphisms $f : X \to Y$ satisfying $\phi = f\psi$ and couplings, i.e.~morphisms $I \to X \otimes Y$ with marginals $\psi$ and $\phi$. Moreover, together with the formula~\cref{condprob_alt} for the conditional product, it is easy to see that this bijection respects composition. \qedhere
	\end{enumerate}
\end{proof}

For example, \ref{probstoch_welldef} allows us to construct $\Probstoch{\Stoch}$, the category of all probability spaces and measure-preserving maps modulo almost sure equality\footnote{Assuming that almost sure equality is defined appropriately, namely as in~\Cref{stoch_as}.}, since we already know $\Stoch$ to be causal (\Cref{stoch_causal}).

\begin{rem}
	\label{dagger}
	If $\C$ has conditionals, we now sketch how Bayesian inversion equips the category $\Probstoch{\C}$ with a symmetric monoidal dagger functor in the sense of dagger categories\footnote{This means that $-^\dag : \C^\op \to \C$ is identity-on-objects, satisfies $(gf)^\dag = f^\dag g^\dag$ and $f^{\dag\dag} = f$ and $\id^\dag = \id$ and $(g\otimes f)^\dag = g^\dag \otimes f^\dag$, and is such that the dagger of any symmetric monoidal structure isomorphism is its inverse. The latter holds in our case because the dagger of any deterministic isomorphism is its inverse, which applies to the structure isomorphisms by \Cref{structure_det}. All of these properties are straightforward to check.}. Since this applies in particular to $\BorelStoch$ by \Cref{borelstoch_cond}, this generalizes a recent result due to Clerc, Danos, Dahlqvist and Garnier, who proved this in explicit measure-theoretic terms for $\BorelStoch$~\cite[Theorem~2.10]{DSDG}. Concretely, if $f : (X,\psi) \to (Y,\phi)$ is a morphism represented by $f : X \to Y$ with $f\psi = \phi$, then we get a morphism $f^\dag : (Y,\phi) \to (X,\psi)$ represented by any $f^\dag : Y \to X$ satisfying the equation
	\beq
		\label{bayes_inv}
		\input{bayesian_inversion.tikz}
	\eeq
	This equation expresses the property that $f^{\dag}$ should be a conditional on $Y$ for the joint $I \to X \otimes Y$ depicted in~\eqref{bayes_inv}. In terms of the characterization as the category of couplings stated in~\Cref{probstoch}\ref{couplings_iso}, this dagger functor is a mere bookkeeping device, swapping couplings $I \to X \otimes Y$ to couplings $I \to Y \otimes X$ by composition with $\swap_{X,Y} : X \otimes Y \to Y \otimes X$. This characterization can be used to give a simple proof of the fact that the defining properties of a symmetric monoidal dagger functor are satisfied.

	In conclusion, for any Markov category $\C$ which has conditionals, Bayesian inversion is implemented by a symmetric monoidal dagger functor on $\Probstoch{\C}$.
\end{rem}

Returning to the theory of almost surely, we believe that other concepts of probability and statistics can and should be relativized similarly with respect to almost surely in a similar way as \Cref{defn_as} does for equality, by starting with the original definition and composing it with $p$ followed by $\cop_X : X \to X \otimes X$. For concreteness and future use, we show what this amounts to in the case of \Cref{defn_det}.

\begin{defn}
	\label{defn_as_det}
	Given $p : \Theta \to X$, we say that $f : X \to Y$ is \emph{$p$-a.s.~deterministic} if
	\[
		\input{asdeterministic.tikz}
	\]
\end{defn}

In other words, we now postulate the determinism equation only with respect to $p$-a.s.~equality. We typically expect such relativizations to respect a.s.~equality, which is indeed what happens in this case:

\begin{lem}
	\label{lem_asequal_asdeterministic}
	Let $p : \Theta \to X$. If $f,g : X \to Y$ are such that $g =_{p\as} f$ and $f$ is $p$-a.s.~deterministic, then so is $g$.
\end{lem}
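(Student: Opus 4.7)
The plan is to establish the $p$-a.s.\ determinism of $g$, namely $\cop_Y g =_{p\as} (g\otimes g)\cop_X$, by chaining three $p$-almost sure equalities:
\begin{equation*}
\cop_Y g \;=_{p\as}\; \cop_Y f \;=_{p\as}\; (f\otimes f)\cop_X \;=_{p\as}\; (g\otimes g)\cop_X.
\end{equation*}
Transitivity of $=_{p\as}$ is immediate from its definition, since it is just equality of two specific composite morphisms. The first link then comes from $g =_{p\as} f$ combined with \Cref{as_compose} applied to postcomposition with $\cop_Y$. The middle link is precisely the hypothesis that $f$ is $p$-a.s.\ deterministic.

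For the third link, the plan is to unfold the definition of $p$-a.s.\ equality for morphisms $X \to Y \otimes Y$, and use coassociativity of $\cop_X$ to rewrite the claim as the equality
\begin{equation*}
(f \otimes f \otimes \id_X)\,\Delta_3\,p \;=\; (g \otimes g \otimes \id_X)\,\Delta_3\,p,
\end{equation*}
where $\Delta_3 := (\cop_X \otimes \id_X)\cop_X : X \to X \otimes X \otimes X$ is the threefold copy. This then follows from \Cref{lem_asequal_general} applied with $f_1 := f$, $f_2 := g$, $g_1 := f$, $g_2 := g$, both hypotheses of which collapse to the given $f =_{p\as} g$. The only subtlety is that \Cref{lem_asequal_general} places the inert $\id_X$ wire between the two image wires rather than on the outside; this discrepancy is reconciled by one appeal to cocommutativity and coassociativity of $\cop_X$ (or equivalently by postcomposition with an appropriate $\swap$), which leaves both sides unchanged.

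The only real work is the bookkeeping of wire positions in this last step, so I do not anticipate any serious obstacle—the whole argument is a short combination of \Cref{as_compose}, \Cref{lem_asequal_general}, and the comonoid axioms.
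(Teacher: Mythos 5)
Your proposal is correct and follows essentially the same route as the paper: the paper's proof is exactly the three-step chain $(\cop_Y\, g\otimes\id)\cop_X p = (\cop_Y\, f\otimes\id)\cop_X p = ((f\otimes f)\cop_X\otimes\id)\cop_X p = ((g\otimes g)\cop_X\otimes\id)\cop_X p$, using $g=_{p\as}f$ for the first step, the $p$-a.s.\ determinism of $f$ for the second, and \Cref{lem_asequal_general} for the third. Your handling of the wire-position discrepancy via cocommutativity and coassociativity of $\cop_X$ is the right (and only) bookkeeping needed.
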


\begin{proof}
	We use the assumptions to compute
	\[
		\input{asequal_asdeterministic.tikz}
	\]
	where the last step is by \Cref{lem_asequal_general}.
\end{proof}

\begin{ex}
	\label{stoch_asdet}
	Generalizing \Cref{stoch_det}, in $\Stoch$ a morphism $f$ is $p$-a.s.~deterministic if for all $\theta \in \Theta$ we have
	\[
		\int_{x \in R} f(S|x) \, f(T|x) \, p(dx|\theta) = \int_{x \in R} f(S \cap T) \, p(dx|\theta)
	\]
	for all $R \in \Sigma_X$ and $S,T \in \Sigma_Y$. The universal quantification over $R$ implies that the integrands $x \mapsto f(S|x) f(T|x)$ and $x \mapsto f(S \cap T|x)$ are $p(-|\theta)$-almost surely equal for all $\theta$. By reasoning analogous to \Cref{stoch_det}, we thereby obtain that $f$ is $p$-a.s.~deterministic if and only if for every $S \in \Sigma_X$ we have $f(S|x) \in \{0,1\}$ at least $p(-|\theta)$-almost surely for every $\theta$.
\end{ex}

\begin{rem}
	\label{prob_cat}
	Assuming that $\C$ is causal, we can restrict the category $\Probstoch{\C}$ considered in \Cref{probstoch} to the almost surely deterministic morphisms, resulting in a symmetric monoidal subcategory $\Prob{\C} \subseteq \Probstoch{\C}$ which we call the \emph{category of probability spaces in $\C$}. More precisely, a morphism $(X,\psi) \to (Y,\phi)$ represented by $f \in \C(X,Y)$ is in $\Probstoch{\C}$ if and only if $f$ is $\psi$-a.s.\ deterministic. \Cref{lem_asequal_asdeterministic} shows that this property does not depend on the choice of representative. The fact that $\Prob{\C}$ is closed under composition again follows from the causality property: the assumption that $g$ is $f\psi$-a.s.\ deterministic gives
	\[
		\input{prob_proof1.tikz}
	\]	
	thanks to causality, and the assumption that $f$ is $\psi$-a.s.\ deterministic allows us to write
	\[
		\input{prob_proof2.tikz}
	\]
	so that both steps together prove the claim.

	Finally, it is easy to see that $\Prob{\C}$ is indeed closed under monoidal product and contains the structure isomorphisms by \Cref{structure_det}, so that it indeed forms a symmetric monoidal subcategory of $\Probstoch{\C}$ if $\C$ is causal.
\end{rem}

We now consider some relations between almost sure determinism and the positivity axiom discussed in \Cref{positive_defn} and after. This following observation is in the spirit of \Cref{deterministic_composition}.

\begin{prop}
	\label{retract_asdet}
	Suppose that $f : X \to Y$ and $g : Y \to Z$ in $\C$ are such that $gf$ is deterministic. Suppose also that
	\begin{enumerate}
		\item\label{detass} $f$ is deterministic, or 
		\item\label{posass} $\C$ is positive (\Cref{positive_defn}).
	\end{enumerate}
	Then $g$ is $f$-a.s.~deterministic.
\end{prop}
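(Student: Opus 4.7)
My plan is to first unpack the definition of ``$f$-a.s.~deterministic'' and then handle each case by a short string-diagrammatic calculation in which both sides collapse to a common ``fully copied'' expression.

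Specializing \Cref{defn_as_det} with reference morphism $p := f$, the statement ``$g$ is $f$-a.s.~deterministic'' translates into the equality of two morphisms $X \to Z \otimes Z \otimes Y$,
\[
\alpha \;:=\; (\cop_Z\, g \otimes \id_Y)\,\cop_Y\, f, \qquad \beta \;:=\; ((g\otimes g)\cop_Y \otimes \id_Y)\,\cop_Y\, f,
\]
where the rightmost $Y$-wire is the tracking copy of the reference $f$. I will show that in each of the two cases both $\alpha$ and $\beta$ reduce to
\[
\gamma \;:=\; ((gf \otimes gf)\cop_X \otimes f)\,\cop_X.
\]
The key input used throughout is that determinism of $gf$ together with \Cref{comult_natural} gives $\cop_Z(gf) = (gf\otimes gf)\cop_X$.

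For case (a), I will push $f$ through the first copy using the determinism equation $\cop_Y f = (f\otimes f)\cop_X$. This turns $\alpha$ into $(\cop_Z gf \otimes f)\cop_X$, which equals $\gamma$ by determinism of $gf$; and it turns $\beta$ into $((g\otimes g)(f\otimes f)\cop_X \otimes f)\cop_X = ((gf\otimes gf)\cop_X \otimes f)\cop_X = \gamma$, giving $\alpha=\beta$.

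For case (b), I will invoke the positivity axiom \eqref{positive} twice, first with the ``top'' morphism $\cop_Z g : Y \to Z\otimes Z$ and then with $(g\otimes g)\cop_Y : Y \to Z\otimes Z$, with $f$ always playing the role of the ``bottom'' morphism. The first application requires $\cop_Z g \circ f = \cop_Z (gf)$ to be deterministic, which follows from \Cref{structure_det} and \Cref{deterministic_composition}\ref{det_comp}; positivity then directly yields $\alpha = (\cop_Z(gf) \otimes f)\cop_X = \gamma$. The second application needs the preliminary fact that $(g\otimes g)\cop_Y \circ f$ is deterministic, which I will bootstrap from a first use of ordinary positivity applied to $f$ and $g$: this gives $(g\otimes \id_Y)\cop_Y f = (gf\otimes f)\cop_X$, and post-composing with $\id_Z \otimes g$ yields $(g\otimes g)\cop_Y f = (gf \otimes gf)\cop_X = \cop_Z(gf)$, which is indeed deterministic. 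Positivity then gives $\beta = ((g\otimes g)\cop_Y f \otimes f)\cop_X = ((gf\otimes gf)\cop_X \otimes f)\cop_X = \gamma$, as required. The only mildly subtle step is this preparatory bootstrapping to license the second use of positivity; everything else is routine formal manipulation.
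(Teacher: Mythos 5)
Your proof is correct and follows essentially the same route as the paper's: both sides of the a.s.-determinism equation are reduced to the fully copied form $((gf\otimes gf)\,\cop_X\otimes f)\,\cop_X$, using the determinism of $f$ in case (a) and repeated applications of the positivity equation together with determinism of $gf$ and coassociativity in case (b). The only difference is organizational — the paper writes a single chain from $\alpha$ to $\beta$ passing through this middle form (folding the copies back via positivity applied to $g$ alone), whereas you meet in the middle and license the application of positivity to $(g\otimes g)\,\cop_Y$ by an explicit bootstrap; both are valid.
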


In particular, if $gf = \id$ and $\C$ is positive, then $g$ is $f$-a.s.~deterministic. \Cref{finstoch_pm} shows that $gf = \id$ alone is not enough to imply that $g$ is $f$-a.s.~deterministic.

\begin{proof}
	\begin{enumerate}
		\item This is a straightforward string diagram computation,
			\[
				\input{asdeterministic_proof3.tikz}
			\]
		\item We use the positivity equation of \Cref{cond_positive} repeatedly together with associativity,
			\[
				\input{asdeterministic_proof1.tikz}
			\]
			\[
				\input{asdeterministic_proof2.tikz}
			\]
	\end{enumerate}
\end{proof}

In the proof of \Cref{thm_FN}, we will need a stronger positivity condition than \Cref{positive_defn} obtained by relativizing almost surely.

\begin{defn}
	\label{spositive_defn}
	We say that $\C$ is \emph{strictly positive} if whenever morphisms
	\[
		\begin{tikzcd}
			\Theta \ar["p"]{r} & X \ar["f"]{r} & Y \ar["g"]{r} & Z
		\end{tikzcd}
	\]
	are such that $gf$ is $p$-a.s.~deterministic, then also
	\beq
		\label{spositive}
		\input{spositive.tikz}
	\eeq
\end{defn}

\begin{rem}
	\label{gfp_det}
	Carrying an extra $p : \Theta \to X$ around in the proof of \Cref{retract_asdet} shows the following more general statement. Suppose that $gf$ is $p$-a.s.~deterministic, and that also
	\begin{enumerate}
		\item $f$ is $p$-a.s.~deterministic, or
		\item $\C$ is strictly positive.
	\end{enumerate}
	Then $g$ is $fp$-a.s.~deterministic.
\end{rem}

As a generalization of \Cref{cond_positive}, we have:

\begin{lem}
	If $\C$ has conditionals, then $\C$ is strictly positive.
\end{lem}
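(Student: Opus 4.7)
The plan is to adapt the proof of \Cref{cond_positive} by carrying the relativizing morphism $p$ along throughout the string diagram computation, using disintegration to convert the ``input'' $\Theta$ into a ``state'' that can be manipulated in the same way as the $X$-wire was in the original proof.

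First, I would apply \Cref{disint} to the pair $(p,f)$: this yields $s : \Theta \otimes Y \to X$ such that
\[
(\id_X \otimes f) \circ \cop_X \circ p \;=\; (s \otimes \id_Y) \circ (\id_\Theta \otimes fp) \circ \cop_\Theta,
\]
as morphisms $\Theta \to X \otimes Y$. Pictorially, this says that the joint $\theta \mapsto (x, f(x))$ with $x \sim p(\theta)$ can be rewritten by first producing $y \sim fp(\theta)$ and then drawing $x$ from $s(\theta, y)$.

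Next, I would run the three-step chain from the proof of \Cref{cond_positive}, but postcomposing every morphism with $p$ (equivalently, using $p$ in place of the input wire $X$ throughout, and applying disintegration to absorb $p$ appropriately into $s$). In the original argument, the single essential appeal to determinism was the replacement of $\cop_Z \circ gf$ by $(gf \otimes gf) \circ \cop_X$ in the middle step. Here the corresponding move is
\[
\cop_Z \circ gf \circ p \;=_{p\as}\; (gf \otimes gf) \circ \cop_X \circ p,
\]
which is exactly the assumption that $gf$ is $p$-a.s.\ deterministic (\Cref{defn_as_det}). After this step, I would apply the disintegration equation a second time in the reverse direction to produce two independent copies of $p$ fed into two copies of $f$ (and one $g$), which is the form of the RHS of \eqref{spositive}. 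Marginalizing/reindexing appropriately using the symmetry and commutativity of $\cop_\Theta$ then gives the desired equation $p$-a.s.

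The main technical obstacle is bookkeeping: the relation $=_{p\as}$ is only directly useable inside a specific diagrammatic context (composition beneath $\cop_\Theta \circ p$), whereas the chain of manipulations we want to perform is at the level of morphisms $\Theta \to Z \otimes Y$ with $\Theta$ as an open input. The way around this is to observe that every step of the adapted computation already has $p$ sitting underneath the wires where determinism is invoked, so the single ``determinism'' step is actually an instance of $=_{p\as}$; the remaining steps are honest equalities, and can be chained with the $p$-a.s.\ equality via \Cref{as_compose} and \Cref{lem_asequal_general}. Thus the conclusion follows from strictly the same diagrammatic identities as in \Cref{cond_positive}, reinterpreted after composition with $p$.
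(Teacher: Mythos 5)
Your high-level strategy is exactly the paper's: rerun the computation from the proof of \Cref{cond_positive} with $p$ (and the extra $X$-wire demanded by the definition of $=_{p\as}$) attached at the bottom, note that all steps but one are honest equalities and therefore survive precomposition, and that the single remaining step is precisely an instance of the $p$-a.s.\ determinism of $gf$, which may be invoked in the presence of extra copies of the input thanks to \Cref{lem_asequal_general}. The gap is in the concrete disintegration you choose. You disintegrate the pair $(p,f)$, obtaining $s : \Theta \otimes Y \to X$ that recovers $x$ from $(\theta,y)$. This $s$ only rewrites the joint of $(y,x)$ given $\theta$ and never brings the composite $gf$ into the picture, whereas the hypothesis you must exploit is a statement about duplicating a sample $z$ of $gf$. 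Concretely, in the formal notation of \Cref{formal_notation}, after your substitution the goal reduces to
\[
g(z|y)\, s(x|\theta,y)\, (fp)(y|\theta) \;=\; (gf)(z|x)\, s(x|\theta,y)\, (fp)(y|\theta),
\]
and there is no visible way to feed the determinism hypothesis into this identity. Relatedly, your closing step (``produce two independent copies of $p$ fed into two copies of $f$'') does not match the shape of the right-hand side of \eqref{spositive}, which contains a single $p$ and two copies of $f$ applied to copies of the \emph{same} $x$. As a minor point, your displayed disintegration identity also does not typecheck: the right-hand side needs a $\cop_Y$ so that $s$ receives a copy of $y$ while the other copy is output.

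The disintegration that makes the argument work is the one already used in the proof of \Cref{cond_positive}, namely of $f$ over $gf$: an $s : X \otimes Z \to Y$ with $g(z|y)\,f(y|x) = s(y|x,z)\,(gf)(z|x)$ as in \eqref{positive_cond_eq}, which involves $p$ not at all. With this $s$, the left-hand side of \eqref{spositive} precomposed with $\cop_X \circ p$ becomes $s(y|x,z)\,(gf)(z|x)\,p(x|\theta)$; the $p$-a.s.\ determinism of $gf$ duplicates the $z$ feeding $s$; and the marginalized disintegration identity $\sum_z s(y|x,z)\,(gf)(z|x) = f(y|x)$ --- an honest equality --- reconstitutes $f$ and yields the right-hand side. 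So the fix is small: keep your bookkeeping, but replace your choice of $s$ by the paper's.
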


\begin{proof}
	Use the same $s$ as in~\eqref{positive_cond_eq}, and then do the analogous computation as in the proof of \Cref{cond_positive} while carrying the additional $p$ along.
\end{proof}

In particular, $\FinStoch$ and $\Gauss$ are strictly positive. For $\Stoch$, we need a separate argument generalizing \Cref{stoch_positive}.

\begin{ex}
	\label{stoch_spositive}
	$\Stoch$ is strictly positive, as we show now. By the characterization of $p$-a.s.~deterministic morphisms of \Cref{stoch_asdet}, the assumption is that
	\beq
		\label{stoch_spositive_ass}
		\int_{y\in Y} g(S|y) \, f(dy|x)  \: \in \{0,1\}
	\eeq
	for all $S \in \Sigma_X$ and $p(-|\theta)$-almost all $x \in X$ and all $\theta \in \Theta$. What we want to prove is that the equation
	\[
		\int_{y \in T} g(S|y) \, f(dy|x) = f(T|y) \, \int_{y \in Y} g(S|y) \, f(dy|x)
	\]
	holds for all $S \in \Sigma_X$ and $T \in \Sigma_Y$ for $p(-|\theta)$-almost all $x \in X$ and all $\theta$. This is by the same argument as in \Cref{stoch_positive}.
\end{ex}

As a final note, let us propose one more definition which can be nicely formulated in terms of almost sure equality, although we will not study it any further in this paper.

\newcommand{\supp}[1]{\mathrm{supp}(#1)}

\begin{defn}
	\label{defn_support}
	Let $\C$ be a Markov category. For any morphism $p : A \to X$, the \emph{support} $\supp{p}$ is any object which represents the functor
	\[
		\C(X,-)/=_{p\as} \; : \; \C \longrightarrow \Set
	\]
	mapping every object $Y$ to the set of equivalence classes of morphisms $X \to Y$ modulo $p\as$ equality.
\end{defn}

More concretely, this means that $\supp{p}$ comes equipped with a \emph{universal morphism} $i : \supp{p} \to X$ having the property that for every $Y \in \C$, composition with $i$ induces a bijection between morphisms $\supp{p} \to Y$ and $p$-a.s.~equivalence classes of morphisms $X \to Y$.

\begin{rem}
	\label{supp_usual}
	While this is somewhat speculative at the present stage, our hope is that this notion of support will specialize to the usual notion of support of a Radon probability measure on a locally compact space, or more generally that of a $\tau$-smooth probability measure on any topological space, when applied to morphisms $I \to X$ in a suitable Markov category with topological spaces as objects. So far we have not considered such a category yet.
\end{rem}

As is usually the case for an object specified in terms of a universal property, the support $\supp{p}$ may or may not exist for a given morphism $p$ in a given category $\C$.

\begin{qstn}
	Suppose that $\supp{p}$ exists. Then does $p : A \to X$ necessarily factor through $i : \supp{p} \to X$? Is $i$ necessarily deterministic?
\end{qstn}

As easily follows from the following example, the answer to both questions is positive in the case of $\FinStoch$.

\begin{ex}
	\label{finstoch_supp}
	In $\FinStoch$, the support of any morphism $p$ exists, and is exactly the support in the usual sense,
	\[
		\supp{p} = \{ x \in X \mid \exists a : p(x|a) > 0 \},
	\]
	where the universal morphism $i : \supp{p} \to X$ is the inclusion map. The universality is easy to prove as a consequence of \Cref{finstoch_as}.
\end{ex}

Finally, we remark that the existence of supports can be useful for simplifying arguments which involve relativization with respect to almost surely.

\begin{prop}
	If every morphism in $\C$ has a support and $\C$ is positive (\Cref{positive_defn}), then $\C$ is also strictly positive (\Cref{spositive_defn}).
\end{prop}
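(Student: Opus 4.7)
The plan is to exploit the universal morphism $i : \supp{p} \to X$ so as to transfer the desired $=_{p\as}$ equation into a strict equation on $\supp{p}$, where the ordinary positivity axiom can be brought to bear. Recall that the defining property of $\supp{p}$ is that for any $Y$ the assignment $[f] \mapsto f \circ i$ is a bijection $\C(X, Y)/{=_{p\as}} \cong \C(\supp{p}, Y)$; in particular two parallel morphisms out of $X$ are $p$-a.s.~equal if and only if they agree after precomposition with $i$, and every $h : \supp{p} \to Y$ admits an extension $f : X \to Y$ with $f \circ i = h$.

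The most delicate step is to establish that $i$ itself is deterministic. Surjectivity of the universal map applied with $Y = \supp{p}$ and $h = \id_{\supp{p}}$ produces $r : X \to \supp{p}$ with $r i = \id_{\supp{p}}$; since $ri$ is deterministic and $\C$ is positive, \Cref{retract_asdet}\ref{posass} gives that $r$ is $i$-a.s.~deterministic. Marginalizing the trailing $X$-channel in the defining equation of $i$-a.s.~determinism of $r$ yields the strict equality
\[
(r \otimes r)\, \cop_X\, i = \cop_{\supp{p}},
\]
and postcomposing with $i \otimes i$ gives $((ir) \otimes (ir))\,\cop_X\, i = (i \otimes i)\, \cop_{\supp{p}}$. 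On the other hand, from $i r i = i\cdot \id_{\supp{p}} = \id_X \cdot i$ the universal property yields $ir =_{p\as} \id_X$; then \Cref{lem_asequal_general} with both arguments equal to this almost sure equality gives $(ir \otimes ir)\,\cop_X =_{p\as} \cop_X$, and precomposing with $i$ converts this to the strict equation $((ir) \otimes (ir))\,\cop_X\, i = \cop_X\, i$. Combining the two chains delivers $\cop_X\, i = (i \otimes i)\,\cop_{\supp{p}}$, which is exactly the assertion that $i$ is deterministic.

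Once $i$ is known to be deterministic, the remainder is a direct transfer. The hypothesis that $gf$ is $p$-a.s.~deterministic, unpacked via the universal property and combined with the identity $\cop_X\, i = (i \otimes i)\,\cop_{\supp{p}}$, becomes the ordinary determinism equation $((gfi) \otimes (gfi))\,\cop_{\supp{p}} = \cop_Z\, (gfi)$ for $gfi : \supp{p} \to Z$. Applying the positivity axiom (\Cref{positive_defn}) to the composable pair $\supp{p} \xrightarrow{fi} Y \xrightarrow{g} Z$ then gives
\[
(g \otimes \id_Y)\,\cop_Y\, (fi) = ((gfi) \otimes (fi))\,\cop_{\supp{p}},
\]
and invoking $\cop_X\, i = (i \otimes i)\,\cop_{\supp{p}}$ once more rewrites the right-hand side as $(gf \otimes f)\,\cop_X\, i$. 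Hence $(g \otimes \id_Y)\,\cop_Y\, f \circ i = (gf \otimes f)\,\cop_X \circ i$, and applying the universal property to this pair of parallel morphisms $X \to Z \otimes Y$ yields the $=_{p\as}$ version that is the conclusion of \eqref{spositive}.

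The main obstacle, and the only genuinely non-routine part of the argument, is the proof that $i$ is deterministic; this is where the interplay between positivity (to make the retraction $r$ almost-surely respect copying) and the universal property (to upgrade $p$-a.s.~equalities on $X$ to strict equalities on $\supp{p}$) comes together. Everything that happens after this point is a mechanical translation between equations on $X$ and equations on $\supp{p}$ via the identity $\cop_X \, i = (i \otimes i)\,\cop_{\supp{p}}$.
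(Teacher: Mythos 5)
Your proof is correct and complete. The paper itself dismisses this proposition with the single word ``straightforward'' and records no argument, so there is nothing to compare against line by line; but your route is sound, and every step checks out: the retraction $r$ with $ri = \id_{\supp{p}}$ exists by surjectivity of the universal bijection, \Cref{retract_asdet} applied to $ri = \id$ makes $r$ $i$-a.s.\ deterministic, marginalizing the trailing wire gives $(r\otimes r)\,\cop_X\, i = \cop_{\supp{p}}$, and combining this with $ir =_{p\as} \id_X$ (via \Cref{lem_asequal_general} and the well-definedness/injectivity of precomposition with $i$ on $p$-a.s.\ classes) yields $\cop_X\, i = (i\otimes i)\,\cop_{\supp{p}}$. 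The subsequent transfer of the $p$-a.s.\ determinism of $gf$ to genuine determinism of $gfi$, the application of \Cref{positive_defn} to the pair $(fi, g)$, and the translation back along $i$ are all exactly as routine as you say. You correctly identified that the determinism of $i$ is the one point where something must actually be proved: without it, $p$-a.s.\ determinism of $gf$ does not obviously become determinism of $gfi$. It is worth noting that your intermediate result --- positivity of $\C$ forces every universal morphism $i : \supp{p} \to X$ to be deterministic --- gives a partial affirmative answer (under the positivity hypothesis) to the question the paper raises immediately after \Cref{defn_support} and answers only for $\FinStoch$; that observation has independent value beyond this proposition.
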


\begin{proof}
	Straightforward.
\end{proof}

\section{Sufficient statistics and the Fisher--Neyman factorization theorem}
\label{suff}

We finally move on to the treatment of statistics within our formalism, comprising this section and the following two. Informally, a \emph{statistic} is a function of a random sample intended to capture the essential information contained in this sample. Perhaps the most common example is the \emph{sample mean}: tossing a coin $n \gg 1$ times results in a probability distribution over $2^n$ outcomes, or in our setting a morphism $I \to \{\mathrm{heads},\mathrm{tails}\}^n$. The sample mean is the map $\{\mathrm{heads},\mathrm{tails}\}^n \to [0,1]$ which maps a sequence of outcomes to the relative frequency of (say) heads, which is the essential information contained in the sequence, assuming that the individual throws are independent. This independence assumption is a \emph{statistical model}: a hypothesis about a distribution which comes in the form of a collection of distributions on the sample space $X$, in this case $\{\mathrm{heads},\mathrm{tails}\}^n$, parametrized by a parameter space $\Theta$, in this case the space of probability distributions describing a single toss, which we can identify with $[0,1]$. In other words, a statistical model is a \emph{Markov kernel} $\Theta \to X$~\cite{mccullagh}.

\begin{defn}
	\label{def_stat_model}
	Given a sample space $X\in\C$, a \emph{statistical model} with values in $X$ consists of another object $\Theta\in\C$ and a morphism $p : \Theta\to X$.
\end{defn}

Put yet differently, a statistical model for $X$ is an object in the slice category over $X$. This notion of statistical model is the simplest and most naive version, where one considers only a single fixed sample space $X$; for a more refined notion and detailed discussion, see~\cite{mccullagh} as well as the \emph{repetitive structures} of~\cite{lauritzen_book2}.\footnote{It seems plausible to us that McCullagh's and Lauritzen's refined definitions fit perfectly well into our formalism as well by working within a suitable categories of diagrams (\Cref{sec_diagrams}), but we have not yet worked out the details of this.}

While our definition of statistical model keeps track of an explicit parametrization of all distributions via a parameter space $\Theta$, one can alternatively omit this parametrization and consider categories of unparametrized statistical models, which are mere sets of probability distributions~\cite[Definition~2.20]{jhht}. We view this difference as relatively minor, since a set of probability distributions can always be taken to be parametrized by itself.

\begin{defn}
	\label{def_statistic}
	A \emph{statistic} for a statistical model $p : \Theta \to X$ is a deterministic morphism $s : X\to V$ to some other object $V$.
\end{defn}

This is essentially the traditional definition (modulo subtleties like those of \Cref{stoch_det}, depending on which category $\C$ we are working in).

By combining a statistic $t : X\to V$ with a statistical model $p : \Theta\to X$, we obtain a morphism $\Theta\to V\otimes X$ which describes the joint distribution over sample space and statistic as a function of the model parameter,
\[
	\input{statistic.tikz}
\]
By construction, this morphism displays the conditional independence $\condindmarkov{\Theta}{V}{X}$ (\Cref{def_cimarkov}).

An important property which a statistic may or may not have is its \emph{sufficiency}. Intuitively, a sufficient statistic for a statistical model is as informative than the full sample.

\begin{defn}
	\label{suff_defn}
	A statistic $s : X \to V$ for a statistical model $p : \Theta \to X$ is \emph{sufficient} if there is $\alpha : V \to X$ such that
	\beq
		\label{suffdef}
		\input{sufficient_statistic.tikz}
	\eeq
\end{defn}

Formulating sufficiency in terms of the existence of such a morphism is certainly not new and has been considered previously in more concrete ways, e.g.~in Dawid's work on statistical operations~\cite[Example~III(a)]{dawid_operations}. Also Golubtsov~\cite[Section~7]{golubtsov1} already seems to have envisioned the possibility of giving such an abstract categorical definition of sufficiency.

We also say that $\alpha$ is a \emph{sufficiency witness} for $s$ if the above condition holds; another way to phrase it is to say that the morphism on the left in~\cref{suffdef} displays the conditional independence $\condindmarkov{\Theta}{X}{V}$ in addition to $\condindmarkov{\Theta}{V}{X}$, which is a well-known way to formulate sufficiency. Also note the similarity of this condition with Bayesian inversion~\cref{bayes_inv}. Remarkably, the condition automatically implies that $\alpha$ \emph{almost surely splits} $s$ in the sense that $s \alpha =_{sp\as} \id_V$,
\beq
	\label{suffsplit}
	\input{sufficient_statistic_split.tikz}
\eeq
where the first equation is by sufficiency and the second one because $s$ is deterministic. So intuitively, sufficiency means that given a value of the statistic $s$, applying $\alpha$ plays the role of making an \emph{inference} over the corresponding outcomes in $X$ which give rise to that value of $s$, and in such a way that this distribution coincides with the conditional distribution over outcomes given any parameter value. While the morphism $\alpha$ does play a role similar to that of a conditional, our definition may also be of interest in categories which do not have conditionals, such as $\Stoch$ (\Cref{stoch_cond_dist}).

Sufficient statistics enjoy the following compositionality property, inspired by a similar compositionality result of Jost, L\^e, Luu and Tran in their more specific setting~\cite[Theorem~2.28(1)]{jhht}.

\begin{lem}
	Let $s : X \to V$ and $t : V \to W$ be statistics. If $s$ is sufficient for $p$ and $t$ is sufficient for $sp$, then also the composite $ts$ is sufficient for $p$.
\end{lem}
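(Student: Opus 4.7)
The plan is to take $\alpha\beta : W \to X$ as the candidate sufficiency witness for the composite statistic $ts : X \to W$, where $\alpha : V \to X$ is a sufficiency witness for $s$ with respect to $p$ and $\beta : W \to V$ is one for $t$ with respect to $sp$. With this choice, the required equation~\cref{suffdef} for $ts$ should follow from a direct two-step string-diagram computation, substituting the two given sufficiency equations into suitable subdiagrams.

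First, I would start from the left-hand side of the sufficiency equation for $ts$, namely $\cop_X \circ p$ followed by $\id_X \otimes (t \circ s)$ on the two resulting wires. Splitting $ts = t \circ s$ lets me isolate the subdiagram consisting of $\cop_X \circ p$ followed by $\id_X \otimes s$, which is precisely the left-hand side of the sufficiency equation for $s$. Replacing that subdiagram by its right-hand side $(\alpha \otimes \id_V) \circ \cop_V \circ sp$ and then performing the still-pending $t$ on the right wire yields the intermediate morphism $(\alpha \otimes t) \circ \cop_V \circ sp : \Theta \to X \otimes W$.

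Second, I would rewrite this intermediate morphism as $(\alpha \otimes \id_W) \circ (\id_V \otimes t) \circ \cop_V \circ sp$ and recognise that the rightmost three maps form exactly the left-hand side of the sufficiency equation for $t$ with respect to $sp$. Applying that equation replaces those three maps by $(\beta \otimes \id_W) \circ \cop_W \circ tsp$, so the whole expression becomes $(\alpha \otimes \id_W) \circ (\beta \otimes \id_W) \circ \cop_W \circ tsp = ((\alpha\beta) \otimes \id_W) \circ \cop_W \circ (ts)p$, which is precisely the right-hand side of~\cref{suffdef} for the statistic $ts$ with witness $\alpha\beta$.

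There is really no significant obstacle: both steps are direct substitutions of the two given sufficiency equations, using only the associativity of composition and the bifunctoriality of $\otimes$, and no further axioms on $\C$ are required. The only mild bookkeeping point is to confirm that $\alpha\beta$ is indeed a legitimate morphism $W \to X$ (which it is, as the composite of $\beta : W \to V$ with $\alpha : V \to X$), so that the conclusion is exactly an instance of \Cref{suff_defn}.
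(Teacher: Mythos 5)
Your proposal is correct and follows exactly the paper's proof: the paper also takes $\alpha\beta$ as the sufficiency witness and performs the same two-step substitution, first applying the sufficiency equation for $s$ to reach the intermediate morphism $(\alpha \otimes t) \circ \cop_V \circ sp$, and then applying the sufficiency equation for $t$ with respect to $sp$. Nothing further is needed.
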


\begin{proof}
	Suppose that $\alpha : V \to X$ is a sufficiency witness for $s$ with respect to $p$ and $\beta : W \to V$ is a sufficiency witness for $t$ with respect to the pushforward model $sp$. Then we show that $\alpha\beta$ is a sufficiency witness for $ts$ with respect to $p$,
	\[
		\input{compositional_sufficiency.tikz}
	\]
	using the respective assumed instances of~\cref{suffdef}.
\end{proof}

Here is an abstract characterization of sufficient statistics, which seems to be a close relative of or version of the Fisher--Neyman factorization theorem.

\begin{thm}
	\label{thm_FN}
	Suppose that $\C$ is strictly positive (\Cref{spositive_defn}). Then a statistic $s : X \to V$ is sufficient for a statistical model $p : \Theta \to X$ if and only if there is $\alpha : V \to X$ such that $\alpha s p = p$ and $s \alpha =_{sp\as} \id_V$.
\end{thm}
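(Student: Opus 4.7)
The plan is to handle the two directions separately. The forward direction is essentially already present in the discussion following \Cref{suff_defn}: if $\alpha$ is a sufficiency witness, then marginalizing the defining equation \cref{suffdef} over the $V$-output yields $\alpha s p = p$ (using terminality of $I$, so that $\del_V \circ s = \del_X$), while the chain of equalities \cref{suffsplit}---which uses only determinism of $s$ and the sufficiency equation---is precisely the defining equation of $s\alpha =_{sp\as} \id_V$.

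For the converse, suppose $\alpha s p = p$ and $s\alpha =_{sp\as} \id_V$. Since $\id_V$ is deterministic, \Cref{lem_asequal_asdeterministic} implies that $s\alpha$ is $sp$-a.s.\ deterministic. Applying the strict positivity of $\C$ (\Cref{spositive_defn}) to the chain $\Theta \xrightarrow{sp} V \xrightarrow{\alpha} X \xrightarrow{s} V$ then yields the $sp$-a.s.\ equality
\[
(s \otimes \id_X) \circ \cop_X \circ \alpha \;=_{sp\as}\; (s\alpha \otimes \alpha) \circ \cop_V.
\]
Composing both sides with $sp$ (marginalizing the trailing $V$ in the $sp$-a.s.\ condition as in \Cref{aseq_compose}) and using $\alpha \circ sp = p$, this gives the strict equality
\[
(s \otimes \id_X) \circ \cop_X \circ p \;=\; (s\alpha \otimes \alpha) \circ \cop_V \circ sp
\]
of morphisms $\Theta \to V \otimes X$.

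To finish, I would reorder the tensor factors via $\swap_{V,X}$, using commutativity of $\cop_X$ and $\cop_V$, to land in $X \otimes V$, and then invoke $s\alpha =_{sp\as} \id_V$ once more---in the form $(\id_V \otimes s\alpha) \circ \cop_V \circ sp = \cop_V \circ sp$, tensored with $\alpha$ on the first output---to rewrite the right-hand side as $(\alpha \otimes \id_V) \circ \cop_V \circ sp$. This yields exactly \cref{suffdef}, proving sufficiency with $\alpha$ as witness. The main subtlety is precisely the use of strict positivity rather than mere positivity: the bare equation $\alpha s p = p$ expresses equality only of morphisms $\Theta \to X$, and the parametric version of positivity provided by \Cref{spositive_defn} is exactly what is needed to promote this, together with the a.s.\ splitting, to an equality of joint morphisms $\Theta \to X \otimes V$.
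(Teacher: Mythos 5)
Your proposal is correct and follows essentially the same route as the paper: the forward direction is read off from the marginalization of \cref{suffdef} and from \cref{suffsplit}, and the converse applies strict positivity to the chain $\Theta \xrightarrow{sp} V \xrightarrow{\alpha} X \xrightarrow{s} V$ (after noting $s\alpha$ is $sp$-a.s.\ deterministic) together with $\alpha s p = p$ and the a.s.\ splitting to recover \cref{suffdef}. The only difference is bookkeeping — you apply strict positivity first and then compose with $sp$, whereas the paper first substitutes $p = \alpha s p$ into the left-hand side of \cref{suffdef} — but the ingredients and the key step are identical.
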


More precisely, $\alpha : V \to X$ is a sufficiency witness if and only if it satisfies these alternative conditions. Since we have already assembled all the relevant ingredients, the proof is now essentially trivial.

\begin{proof}
	The ``only if'' direction is trivial by marginalization of~\cref{suffdef} and by~\cref{suffsplit}. For the ``if'' direction, we show that $\alpha s p = p$ and $s \alpha =_{sp\as} \id$ together imply~\cref{suffdef}. Indeed,
	\[
		\input{fisher_neyman_proof.tikz}
	\]
	where both steps are by assumption, and the second one also uses the strict positivity property of~\eqref{spositive}.
\end{proof}

We now illustrate the relation to the Fisher--Neyman factorization theorem in the case of our basic running example.

\begin{ex}
	\label{finstoch_fn}
	Consider $\FinStoch$, where statistics are just functions between finite sets. Now consider the equation $\alpha s p = p$ from \Cref{thm_FN}, which becomes
	\[
		p(x|\theta) \: = \: \sum_{v,x'} \alpha(x|v) \, s(v|x') \, p(x'|\theta).
	\]
	The condition $s \alpha =_{sp\as} \id_V$ implies that the sum over $v$ has only one contributing term, namely $v = s(x)$, where $s$ now also denotes the underlying function of finite sets by abuse of notation. Therefore
	\[
		p(x|\theta) \: = \: \alpha(x|s(x)) \sum_{x' \: : \: s(x') = s(x)} p(x'|\theta).
	\]
	This is the Fisher--Neyman factorization into one term which only depends on $x$, and one term which depends on $\theta$ and where the dependence on $x$ is only through $s(x)$. Conversely, given a Fisher--Neyman factorization
	\[
		p(x|\theta) = g_\theta(s(x)) \, h(x),
	\]
	we construct $\alpha$ satisfying the required properties. We can assume without loss of generality that for every $v \in V$ there is $x \in s^{-1}(v)$ and $\theta \in \Theta$ with $p(x|\theta) > 0$, since on all other $v$ we can define $\alpha(-|v)$ arbitrarily. Then we get that the denominator in the first case of the expression
	\[
		\alpha(x|v) := \begin{cases} \left(\sum_{x' \in s^{-1}(v)} h(x')\right)^{-1} h(x) & \textrm{if } s(x) = v, \\ 0 & \textrm{otherwise} \end{cases}
	\]
	is nonzero, making it well-defined. Now $s \alpha =_{sp\as} \id$ holds by construction, and plugging in the above expressions gives
	\begin{align*}
		(\alpha s p)(x|\theta) & = \sum_{x' \in X} \alpha(x|s(x')) \, p(x'|\theta) = \sum_{x' \in s^{-1}(x)} \alpha(x|s(x)) \, p(x'|\theta) = p(x|\theta),
	\end{align*}
	as was to be shown.
\end{ex}

We leave the investigation of further instances of \Cref{thm_FN} to future work, and in particular the problem of determining whether our result also specializes to the standard Fisher--Neyman factorization theorem in its general measure-theoretic form~\cite[Corollary~1]{HS}.

\section{Complete morphisms, ancillary statistics, and Basu's theorem}
\label{sec_basu}

\emph{Basu's theorem}~\cite[Theorem~2]{basu} is a classical result on the independence of a \emph{complete} sufficient statistic from any ancillary statistic. Let us start by discussing the more mysterious notion of completeness, introduced by Lehmann and Scheff\'e in 1950~\cite[Section~3]{LS}. We first explain how it secretly is a concept that applies to general morphisms in every Markov category.

Recall from \Cref{aseq_compose} that if $g =_{f\as} h$ for $f : X \to Y$ and $g,h : Y \to Z$, then also $gf = hf$. Now $f$ is defined to be complete if the converse holds as well.

\begin{defn}
	\label{defn_complete}
	A morphism $f : X \to Y$ is \emph{complete} if $gf = hf$ implies $g =_{f\as} h$.
\end{defn}

This definition is in the spirit of Dawid's \cite[Definition~4.1]{dawid_operations}. Note that at our level of abstraction, we have no way to distinguish between \emph{complete} and \emph{bounded complete}. We will prove in \Cref{stoch_complete} that in $\Stoch$, our definition actually reproduces the latter.

\begin{rem}
	\label{complete_rem}
	The main difference between our definition and the standard one is terminological: while standardly, completeness is a property of a \emph{statistic} $s$ relative to a \emph{statistical model} $p$, we consider it rather as a property of the composite morphism $sp$, since it only depends on the latter. In other words, completeness is best viewed a property of Markov kernels, or more generally morphisms in a Markov category.
\end{rem}

\begin{ex}
	\label{finstoch_complete}
	In $\FinStoch$, a morphism $f : X \to Y$ is complete if and only if the affine hull of $\im{f}$ is a face of the probability simplex over $Y$. For example, an $f : I \to Y$ is complete if and only if it is a Dirac measure, or equivalently if and only if it is deterministic.

	To see that this characterization holds, it is most instructive to present a geometrical argument. We think of morphisms $g,h : Y \to Z$ as affine maps from the probability simplex over $Y$ to the probability simplex over $Z$. Now recall from \Cref{finstoch_supp} that $g =_{f\as} h$ if and only if the restrictions of $g$ and $h$ to $\supp{f}$ coincide, where $\supp{f} \subseteq Y$ is the set of all $y \in Y$ for which there is $x \in X$ with $f(y|x) > 0$. We can identify the support $\supp{f}$ with the probability simplex over $\supp{f}$, as included in the probability simplex over $Y$ as a face. Therefore $g =_{f\as} h$ if and only if $g$ and $h$ coincide on the smallest face containing $\im{f}$.

	Similarly, $gf = hf$ holds if and only if $g$ and $h$ coincide on $\im{f}$, or equivalently on the affine hull of $\im{f}$. We can now show that $gf = hf$ implies $g =_{f\as} h$ if and only if the affine hull of $\im{f}$ is a face. By the above, the ``if'' direction is clear; for the ``only if'' part, we prove the contrapositive: if the affine hull of $\im{f}$ is not a face, then there are $g$ and $h$ with $gf = hf$ but not $g =_{f\as} h$. We prove the equivalent statement that for any affine subset\footnote{An affine subset of the simplex is given by the intersection of the simplex with an affine subspace, or equivalently by suitably many hyperplanes.} of the probability simplex over $Y$, there is $Z$ together with morphisms $g,h : Y \to Z$ which are equal on the affine subset, but not anywhere outside of it. Since the probability simplex over $X$ is bounded, we can ignore the requirement that $g$ and $h$ must again land in a probability simplex, and instead find two such affine maps into some Euclidean space; composing with a suitable invertible contraction and translation will map them back into a finite-dimensional simplex. Now the solution is obvious: for example, take $g$ to be the identity map and $h$ the orthogonal projection onto the affine subspace (with respect to any choice of inner product).
\end{ex}

\begin{ex}
	\label{stoch_complete}
	Now consider completeness in $\Stoch$. Here, we claim that $f : X \to Y$ is complete if and only if for every bounded measurable $\beta : Y \to \R$,
	\beq
		\label{bounded_complete}
		\int_{y \in Y} \beta(y) \, f(dy|x) = 0 \quad \forall x \qquad \Longrightarrow \qquad \beta(y) = 0 \quad f(-|x)\as \: \forall x.
	\eeq
	This is essentially the standard definition of \emph{bounded complete}.

	To prove the equivalence, suppose first that $f$ is complete in our sense. Then for given $\beta$ satisfying the vanishing integral assumption, suppose $\beta : Y \to [0,1]$ without loss of generality by rescaling. Then with the power set $\sigma$-algebra on $\{0,1\}$, we have a morphism in $\Stoch$ given by
	\[
		g : Y \to \{0,1\}, \qquad g(\{1\}|y) := \beta(y),
	\]
	where all other values of $g$ are clearly uniquely specified and the measurability condition on $g$ holds as $\beta$ is measurable. We now have the composite morphism
	\[
		gf : X \to \{0,1\} , \qquad (gf)(\{1\}|x) = \int_{y\in Y} \beta(y) \, f(dy|x) = 0,
	\]
	which coincides with $hf : X \to \{0,1\}$ for $h : Y \to \{0,1\}$ the deterministic constant zero morphism, $h(\{1\}|y) = 0$ for all $y$. Therefore the completeness assumption on $f$ implies $g =_{f\as} h$. Using \cref{stoch_as_eq} with $T = \{1\}$ only, this is equivalent to
	\[
		\int_{y\in S} \beta(y) \, f(dy|x) = \int_{y\in S} 0 \, f(dy|x) = 0
	\]
	for all $S \in \Sigma_Y$. This implies that $\beta(y)$ indeed vanishes $f(-|x)$-almost surely for every $x$, proving~\cref{bounded_complete}.

	Conversely, suppose that $f$ is bounded complete in the sense of \cref{bounded_complete}. For given $g, h : Y \to Z$ with $gf = hf$, we will prove that
	\beq
		\label{equal_integrals}
		\int_{y\in S} g(T|y) \, f(dy|x) = \int_{y\in S} h(T|y) \, f(dy|x)
	\eeq
	for all $x \in X$ and $S \in \Sigma_Y$ and $T \in \Sigma_Z$, which is what needs to be shown by~\cref{stoch_as_eq}. For fixed $S$ and $T$, consider the function
	\[
		\beta : Y \longrightarrow [0,1], \qquad y \longmapsto g(T|y) - h(T|y) ,
	\]
	which is bounded measurable by construction. Now the integral of $\beta$ with respect to every $f(-|dx)$ vanishes by the assumption that $gf = hf$, which amounts exactly to equality of the integrals for $S = Y$. Therefore $\beta$ itself vanishes $f(-|x)$-almost surely for every $x$ thanks to the assumption~\cref{bounded_complete}. This immediately implies~\cref{equal_integrals}.
\end{ex}

We have therefore established that our abstract notion of completeness instantiates concretely to the one which is most relevant for measure-theoretic probability. Returning to the general theory, a simple first observation about completeness is as follows.

\begin{lem}
	Every deterministic morphism is complete.
\end{lem}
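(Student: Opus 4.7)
The plan is to unpack the two definitions and show that determinism lets us \emph{pull the copy past} $f$, reducing $f\as$ equality to ordinary composition equality.

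First I would spell out what $g =_{f\as} h$ means for $f : X \to Y$ and $g, h : Y \to Z$: by \Cref{defn_as}, this is the equation
\[
	(g \otimes \id_Y) \circ \cop_Y \circ f \;=\; (h \otimes \id_Y) \circ \cop_Y \circ f
\]
as morphisms $X \to Z \otimes Y$. So assuming $gf = hf$, I need to produce this equality.

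Next, since $f$ is deterministic, \Cref{defn_det} gives $\cop_Y \circ f = (f \otimes f) \circ \cop_X$. Substituting this on both sides of the displayed equation, the task reduces to showing
\[
	(g \otimes \id_Y) \circ (f \otimes f) \circ \cop_X \;=\; (h \otimes \id_Y) \circ (f \otimes f) \circ \cop_X,
\]
i.e.\ $(gf \otimes f) \circ \cop_X = (hf \otimes f) \circ \cop_X$. This is immediate from the assumption $gf = hf$ together with bifunctoriality of $\otimes$.

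There is no real obstacle here; the argument is one line of string diagram manipulation, and the only thing to take care of is to write the proof in a way that makes it clear we are applying the \Cref{defn_det} equation to the $\cop_Y \circ f$ configuration that appears inside the definition of $=_{f\as}$. A graphical version would simply show $f$ being pushed through the copy dot on the $Y$-wire, after which the two sides are visibly equal under $gf = hf$.
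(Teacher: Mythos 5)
Your proposal is correct and is essentially the paper's own proof: the paper's string-diagram computation is exactly the chain $(g\otimes\id)\circ\cop_Y\circ f = (gf\otimes f)\circ\cop_X = (hf\otimes f)\circ\cop_X = (h\otimes\id)\circ\cop_Y\circ f$, using determinism of $f$ on the outer steps and $gf=hf$ in the middle. No gaps.
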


\begin{proof}
	Assuming $gf = hf$ for deterministic $f$, we compute
	\[
		\input{complete_deterministic_proof.tikz}
	\]
	which means exactly $g =_{f\as} h$.
\end{proof}

We have not yet studied other properties of complete morphisms, such as their compositionality. A simple use of completeness is to provide a simplified criterion for sufficiency.

\begin{prop}
	Suppose that $\C$ is strictly positive (\Cref{spositive_defn}). Let $p : \Theta \to X$ be a statistical model and $s : X \to V$ a statistic such that $sp$ is complete. Then $s$ is sufficient if and only if there is $\alpha : V \to X$ such that $\alpha s p = p$.
\end{prop}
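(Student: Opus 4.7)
The plan is to use \Cref{thm_FN} as a black box, reducing the proposition to showing that the condition $s\alpha =_{sp\as} \id_V$ in that theorem becomes automatic in the presence of completeness. The ``only if'' direction is immediate: if $s$ is sufficient, then \Cref{thm_FN} produces a witness $\alpha$ with $\alpha s p = p$ (as well as $s\alpha =_{sp\as} \id_V$, which we do not even need here).

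For the ``if'' direction, suppose $\alpha : V \to X$ is given with $\alpha s p = p$. First I would compose with $s$ on the left of both sides of this equation to obtain
\[
	(s\alpha)(sp) \;=\; s(\alpha s p) \;=\; sp \;=\; \id_V \circ (sp).
\]
This exhibits two parallel morphisms $s\alpha,\, \id_V : V \to V$ whose composites with $sp : \Theta \to V$ agree. Since $sp$ is complete by hypothesis, \Cref{defn_complete} then yields
\[
	s\alpha \;=_{sp\as}\; \id_V.
\]
At this point both of the conditions needed for \Cref{thm_FN} are in place, and strict positivity of $\C$ lets us apply the ``if'' direction of that theorem to conclude that $\alpha$ is a sufficiency witness for $s$, so $s$ is sufficient.

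There is no real obstacle here; the whole content of the argument is the observation that completeness of $sp$ is precisely what is needed to upgrade the marginal condition $\alpha s p = p$ into the splitting condition $s\alpha =_{sp\as} \id_V$ appearing in the hypotheses of \Cref{thm_FN}. The strict positivity assumption is used only indirectly, via the appeal to \Cref{thm_FN}.
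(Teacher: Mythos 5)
Your argument is correct and is essentially identical to the paper's own proof: both reduce to \Cref{thm_FN} and observe that composing $\alpha s p = p$ with $s$ gives $s\alpha sp = sp$, whence completeness of $sp$ yields $s\alpha =_{sp\as} \id_V$. Nothing to add.
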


\begin{proof}
	Applying \Cref{thm_FN}, the condition $s\alpha =_{sp\as} \id$ follows from $s\alpha sp = sp$ by the completeness assumption.
\end{proof}

The other missing ingredient for Basu's theorem is the notion of \emph{ancillary} statistic, which formalizes the idea that a statistic carries no information at all about the parameters of a statistical model.

\begin{defn}
	\label{defn_ancillary}
	A statistic $s : X \to V$ for a statistical model $p : \Theta \to X$ is \emph{ancillary} if $sp$ displays $\condindmarkov{\Theta}{T}{I}$, which means that
	\[
		\input{ancillary.tikz}
	\]
	for suitable $\psi : I \to V$.
\end{defn}

Here is our version of Basu's theorem:

\begin{thm}
	\label{thm_basu}
	Suppose that $p : \Theta \to X$ is a statistical model, and that $s : X \to V$ and $a : X \to W$ are statistics. If $s$ is sufficient, the composite morphism $sp$ is complete, and $a$ is ancillary, then the morphism
	\beq
		\label{basu_morphism}
		\input{basu_conclusion.tikz}
	\eeq
	displays $\condindproc{V}{W}{\Theta}$.
\end{thm}

The morphism~\cref{basu_morphism} represents the joint distribution of the statistics $s$ and $a$, given the model parameter.

\begin{proof}
	Let $\beta : V \to X$ be a splitting of $s$ which witnesses its sufficiency. Then we have
	\[
		\input{basu_proof1.tikz}
	\]
	for suitable $\psi : I \to U$, where the first step is by the sufficiency of $s$ and the second since $a$ is ancillary. Using that $sp$ is complete, we obtain
	\[
		\input{basu_proof2.tikz}
	\]
	where the first step is by sufficiency of $s$ and the second by completeness of $sp$ and the previous equation. This factorization implies in particular the desired conditional independence $\condindproc{V}{W}{\Theta}$.
\end{proof}

\begin{rem}
	Thanks to the characterization of completeness as bounded completeness in $\Stoch$ (\Cref{stoch_complete}), \Cref{thm_basu} reproduces essentially the standard version of Basu's theorem in the case of $\Stoch$. 
\end{rem}

\section{Minimal sufficient statistics and Bahadur's theorem}
\label{sec_bahadur}

While sufficient statistics are useful in that they contain maximal information about the parameters, in practice one also wants a statistic to \emph{discard as much irrelevant information} as possible. This is formalized by the notion of \emph{minimal} sufficient statistic, which this section develops within our framework. We will prove Bahadur's theorem, a classical result giving a sufficient criterion for the minimality of a sufficient statistic in terms of completeness.

Leading up to the definition of minimality, we first investigate how different statistics may determine each other:

\begin{defn}
	\label{stat_preorder}
	Let $s : X \to V$ and $t : X \to W$ be statistics for a statistical model $p : \Theta \to X$. Then we write $t \leq s$ if there is $c : V \to W$ such that $t =_{p\as} cs$.
\end{defn}

Such a $c$ must be $sp$-a.s.~deterministic by \Cref{gfp_det}, since $s$ is deterministic and $cs$ is $p$-a.s.~deterministic by \Cref{lem_asequal_asdeterministic}.

It is easy to see that the $\leq$-relation on statistics is reflexive and transitive. In this way, the collection of statistics for a given statistical model forms a preordered set\footnote{Or, depending on how large the category $\C$ is, a proper class equipped with a preorder. We will ignore these size issues here, for which Grothendieck universes provide the standard working solution.}, with the more informative statistics further up. This preorder is reminiscent of the standard theory of subobjects in category theory~\cite[Section~1.3]{MM}, but with the direction of arrows reversed. The idea of defining an informativeness ordering of this type in categorical terms is certainly not new and appears e.g.~also in the work of Golubtsov~\cite[Section~5]{golubtsov2}, who has used it to develop aspects of decision theory in purely categorical terms~\cite[Sections~6 and~7]{golubtsov2}.

We now restrict to the set of sufficient statistics with the induced preorder. Recall that a \emph{least element} $s$ in a preordered set is an element such that $s \leq r$ for all other elements $r$.

\begin{defn}
	\label{min_suff}
	A statistic $s : X \to T$ is \emph{minimal sufficient} if it is a least element in the preordered set of sufficient statistics for the given model $p : \Theta \to X$.
\end{defn}

Since any two least elements are ordered in both directions, a minimal sufficient statistic is essentially unique: any two minimal sufficient statistics can be computed from each other in the sense of \Cref{stat_preorder}.

Here is our version of Bahadur's theorem, showing that under mild assumptions, a complete sufficient statistic is guaranteed to be minimal.

\begin{thm}
	Suppose that $\C$ is strictly positive. Let $p : \Theta \to X$ be a statistical model which has a minimal sufficient statistic. If $s : X \to V$ is a complete sufficient statistic for $p$, then $s$ is minimal sufficient.
\end{thm}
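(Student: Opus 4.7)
The plan is to exploit the hypothesized minimal sufficient statistic $m : X \to M$ as a stepping stone. Fix sufficiency witnesses $\beta : M \to X$ for $m$ (so $\beta m p = p$) and $\alpha : V \to X$ for $s$, as in \Cref{suff_defn}. By minimality of $m$ applied to the sufficient statistic $s$, there exists $c : V \to M$ with $m =_{p\as} cs$. It will suffice to construct $d : M \to V$ with $s =_{p\as} dm$: then for any sufficient $r : X \to R$, minimality of $m$ produces $c_r : R \to M$ with $m =_{p\as} c_r r$, and post-composing by $d$ via \Cref{as_compose} together with transitivity yields $s =_{p\as} d c_r r$, giving $s \leq r$ and hence that $s$ is minimal sufficient.

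Imitating the classical Bahadur argument, where the analogue of $\mathbb{E}[s \mid m]$ plays the key role, the plan is to take $d := s\beta : M \to V$ and set $k := dc = s\beta c : V \to V$. Marginalizing the second output of $m =_{p\as} cs$ gives $mp = csp$; combining with $\beta m p = p$ produces $ksp = s\beta c s p = s\beta m p = sp$. Completeness of $sp$ then forces $k =_{sp\as} \id_V$. Post-composing $m =_{p\as} cs$ with $d$ also yields $dm =_{p\as} dcs = ks$ by \Cref{as_compose}, so the remaining task is to prove $ks =_{p\as} s$.

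The last step is the delicate one, since $k =_{sp\as} \id_V$ is an a.s.\ equality of morphisms out of $V$ relative to the pushforward $sp$, whereas the target $ks =_{p\as} s$ is an a.s.\ equality of morphisms out of the larger object $X$ relative to $p$; this implication is not automatic in general. The plan is to bridge the gap using the sufficiency of $s$ itself. Swapping the two outputs of the sufficiency equation $(\id_X \otimes s)\cop_X p = (\alpha \otimes \id_V)\cop_V sp$ and using commutativity of $\cop$ yields $(s \otimes \id_X)\cop_X p = (\id_V \otimes \alpha)\cop_V sp$. Applying $(k \otimes \id_X)$ to both sides, factoring $(k \otimes \alpha) = (\id_V \otimes \alpha)(k \otimes \id_V)$ by bifunctoriality, and using $k =_{sp\as} \id_V$ to collapse $(k \otimes \id_V)\cop_V sp$ back to $\cop_V sp$, produces the chain
\[
(ks \otimes \id_X)\cop_X p = (\id_V \otimes \alpha)(k \otimes \id_V)\cop_V sp = (\id_V \otimes \alpha)\cop_V sp = (s \otimes \id_X)\cop_X p,
\]
which is exactly $ks =_{p\as} s$, completing the proof. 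Strict positivity does not appear to enter the argument directly; it is used in the remark after \Cref{stat_preorder} to guarantee that any witness $c$ of the preorder is automatically a.s.\ deterministic, but this fact is not required here.
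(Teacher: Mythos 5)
Your proof is correct and follows essentially the same route as the paper: the key step in both is to show that $k = s\beta c$ is $sp$-a.s.\ equal to $\id_V$ via completeness of $sp$, and then to transport this along the (swapped) sufficiency equation for $s$ to obtain $s\beta c\, s =_{p\as} s$, which is exactly the paper's closing string-diagram computation. The only cosmetic differences are that you apply completeness directly to $ksp = sp$ rather than first deriving $\alpha =_{sp\as} \beta c$ and postcomposing with $s$, and that you take $\beta$ to witness sufficiency of the minimal statistic $m$ itself rather than of the a.s.-equal composite $cs$; your remark that strict positivity never actually enters also matches the paper's argument, which only uses the easy direction of \Cref{thm_FN}.
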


\begin{proof}
	If there is a minimal sufficient statistic, then $s$ must lie above it. By \Cref{stat_preorder}, this means that there is some $c : V \to W$ such that $cs$ is minimal. In order to show that $s$ itself is minimal, we therefore only need to show that $s \leq cs$. Since $cs$ is still sufficient, we have a sufficiency witness $\beta : W \to X$ for $cs$, satisfying $cs \beta =_{csp\as} \id$ and $p = \beta csp$ by \Cref{thm_FN}.
	
	Since $s$ is sufficient as well, we have a witness $\alpha : V \to X$ with $s \alpha =_{sp\as} \id$ and $p = \alpha s p$. Hence $\alpha s p = \beta c s p$, and completeness of $s p$ now implies $\alpha =_{sp\as} \beta c$, and therefore $s \beta c =_{sp\as} \id$ by postcomposing with $s$. We now claim that $s =_{p\as} s \beta c s$. Indeed,
	\[
		\input{bahadur_proof2.tikz}
	\]
	using sufficiency of $s$ in the first and third step. This proves that $s \leq cs$, as was to be shown.
\end{proof}

We end with two possible avenues for further research.

\begin{prob}
	Under which conditions on $\C$ does every statistical model in $\C$ have a minimal sufficient statistic?
\end{prob}

We suspect that it is possible to derive the existence of minimal sufficient statistics from the assumption that $\C$ has all colimits and that these colimits interact suitably nicely with the Markov category structure, in a precise way which still needs to be determined. For then it may be possible to obtain a minimal sufficient statistic by taking the colimit of the diagram consisting of all sufficient statistics out of $X$ for a given statistical model $p : \Theta \to X$. (A suitable co-well-poweredness assumption may be relevant for this as well.)

\begin{rem}
	Following Lauritzen~\cite{lauritzen_book2}, another interesting avenue may be to consider the \emph{inverse problem} of finding all statistical models $p : \Theta \to X$ for which a given deterministic morphism $s : X \to V$ is a sufficient statistic.
\end{rem}

\newpage

\newgeometry{left=3cm,right=3cm,bottom=4.1cm,top=4.1cm}

\bibliographystyle{plain}
\bibliography{markov_categories}

\end{document}